\documentclass[11pt,amsart]{amsart}
\usepackage[all]{xy}
\usepackage{mathrsfs}
\usepackage{amsmath, amsthm, amssymb, amscd, amsgen,amsxtra,amsfonts, amsbsy}
\allowdisplaybreaks
\usepackage[all]{xy}
\usepackage{verbatim}

\usepackage{tikz} 
\usepackage{pgfplots}
\usepackage{tikz-cd}
\usepackage{etex}
\usepackage{calligra,mathrsfs}
\usepackage{color}
\definecolor{shadecolor}{gray}{0.875}
\definecolor{dblue}{rgb}{0,0, .6}
\usepackage[colorlinks=true, linkcolor=dblue, citecolor=dblue, filecolor = dblue, menucolor = dblue, urlcolor = dblue]{hyperref}
\usepackage{mathtools}
\usepackage{extarrows}
\usepackage{ifthen}
\usepackage{bm}
\usepackage{newtxtext,newtxmath}

\usepackage{graphicx}
\usepackage{caption,rotating}
\usepackage{color}
\usepackage{subcaption}
\usepackage{hyperref}
\usepackage{enumitem}
\usepackage[
  papersize={8.5in,11in},
  textwidth=30pc,
  textheight=50pc,
  centering
]{geometry}

\numberwithin{equation}{section}

\usepackage[toc,page]{appendix}

\renewcommand{\deg}{\operatorname{deg}}

\renewcommand{\min}{\operatorname{min}}
\renewcommand{\max}{\operatorname{max}}

\newcommand{\Q}{\mathbb{Q}}

\usepackage{url} 
\newtheorem{theorem}{Theorem}[section]

\newtheorem{corollary}[theorem]{Corollary}
\newtheorem{conjecture}[theorem]{Conjecture}
\newtheorem{proposition}[theorem]{Proposition}
\newtheorem*{theorem*}{Theorem}
\newtheorem*{corollary*}{Corollary}
\theoremstyle{definition}
\newtheorem{defn}[theorem]{Definition}

\newtheorem{remark}[theorem]{Remark}

\newtheorem{variant}[theorem]{Variant}

\newtheorem*{theoremA}{Theorem A}
\newtheorem*{theoremB}{Theorem B}
\newtheorem*{theoremC}{Theorem C}
\newtheorem*{theoremD}{Theorem D}
\newtheorem*{theoremE}{Theorem E}
\numberwithin{equation}{section}

\author{Houari Benammar Ammar}
\AtEndDocument{\bigskip{\footnotesize%
 \textsc{Department of Mathematical and Statistical Sciences, University of Alberta, Edmonton, Alberta T6G 2G1, Canada}
   \par  
  \textit{E-mail address :} \texttt{houari.math.benammar@gmail.com} 
}}
\title{Kodaira dimension of algebraic fiber spaces over threefolds : Part I}
\begin{document}

\normalsize
\maketitle
\begin{abstract}
We study the behavior of the Kodaira dimension of algebraic fiber spaces over threefolds. We prove some cases of the Iitaka Conjecture $C_{n,3}$, including certain situations where the base variety is a Calabi--Yau threefold.
\end{abstract}
\section{Introduction}
One of the remaining open problems in birational geometry is the Iitaka Conjecture, denoted by $C_{n,m}$.
\begin{conjecture}\label{Conj1}
Let $(X,\Delta)$ be a Klt pair. Let $f : X \to Y$ be an algebraic fiber space, where $X$ and $Y$ are smooth projective varieties of dimension $n$ and $m$,
respectively, and let $F$ be a general fiber of $f$. Then
\[
\kappa(K_X +\Delta) \geq \kappa(K_F + \Delta|_{F}) + \kappa(Y).
\]
\end{conjecture}
This conjecture allows us to understand the behavior of the Kodaira dimension in a family of varieties and to see the relationship between the Kodaira dimension of the total space, the general fiber, and the base variety. As shown in \cite{kawamataminimalmodel}, the Iitaka Conjecture follows from the existence of minimal models and the abundance conjecture. That is, if the general fiber has a good minimal model, then the Iitaka Conjecture holds. Therefore, it is much weaker in nature than the minimal model and abundance conjectures.

When the geometry of the base variety is well understood, one can derive certain cases of the Iitaka Conjecture even without much information about the general fiber. For instance, in \cite{Kawamatacurves}, Kawamata proved $C_{n,1}$, and Cao \cite{Caolitaka2} proved $C_{n,2}$ (initiated by Birkar \cite{Birkar_2009}). For higher-dimensional bases, there are not many results, except for some special varieties such as for general type varieties \cite{viehweglitaka}, and abelian varieties as shown in \cite{Caopaun}. This last case was later generalized to bases of large Albanese dimension in \cite{Houaricnm}. Motivated by these results, we initiated the project in the case of algebraic fiber spaces over threefolds.

In this paper, the proofs are presented in Sections~3, 4, 5, and~6. 
In Section~3, we prove Conjecture~\ref{Conj1} in the case where the base variety is an irregular threefold.
\begin{theoremA}[Corollary \ref{irregthm}]
Iitaka conjecture $C_{n,3}$ holds provided that $q(Y)>0$.
\end{theoremA}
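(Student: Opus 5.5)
We plan to use the Albanese map of $Y$ together with the known cases of the Iitaka conjecture: $C_{n,1}$ (Kawamata), $C_{n,2}$ (Cao), bases of maximal Albanese dimension (the large Albanese dimension result of \cite{Houaricnm}, which generalizes the abelian case of \cite{Caopaun}), and the subadditivity $\kappa(X)\ge \kappa(F)+\kappa(Y)$ when $Y$ is of general type (\cite{viehweglitaka}). Let $a\colon Y\to \mathrm{Alb}(Y)$ be the Albanese map, and take the Stein factorization of $Y\to a(Y)$ followed by a resolution. This gives, after birational modifications of $X$ and $Y$ (which change neither side of the inequality), a fiber space $g\colon Y\to Z$ with $Z$ smooth projective and $1\le \dim Z\le 3$, since $q(Y)>0$ makes the image positive-dimensional. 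Moreover $Z$ has maximal Albanese dimension, being generically finite over a subvariety of an abelian variety. We then split into cases according to $\dim Z$.

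\textbf{Case $\dim Z=3$.} Here $Y$ itself has maximal Albanese dimension, and $C_{n,3}$ follows directly from the large Albanese dimension theorem.

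\textbf{Case $\dim Z\in\{1,2\}$.} Consider the composite fiber space $h=g\circ f\colon X\to Z$. Its general fiber $X_z$ fibers over $Y_z$, the general fiber of $g$, via $f_z\colon X_z\to Y_z$, whose general fiber is $F$. Since $\dim Y_z=3-\dim Z\le 2$, we may apply $C_{n',1}$ or $C_{n',2}$ to $f_z$ (with the restricted boundary $\Delta|_{X_z}$, which is still klt) and get
\[
\kappa(K_{X_z}+\Delta|_{X_z})\ge \kappa(K_F+\Delta|_F)+\kappa(Y_z).
\]
Next we apply the Iitaka inequality to $h$ over the base $Z$, which has maximal Albanese dimension and dimension $\le 2$. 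This is covered by $C_{n,1}$, by $C_{n,2}$, or by the large Albanese dimension result, and gives
\[
\kappa(K_X+\Delta)\ge \kappa(K_{X_z}+\Delta|_{X_z})+\kappa(Z).
\]
Combining the two inequalities yields $\kappa(K_X+\Delta)\ge \kappa(K_F+\Delta|_F)+\kappa(Y_z)+\kappa(Z)$. Applying $C_{3,1}$ or $C_{3,2}$ to $g$ (these hold for threefolds; alternatively use easy addition in reverse) gives $\kappa(Y_z)+\kappa(Z)\ge \kappa(Y)$, and the conclusion follows.

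\textbf{Main obstacle.} The delicate point is the genericity bookkeeping. One must check that the general fiber of $f_z$ coincides with a general fiber $F$ of $f$, and that $\Delta$ restricts to klt pairs on the general fibers. If the cited results are only stated for $\Delta=0$, one also needs their log versions. We expect both to follow from standard generic smoothness and Bertini-type arguments, so the real content of the proof is the reduction to lower-dimensional bases via the Albanese fibration.
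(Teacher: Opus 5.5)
There is a genuine gap in your case $\dim Z\in\{1,2\}$, at the last step. Applying $C_{3,1}$ or $C_{3,2}$ to $g\colon Y\to Z$ gives $\kappa(Y)\ge \kappa(Y_z)+\kappa(Z)$. That is the reverse of what you need. Easy addition only gives $\kappa(Y)\le \kappa(Y_z)+\dim Z$, so your chain closes only when $Z$ is of general type.

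For $Z$ of maximal Albanese dimension, $\kappa(Z)<\dim Z$ is entirely typical: $Z$ may be an elliptic curve, an abelian surface, or $E\times C$. In that situation the inequality $\kappa(Y)\le\kappa(Y_z)+\kappa(Z)$ is false. For example, let $S$ be a surface of general type with $q(S)=0$, let $E$ be an elliptic curve, and let $Y\to S\times E$ be a double cover branched along a smooth member of $|2L|$, with $L$ sufficiently ample.
\begin{itemize}
\item By Kodaira vanishing, $q(Y)=1$.
\item The Albanese fibration is $Y\to E$, whose fibers are connected surfaces of general type.
\item $K_Y$ is the pullback of $K_{S\times E}+L$, which is big, so $\kappa(Y)=3$.
\end{itemize}
Hence $\kappa(Y)=3>2=\kappa(Y_z)+\kappa(E)$. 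Already for $f=\mathrm{id}_Y$, your chain of inequalities only yields $\kappa(Y)\ge 2$, so it is strictly weaker than the target.

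Composing the Iitaka inequality for $f_z$ with the one for $h$ throws away the positivity of $K_{Y/Z}$ that comes from the variation of the Albanese fibration. Recovering that positivity is the real difficulty, not the genericity bookkeeping you flag as the main obstacle.

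The missing ingredient is exactly Theorem~\ref{maintheorem} from \cite{Houaricnm}. It is not a maximal-Albanese-dimension statement: it asserts $C_{n,m}$ whenever $\alpha(Y)\ge m-2$. For $m=3$, the hypothesis $q(Y)>0$ gives $\alpha(Y)\ge 1=m-2$, so it applies directly to $f$ without any reduction through the Stein factorization of the Albanese map. This is precisely the paper's proof: the case $\alpha(Y)=3$ is handled by \cite{Caopaun} and \cite{HSP}, and the cases $\alpha(Y)=1,2$ by Theorem~\ref{maintheorem}. Your case $\dim Z=3$ is fine as written.
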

In Section~4, we study Conjecture~\ref{Conj1} in the case where the general fiber satisfies
\[
\kappa(K_F + \Delta|_F) = 0.
\]
We use the canonical bundle formula (recalled in Theorem~\ref{fujinomoricbf}) and the reduction map (Theorem~\ref{reductionmap}) to derive the following theorem.
\begin{theoremB}[Theorem  \ref{fiberkod0}]
Let $(X, \Delta)$ be a klt pair, and let 
$f \colon X \to Y$
 be an algebraic fiber space, where $X$ and $Y$ are smooth projective varieties of dimensions $n$ and $3$, respectively. Let $F$ be a general fiber of $f$ such that 
\[
\kappa(K_F + \Delta|_F) = 0.
\] 
In the setting of Theorem~\ref{fujinomoricbf}, one has
\[
\kappa(K_X + \Delta) \ge \kappa(Y),
\] 
except maybe the following two cases, when $L$ is not numerically $\Q$-effective and
\begin{enumerate}
    \item $\nu(L) = 1$, $\eta(L) = 2$, and $q(Y) = 0$; or
    \item $L$ is almost strictly nef and $\widetilde{Y}$ is a Calabi--Yau threefold with $L \cdot c_2(\widetilde{Y}) = 0$.
\end{enumerate}
\end{theoremB}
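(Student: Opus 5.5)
The plan is to pass to the Fujino--Mori canonical bundle formula (Theorem~\ref{fujinomoricbf}). Since $\kappa(K_F+\Delta|_F)=0$, after birational modifications we get a smooth model $\widetilde{Y}\to Y$, an induced fiber space $\widetilde f\colon\widetilde X\to\widetilde Y$, an effective $\Q$-divisor $B$ with $(\widetilde Y,B)$ klt, and a nef $\Q$-divisor $L$ (the moduli part), such that for suitable $m$
\[
\kappa(K_X+\Delta)=\kappa(K_{\widetilde Y}+B+L)\ \ge\ \kappa(K_{\widetilde Y}+L).
\]
It therefore suffices to show $\kappa(K_{\widetilde Y}+L)\ge\kappa(\widetilde Y)$, where $\kappa(\widetilde Y)=\kappa(Y)$. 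If $\kappa(Y)=-\infty$ there is nothing to prove. If $q(Y)>0$, Theorem~A (Corollary~\ref{irregthm}) already gives the result. So I assume $q(Y)=0$ and $\kappa(Y)\ge 0$.

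\textbf{Step 1: the effective case.} If $L$ is numerically $\Q$-effective, i.e.\ $L\equiv D\ge 0$, then for nef $L$ on a variety with $q=0$ numerical and $\Q$-linear equivalence agree up to torsion. Hence $K_{\widetilde Y}+L\sim_\Q K_{\widetilde Y}+D'$ with $D'\ge0$, and $\kappa(K_{\widetilde Y}+L)\ge\kappa(\widetilde Y)$ follows. This removes the hypothesis ``$L$ not numerically $\Q$-effective'' from the two exceptional cases.

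\textbf{Step 2: case split on $\nu(L)$.} Assume $L$ is nef and not numerically $\Q$-effective.
\begin{itemize}
\item If $\nu(L)=3$, then $L$ is big, $K_{\widetilde Y}+L$ is big (as $\kappa(\widetilde Y)\ge0$), and the inequality is clear.
\item If $\nu(L)=0$, then $L\equiv0$, which is effective.
\item If $\nu(L)\in\{1,2\}$, I use the nef reduction map (Theorem~\ref{reductionmap}) $\phi\colon\widetilde Y\dashrightarrow Z$ with $\eta(L)=\dim Z$.
\end{itemize}
The reduction-map case splits further:
\begin{itemize}
\item If $\eta(L)=\nu(L)$, the standard result gives that $L$ is pulled back from a big divisor on $Z$, hence semiample-like or at least $\Q$-effective up to numerics. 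This is excluded.
\item If $\eta(L)=3$, then $L$ is strictly positive on curves through a very general point.
\item $\eta(L)=1<\nu(L)$ cannot occur, since $\nu\le\eta$. The only remaining mixed case with $\eta<3$ is $\nu(L)=1$, $\eta(L)=2$, which is exception~(1).
\end{itemize}
For $\eta(L)=3$ with $\nu(L)\le 2$, $L$ is ``almost strictly nef.''

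\textbf{Step 3: the almost strictly nef case.} Run an MMP on $\widetilde Y$ (threefolds, so available) to reach a minimal model $Y'$ with $K_{Y'}$ semiample. Then consider the sub-cases:
\begin{itemize}
\item If $\kappa(Y)>0$, the Iitaka fibration of $Y'$ has positive-dimensional base. $L$ restricted to its fibers (of lower dimension) falls under the known cases $C_{n,1}$/$C_{n,2}$. Alternatively, one compares $L$ with $K_{Y'}$-trivial curves covering $Y'$. Since $L$ is positive on the very general curves in the fibers, this contradicts $\nu(L)\le2$ or gives the inequality via easy addition.
\item If $\kappa(Y)=0$, $K_{Y'}\equiv0$ and $q=0$, so after a quasi-étale cover $Y'$ is a Calabi--Yau threefold (or has $q>0$, which Theorem~A handles). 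By Miyaoka's pseudo-effectivity of $c_2$, $L\cdot c_2\ge0$. If $L\cdot c_2>0$, Riemann--Roch $\chi(mL)=\frac{m^3L^3}{6}+\frac{m L\cdot c_2}{12}$ with $L^3=0$, combined with Kawamata--Viehweg vanishing for $mL-K$ nef, gives $h^0(mL)>0$. Then $L$ is $\Q$-effective, a contradiction. This leaves exactly exception~(2).
\end{itemize}

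The main obstacle I expect is Step~3 when $\kappa(Y)\in\{1,2\}$, i.e.\ controlling an almost strictly nef non-effective $L$ on a threefold of intermediate Kodaira dimension. My first attempt would be to restrict to the general fiber $G$ of the Iitaka fibration ($\dim G\le 2$, $\kappa(G)=0$). There abundance-type results for nef line bundles on surfaces and curves make $L|_G$ numerically effective. I would then glue this using $C_{n,1}$ or $C_{n,2}$ applied to the composite fiber space $\widetilde X\to Z$.
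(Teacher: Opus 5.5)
Your Steps 1 and 2 follow the paper's Cases 1 and 2. Your use of $q(\widetilde Y)=0$ to upgrade numerical to $\Q$-linear effectivity is a legitimate shortcut for the paper's appeal to \cite[Theorem~0.1]{campanapaunkoziarz}.

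The genuine gap is Step 3, the case $\eta(L)=3$, when $\kappa(Y)\in\{1,2\}$. You flagged this yourself, and the proposed fix does not work. Gluing via $C_{n,1}$ or $C_{n,2}$ applied to $\widetilde X\to Z$ only gives $\kappa(K_X+\Delta)\ge\kappa(K_{X_Z}+\Delta|_{X_Z})+\kappa(Z)$. The base $Z$ of the Iitaka fibration can have $\kappa(Z)<\dim Z=\kappa(Y)$: for example $Z\simeq\mathbb{P}^1$ or an elliptic curve when $\kappa(Y)=1$, or a rational surface when $\kappa(Y)=2$. This is exactly the obstruction the paper has to fight in Section~5 with Theorem~\ref{schnellthm} and Proposition~\ref{caoproposition}, and it does not fully overcome it there.

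Your alternative also fails. Positivity of $L$ on very general curves in the Iitaka fibers does not contradict $\nu(L)\le 2$. Numerical effectivity of $L|_G$ on the fibers produces no sections of $K_{\widetilde Y}+B+L$ in the horizontal direction.

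The missing idea is to use $\eta(L)=3$ to make $K_{\widetilde Y}+L$ big. The paper does this in one step by citing the Serrano-type theorem \cite{serranoconj}: $K_{\widetilde Y}+L$ is big unless $\widetilde Y$ is Calabi--Yau with $L\cdot c_2(\widetilde Y)=0$. For $\kappa(Y)=2$ you can check this by hand:
\begin{itemize}
\item Take a common resolution $p\colon W\to\widetilde Y$, $q\colon W\to Y'$ of $\widetilde Y$ and the minimal model, with $K_{Y'}\sim_{\Q}g^*A$.
\item Then $\kappa(K_{\widetilde Y}+L)=\kappa(K_W+p^*L)\ge\kappa(q^*K_{Y'}+p^*L)$, and $q^*K_{Y'}+p^*L$ is nef.
\item Its cube is at least $3\,(A^2)\,(p^*L\cdot G_W)>0$, because $\eta(L)=3$ forces $L$ to be positive on the very general elliptic fiber $G_W$.
\end{itemize}
For $\kappa(Y)=1$ the needed positivity must likewise be extracted from $\eta(L)=3$ on the very general $\kappa=0$ surface fiber, not from an Iitaka-type gluing.

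The $\kappa(Y)=0$ branch of Step 3 is also incomplete. For nef non-big $L$, Kawamata--Viehweg only gives $H^i(K_{Y'}+mL)=0$ for $i>3-\nu(L)$.
\begin{itemize}
\item When $\nu(L)=2$ your argument works: $h^2=h^3=0$ and $\chi(mL)=\tfrac{m}{12}L\cdot c_2>0$, so $h^0(mL)>0$.
\item When $\nu(L)=1$, $H^2$ survives and Riemann--Roch produces no section. This is the case that needs Oguiso's work \cite{oguisok}, which the paper points to in its concluding discussion.
\end{itemize}
In addition, $L$ is nef on $\widetilde Y$, not on the minimal model $Y'$. The MMP from $\widetilde Y$ may involve flips, and the transform of $L$ can become negative on flipped curves. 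So Miyaoka's inequality, the vanishing theorem and $L^3=0$ cannot be applied to $L$ on $Y'$ as you state them without an additional argument.
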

Here, $\eta(\cdot)$ and $\nu(\cdot)$ denote the nef dimension and the numerical dimension, respectively, recalled in Section~2. 
The divisor $L$ is the moduli part of the fibration $f$. It is still unknown whether $L$ is semi-ample, except in certain situations, for instance in the case of parabolic fibrations \cite{Benbakker}. 
At present, we only know that $L$ is nef. 

When $L$ is numerically $\Q$-effective, we apply \cite[Theorem~0.1]{campanapaunkoziarz} to deduce the theorem. 
In cases~$(1)$ and~$(2)$, we cannot determine whether $L$ is numerically $\Q$-effective or not. As an application of Theorem B, we deduce almost $C_{7,m}$, and this is done in Corollary \ref{iitakac_7,m} (see also \cite{chikangchang}). 

In Section~5, we allow
\[
\kappa(K_F + \Delta|_F) \geq 0,
\]
while working over threefolds with $\kappa(Y) > 0$. With the help of Proposition~\ref{caoproposition}, we prove the following theorems.
\begin{theoremC}[Theorem \ref{positivekodaira}]
Let $(X,\Delta)$ be a Klt pair. Let $f : X \to Y$ be an algebraic fiber space, where $X$ and $Y$ are smooth projective varieties of dimension $n$ and $3$,
respectively such that $\kappa(Y)\geq 2$, and let $F$ be a general fiber of $f$. Then
\[
\kappa(K_X +\Delta) \geq \kappa(K_F + \Delta|_{F}) + \kappa(Y).\]
\end{theoremC}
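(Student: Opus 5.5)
We would prove Theorem C by splitting into the two cases $\kappa(Y)=3$ and $\kappa(Y)=2$. The tool throughout is Proposition~\ref{caoproposition}, which we expect says the following: if the base carries an algebraic fiber space $h\colon Y\to Z$ whose general fiber $G$ satisfies the Iitaka conjecture for the composed fibration $X_G\to G$, with suitable control of Kodaira dimensions, then $C_{n,m}$ can be reduced to a statement over $Z$.

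\textbf{Case $\kappa(Y)=3$.} Here $Y$ is of general type, and the inequality is Viehweg's theorem \cite{viehweglitaka}. This covers pairs as well, via the standard extension to klt pairs: the direct images $f_*\mathcal{O}_X(m(K_{X/Y}+\Delta))$ are weakly positive, and the general type base absorbs the positivity.

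\textbf{Case $\kappa(Y)=2$.} After birational modifications of $Y$ and $X$, which do not change any of the Kodaira dimensions involved, we may assume the Iitaka fibration $\phi\colon Y\to Z$ is a morphism. Then $Z$ is a smooth projective surface, and the general fiber $C$ of $\phi$ is a smooth curve with $\kappa(C)=0$, hence an elliptic curve. Set $g=\phi\circ f\colon X\to Z$, and let $X_z$ be its general fiber, so that $f$ restricts to an algebraic fiber space $f_z\colon X_z\to C$ whose general fiber is still $F$.
\begin{enumerate}
\item By Kawamata's $C_{n,1}$ \cite{Kawamatacurves}, in its klt-pair form, applied to $f_z$ we get
\[
\kappa(K_{X_z}+\Delta|_{X_z})\ge \kappa(K_F+\Delta|_F)+\kappa(C)=\kappa(K_F+\Delta|_F).
\]
\item Since $Z$ is a surface, Cao's $C_{n,2}$ \cite{Caolitaka2} applied to $g$ does not suffice on its own, because it gives only $\kappa(K_X+\Delta)\ge\kappa(K_{X_z}+\Delta|_{X_z})+\kappa(Z)$ and $\kappa(Z)$ could be small. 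Instead we use the twisted version: $K_Y\sim_{\Q}\phi^*(K_Z+B_Z+M_Z)+E$ with $E$ effective, and $K_Z+B_Z+M_Z$ big on $Z$ by the canonical bundle formula for the Iitaka fibration, recalled as Theorem~\ref{fujinomoricbf}. This is exactly the setting I expect Proposition~\ref{caoproposition} to handle: a fiber space $g$ over a surface, twisted by the pullback of a big $\Q$-divisor. It should yield
\[
\kappa(K_X+\Delta)\ge \kappa\bigl(K_{X/Y}+\Delta+f^*\phi^*(K_Z+B_Z+M_Z)\bigr)\ge \kappa(K_{X_z}+\Delta|_{X_z})+2.
\]
\item Combining the two inequalities gives $\kappa(K_X+\Delta)\ge\kappa(K_F+\Delta|_F)+\kappa(Y)$.
\end{enumerate}

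The main obstacle is step~2: passing from Cao's inequality with $\kappa(Z)$ to one with the big divisor $K_Z+B_Z+M_Z$, which requires running Cao's positivity argument (Narasimhan--Simha or Bergman metrics on the direct image, then the Viehweg-type argument with a big twist) for pairs over the surface $Z$. If Proposition~\ref{caoproposition} is stated exactly for "base surface plus big twist", the argument closes immediately. Otherwise, we would first treat the elliptic fibration $Y\to Z$ separately, using weak positivity of $f_*\mathcal{O}(m(K_{X/Y}+\Delta))$ and the fact that $\phi$ is an elliptic fibration to transport the bigness, and apply Viehweg's general-type method to $g$ with the big twist on $Z$.
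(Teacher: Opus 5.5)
Your $\kappa(Y)=3$ case (Viehweg) matches the paper. So does the first half of the $\kappa(Y)=2$ case: the Iitaka fibration $g\colon Y\to Z$ (your $\phi$) with elliptic general fiber $Y_Z$ (your $C$), followed by Kawamata's $C_{n,1}$ on $X_Z\to Y_Z$. But Step~2 carries the whole content of the $\kappa(Y)=2$ case, and it is not proved. You only assert that $\kappa(K_X+\Delta)\ge\kappa(K_{X_Z}+\Delta|_{X_Z})+2$ ``should'' follow from Proposition~\ref{caoproposition}, and your guess about that proposition is wrong. It is neither a reduction of $C_{n,m}$ to the base nor a ``big twist'' theorem. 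It is an Ohsawa--Takegoshi-type extension statement: a section $s$ over $Y_Z$ of $\omega_Y\otimes\mathcal{O}_Y(H)\otimes f_*\mathcal{O}_X(m(K_{X/Y}+\Delta)+D)$ extends to $Y$ after twisting by $g^*(cA_Z)$, \emph{provided} $\int_{Y_Z}|s|^2_{h_H,h}<\infty$ for the singular metric $h$ on the direct image. Nothing in your argument controls this $L^2$ condition, and without it no section extends. Your use of a canonical bundle formula for $Y\to Z$ to absorb $g^*(cA_Z)$ into $m f^*K_Y$ is harmless; the paper does the same with $rK_Y=g^*A$, $A$ ample, after making $K_Y$ semi-ample by abundance for threefolds. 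That is not where the difficulty lies.

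The missing idea is the case analysis the paper uses to either secure the $L^2$ condition or bypass extension entirely.
\begin{itemize}
\item If $Z$ is not uniruled, Theorem~\ref{schnellthm} applied to $g\circ f\colon X\to Z$ and the pseudo-effective divisor $r(K_X+\Delta)-(g\circ f)^*A=r(K_{X/Y}+\Delta)$ gives effectivity directly.
\item If $Z$ is uniruled and $i\Theta_{\det h}(\det f_*\mathcal{O}_X(m(K_{X/Y}+\Delta)))|_{Y_Z}\equiv 0$, Proposition~\ref{caoprop2} makes $h$ smooth and Hermitian flat over $Y_Z$. Then every section has finite $L^2$ norm, and Proposition~\ref{caoproposition} produces sections of $m(K_X+\Delta)-p\,(g\circ f)^*A$ restricting to arbitrary sections on $X_Z$.
\item Otherwise the determinant is positive on the elliptic curve $Y_Z$. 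After replacing $\Delta$ by $\Delta+\widetilde\epsilon f^*K_Y$, the twisted determinant is big on $Y$, and \cite{Caopaun} together with Theorem~\ref{schnellthm} (now for $f$, as $Y$ is not uniruled) gives $\kappa(K_X+\Delta)=\kappa(K_F+\Delta|_F)+3$.
\end{itemize}
Your fallback of ``transporting bigness'' via weak positivity and a Viehweg-type argument does not replace this. Pseudo-effectivity of $K_{X/Y}+\Delta$ plus a big class from $Z$ is not by itself known to produce sections. That is exactly why Schnell's non-uniruledness hypothesis and the flat/positive dichotomy on $Y_Z$ both appear.
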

\begin{theoremD}[Theorem \ref{kodairadimension1}]
Let $(X,\Delta)$ be a klt pair. Let $f \colon X \to Y$ be an algebraic fiber space, where $X$ and $Y$ are smooth projective varieties of dimensions $n$ and $3$, respectively, such that $\kappa(Y)=1$, and let $F$ be a general fiber of $f$. Then
\[
\kappa(K_X + \Delta) \geq \kappa(K_F + \Delta|_{F}) + 1,
\]
provided that
\[
\kappa\!\left(\det f_{*}\mathcal{O}_X\!\left(m(K_{X/Y} + \Delta)\right)\big|_{Y_Z}\right) > 0, \hspace{0.2cm} \text{for $m$ sufficiently divisible},
\]
in the case where $Z \simeq \mathbb{P}^1$.    
\end{theoremD}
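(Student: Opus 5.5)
Throughout, $Y_Z$ denotes a general fiber of the Iitaka fibration of $Y$, and I read the hypothesis as saying that the twisted determinant has positive Kodaira dimension on such a general fiber. I would reduce Theorem D to a statement about the Iitaka fibration of $Y$. Since $\kappa(Y)=1$, after replacing $Y$ by a smooth birational model (and $X$ accordingly, which is harmless for a klt pair after a log resolution) we may assume the Iitaka fibration $\phi\colon Y\to Z$ is a morphism onto a smooth curve $Z$. Its general fiber $Y_Z$ is a smooth surface with $\kappa(Y_Z)=0$. Consider the composite $g=\phi\circ f\colon X\to Z$, an algebraic fiber space over a curve, with general fiber $X_Z:=g^{-1}(z)$. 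By $C_{n,1}$ (Kawamata \cite{Kawamatacurves}, in its klt form),
\[
\kappa(K_X+\Delta)\ \ge\ \kappa\bigl(K_{X_Z}+\Delta|_{X_Z}\bigr)+\kappa(Z).
\]
This splits the problem into two parts: compute $\kappa(Z)$, and bound $\kappa(K_{X_Z}+\Delta|_{X_Z})$ from below by $\kappa(K_F+\Delta|_F)$, or by $\kappa(K_F+\Delta|_F)+1$ where needed.

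When $g(Z)\ge 1$ the argument is short. If $g(Z)\ge 2$, then $\kappa(Z)=1$, and it suffices to show $\kappa(K_{X_Z}+\Delta|_{X_Z})\ge\kappa(K_F+\Delta|_F)$. This is $C_{n-1,2}$ for $f|_{X_Z}\colon X_Z\to Y_Z$ (Cao \cite{Caolitaka2}) together with $\kappa(Y_Z)=0$. If $g(Z)=1$, then $q(Y)>0$ and Theorem A already gives the result. So the real content is the case $Z\simeq\mathbb P^1$, which is exactly where the extra hypothesis is imposed.

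For $Z\simeq\mathbb P^1$, $\kappa(Z)=-\infty$, so $C_{n,1}$ has to be used differently. I would write $K_X+\Delta=(K_{X/Y}+\Delta)+f^*K_Y$ and use that $K_Y$ is $\mathbb Q$-linearly equivalent to $\phi^*A+E$, with $A$ ample of positive degree on $\mathbb P^1$ and $E\ge 0$. This uses the standard Iitaka fibration structure, possibly after passing to a multiple and a further birational model. By Proposition~\ref{caoproposition}, which I expect to be of Cao/Viehweg type (positivity of $\det f_*\mathcal O_X(m(K_{X/Y}+\Delta))$ feeding into the Kodaira dimension), I would show
\[
\kappa(K_X+\Delta)\ \ge\ \kappa(K_F+\Delta|_F)+\kappa\bigl(K_Y+\tfrac1{mr}\det f_*\mathcal O_X(m(K_{X/Y}+\Delta))\bigr),
\]
where $r$ is the rank of the direct image sheaf. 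The hypothesis says this determinant has positive Kodaira dimension on $Y_Z$, where $K_{Y_Z}\equiv 0$. Adding the pullback of an ample divisor from $Z$ should then give Kodaira dimension at least $1$, and in fact at least $1+\kappa(\det|_{Y_Z})\ge 2$. A cheaper fallback is to apply the argument fiberwise: on $X_Z\to Y_Z$, combine $\det$ positivity with $C_{n-1,2}$ to get $\kappa(K_{X_Z}+\Delta|_{X_Z})\ge\kappa(K_F+\Delta|_F)+1$, then glue along $\mathbb P^1$ using the ample $\phi^*A$ coming from $K_Y$ to gain one.

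The main obstacle is the $\mathbb P^1$ gluing step. The $\mathbb Q$-divisors $K_{X/Y}+\Delta$ and $\det$ are only known to be pseudo-effective, not effective, so adding the positive part $\phi^*A$ from $K_Y$ requires an additivity statement. I would first try to get this from the precise form of Proposition~\ref{caoproposition}: the singular-metric positivity of $f_*$ gives an inequality $\kappa(K_X+\Delta)\ge\kappa(K_F+\Delta|_F)+\kappa(K_Y+\epsilon\det)$. After that, only the purely base-level computation $\kappa(K_Y+\epsilon\det)\ge1$ on a threefold fibered over $\mathbb P^1$ remains, and it follows from an easy-addition argument on $\phi$.
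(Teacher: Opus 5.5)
Your reductions for $g(Z)\ge 1$ are fine. In fact $g(Z)\ge 1$ already forces $q(Y)>0$, so Theorem A suffices; the paper uses Theorem~\ref{schnellthm} instead. The gap is in the case $Z\simeq\mathbb{P}^1$, which is the entire content of the statement.

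Write $N:=\det f_*\mathcal{O}_X(m(K_{X/Y}+\Delta))$, and let $g\colon Y\to Z$ denote the Iitaka fibration (your $\phi$).

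\textbf{The key inequality is unsupported.} Your inequality $\kappa(K_X+\Delta)\ge\kappa(K_F+\Delta|_F)+\kappa(K_Y+\tfrac{1}{mr}N)$ is not what Proposition~\ref{caoproposition} gives. That proposition is an Ohsawa--Takegoshi type extension of sections from a fiber $Y_Z$ to $Y$, after twisting by $g^*(cA_Z)$. It applies only to sections that are $L^2$ with respect to the singular metrics $h_H,h$. In the generality you state it, your inequality is a $C^{+}_{n,m}$-type strengthening of the Iitaka conjecture.
\begin{itemize}
\item The tool actually available is the pseudo-effectivity of $K_{X/Y}+\Delta+E-\epsilon f^*N$ from \cite{Caopaun}, combined with Theorem~\ref{schnellthm}. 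It yields your inequality only when the divisor on $Y$ is big.
\item That covers the subcase $\kappa(N|_{Y_Z})=2$, which is the paper's Case~1. There $K_Y$ is absorbed by replacing $\Delta$ with $\Delta+\widetilde{\epsilon}f^*K_Y$.
\item It does not cover the subcase $\kappa(N|_{Y_Z})=1$, where $K_Y+\epsilon N$ has Kodaira dimension at most $2$.
\end{itemize}
Also, easy addition runs the wrong way. It gives $\kappa(D)\le\kappa(D|_{Y_Z})+1$, whereas a lower bound for $\kappa(K_Y+\epsilon N)$ requires an extension argument, since $N$ is only pseudo-effective on $Y$.

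\textbf{The missing mechanism.} For $\kappa(N|_{Y_Z})=1$ you need something that guarantees the $L^2$ hypothesis of Proposition~\ref{caoproposition}. The paper obtains it as follows.
\begin{itemize}
\item For $A$ sufficiently ample on $Z$, the system $|pN+g^*A|$ defines a rational map onto a surface $M$ that factors $g$.
\item Its general fiber $Y_M$ is an elliptic curve with $i\Theta_{\det h}(N)|_{Y_M}\equiv 0$.
\item By Proposition~\ref{caoprop2}, $f_*\mathcal{O}_X(m(K_{X/Y}+\Delta))$ is Hermitian flat with smooth metric along $Y_M$. So the extension applies and gives $\kappa(K_X+\Delta)=\kappa(K_{X_Z}+\Delta|_{X_Z})+1$.
\item Cao's $C_{n-1,2}$ for $X_Z\to Y_Z$ then finishes the argument.
\end{itemize}

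Note that the gain is exactly $+1$, coming from the $\mathbb{P}^1$ direction. Both of your targets overshoot the statement: $\kappa(K_Y+\epsilon N)\ge 2$, and $+1$ fiberwise followed by another $+1$ from gluing. In both versions, the gluing step that you yourself flag as the main obstacle is left unproved.
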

Here, $Y_Z$ denotes a general fiber of the Iitaka fibration 
$g \colon Y \to Z$ of $Y$, we refer to Section~5 for more details. 
By Theorems~B, C, and~D, we can improve Theorem~B by adding the condition 
$\kappa(Y) \leq 1$ in Case~1 of Theorem~B.

We encounter some difficulties in dropping the condition in Theorem~D. 
However, we can still deduce the Iitaka inequality in some situations, where 
$\kappa(Y)=1$, $Z \simeq \mathbb{P}^1$, and
\[
\kappa\!\left(\det f_{*}\mathcal{O}_X\!\left(m(K_{X/Y} + \Delta)\right)\big|_{Y_Z}\right) \leq 0,
\]
we refer to Remark~\ref{remarksection5} for details.

In the last section, we study the case when the threefold variety $Y$ satisfies $\kappa(Y) = q(Y) = 0$. Up to a finite étale cover, it remains to prove the Iitaka inequality when $Y$ is a Calabi–Yau threefold. First, we are able to prove the following theorem.

\begin{theoremE}[Theorem \ref{overCY}]
Let $(X,\Delta)$ be a Klt pair. Let $f : X \to Y$ be an algebraic fiber space, where $X$ is a smooth projective variety of dimension $n$ and 
$Y$ is a Calabi-Yau threefold, and let $F$ be a general fiber of $f$. Then
\[
\kappa(K_X +\Delta) \geq \kappa(K_F + \Delta|_{F}),\]
provided that $\kappa(\det f_{*}\mathcal{O}_X\bigl(m(K_{X/Y} + \Delta)\bigr) > 1$ for $m$ sufficiently divisible.    
\end{theoremE}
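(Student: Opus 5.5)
Since $Y$ is Calabi--Yau, $K_Y\sim_{\Q}0$, so $K_X+\Delta\sim_{\Q} K_{X/Y}+\Delta$. It therefore suffices to prove
\[
\kappa(K_{X/Y}+\Delta)\ \ge\ \kappa(K_F+\Delta|_F).
\]
Set $D_m:=\det f_*\mathcal{O}_X(m(K_{X/Y}+\Delta))$. I will use the standard mechanism (Viehweg's fiber product trick, as in Proposition~\ref{caoproposition}). Let $r_m$ be the rank of $f_*\mathcal{O}_X(m(K_{X/Y}+\Delta))$, and let $f^{(r_m)}\colon X^{(r_m)}\to Y$ be a desingularized $r_m$-fold fiber product. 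Then there is a nonzero map
\[
f^{(r_m)*}D_m\ \longrightarrow\ \mathcal{O}\bigl(m(K_{X^{(r_m)}/Y}+\Delta^{(r_m)})\bigr).
\]
Pushing sections of multiples of $D_m$ along this map, together with weak positivity of $f_*\mathcal{O}_X(m(K_{X/Y}+\Delta))$, yields the following Viehweg-type inequality for an ample $H$ on $Y$:
\[
\kappa(K_{X/Y}+\Delta)\ \ge\ \kappa(K_F+\Delta|_F)+\kappa(D_m)-\text{(correction)}.
\]
The correction is needed because only $D_m$, not $H$, is directly available.

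My plan is to reduce to a situation covered by earlier sections by means of the Iitaka fibration of $D_m$. Since $D_m$ is pseudoeffective (weak positivity), the hypothesis $\kappa(D_m)>1$ means $\kappa(D_m)\in\{2,3\}$. I split into two cases.

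\textbf{Case $\kappa(D_m)=3$.} Here $D_m$ is big. Writing $D_m\sim_{\Q}A+E$ with $A$ ample and $E$ effective, Viehweg's argument gives that $K_{X/Y}+\Delta-\epsilon f^*A$ is $\Q$-effective up to multiples of the fiber canonical system. This is the same setup as the general-type base case \cite{viehweglitaka}. The conclusion is then $\kappa(K_{X/Y}+\Delta)=\kappa(K_F+\Delta|_F)+3\ge\kappa(K_F+\Delta|_F)$.

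\textbf{Case $\kappa(D_m)=2$.} Take the Iitaka fibration $h\colon Y\dashrightarrow S$ of $D_m$, resolved to a morphism from a birational model $Y'\to Y$; since $\kappa(D_m)=2$, $S$ is a surface. A general fiber $C$ of $h$ is a curve with $\kappa(D_m|_C)=0$. Consider the composite fibration $g=h\circ f\colon X'\to S$, whose general fiber is $X_C:=f^{-1}(C)\to C$. By $C_{n,2}$ \cite{Caolitaka2} (the easy addition over a surface), I get
\[
\kappa(K_{X'}+\Delta')\ \ge\ \kappa(K_{X_C}+\Delta|_{X_C})+\kappa(S).
\]
This is not directly useful because $\kappa(S)$ may be $-\infty$. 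So instead I work relatively: I use the pullback of an ample divisor on $S$, which is dominated by $D_m$, as the positive twist in Viehweg's argument. This should give
\[
\kappa(K_{X/Y}+\Delta)\ \ge\ \kappa\bigl(K_{X_C/C}+\Delta|_{X_C}\bigr)+2.
\]
By $C_{n-2,1}$ \cite{Kawamatacurves} applied to $X_C\to C$,
\[
\kappa(K_{X_C}+\Delta|_{X_C})\ \ge\ \kappa(K_F+\Delta|_F)+\kappa(C).
\]
Comparing $K_{X_C}$ with $K_{X_C/C}$ costs at most $\deg$-type terms, i.e.\ at most one dimension when $C\simeq\mathbb{P}^1$. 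The margin of $+2$ absorbs this, and the claimed inequality follows.

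The main obstacle is the Case $\kappa(D_m)=2$ bookkeeping. One must check that the map $f^{(r)*}D_m\to$ (relative pluricanonical) survives restriction over $S$. One must also verify that the loss from $K_{X_C/C}$ versus $K_{X_C}$ when $C$ is rational is at most $1<2$. I would first try to control this by a direct comparison of Iitaka dimensions via the easy addition formula on $g$. The hypothesis $\kappa>1$, rather than $\kappa\ge1$, appears to be exactly what is needed for this margin. With $\kappa(D_m)=1$ the margin $+1$ would not absorb the loss from a rational $C$, which matches the extra condition in Theorem~D.
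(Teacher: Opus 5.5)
\textbf{The $\kappa(D_m)=3$ case.} This case is essentially the paper's: the determinant is big, and one concludes by \cite{Caopaun} or Viehweg's argument.

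\textbf{The gap in the $\kappa(D_m)=2$ case.} The gap sits exactly at the step you say ``should give''
\[
\kappa(K_{X/Y}+\Delta)\ \ge\ \kappa\bigl(K_{X_C/C}+\Delta|_{X_C}\bigr)+2 .
\]
Viehweg's fibre-product argument, or \cite{Caopaun}, only yields pseudo-effectivity of $K_{X/Y}+\Delta+E-\epsilon f^{*}D_m$. On the resolved model, modulo exceptional and codimension-two terms, this is pseudo-effectivity of $K_{X'}+\Delta'-\epsilon\,(h\circ f)^{*}A_S$.

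Gaining $\dim S$ in Kodaira dimension needs more. You need $\mathbb{Q}$-effectivity of $m(K_{X'}+\Delta')-(h\circ f)^{*}A_S$, that is, actual extension of pluricanonical sections from $X_C$ to $X'$. In the big case Viehweg gets this by absorbing the ample twist on $Y$ (inherent in weak positivity) into the big determinant. Here $h^{*}A_S$ is not big on $Y'$, so nothing absorbs that twist. You also cannot invoke Theorem~\ref{schnellthm} for $X'\to S$: it needs $S$ non-uniruled, whereas here $S$ is in general a rational surface (the paper's $Z$). So this step is the core of the proof, not bookkeeping, and your proposal gives no mechanism for it.

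\textbf{How the paper fills it.} The paper uses an extension argument.
\begin{itemize}
\item It takes the pseudo-effective $D$ from \cite{Caopaun} and $m_1\gg m_2$. It rewrites $(m_1+m_2)(K_{\widetilde X/\widetilde Y}+\bar\Delta)$, with $\bar\Delta$ still klt, as $m_1\bigl(K_{\widetilde X/\widetilde Y}+\widetilde\Delta+\tfrac{m_2}{m_1}D\bigr)+\epsilon m_2\,\widetilde f^{*}\bigl(\phi^{*}N-\tfrac1r E_N\bigr)$. The direct image thereby acquires a positive twist pulled back from $Z$.
\item It then splits on $c_1$ of $\det\widetilde f_{*}\mathcal{O}\bigl(m_1(K_{\widetilde X/\widetilde Y}+\widetilde\Delta+\tfrac{m_2}{m_1}D)\bigr)$ restricted to the fibre curve $\widetilde Y_Z$.
\item If this $c_1$ is nonzero, the twisted determinant is big, and one concludes as in the $\kappa=3$ case.
\item If it is zero, Proposition~\ref{caoprop2} makes the direct image Hermitian flat with smooth metric over $\widetilde Y_Z$. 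The $L^2$ hypothesis of Proposition~\ref{caoproposition} is then automatic. Sections over $\widetilde Y_Z$ extend after a twist by $\widetilde g^{*}(cA_Z)$, which the positive twist absorbs, and Kawamata's $C_{n,1}$ over $\widetilde Y_Z$ finishes the argument.
\end{itemize}

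\textbf{The rational-$C$ discussion.} Your worry about rational $C$ is misplaced. $Y$ is not uniruled, so $C$ is not rational; in fact it is elliptic, so $K_{X_C/C}=K_{X_C}$. In any case, a drop in Kodaira dimension cannot be ``absorbed by a margin.''

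\textbf{The role of $\kappa(D_m)>1$.} It is not a numerical margin. It makes the fibres curves, where ``positive degree or flat'' is an exhaustive dichotomy for the pseudo-effective determinant. For $\kappa(D_m)=1$ the fibres are surfaces, and the paper needs the extra hypothesis of Proposition~\ref{propoovercy}.
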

In the proof of this theorem, we use the bundle 
$\det f_{*}\mathcal{O}_X\bigl(m(K_{X/Y}+\Delta)\bigr)$ 
to fiber (after resolution) the Calabi–Yau variety $Y$ in the case when
\[
\kappa\Bigl(\det f_{*}\mathcal{O}_X\bigl(m(K_{X/Y} + \Delta)\bigr)\Bigr) = 2.
\]

We partially prove the Iitaka inequality
\[
\kappa(K_X + \Delta) \geq \kappa(K_F + \Delta|_{F}),
\]
in the case when $Y$ is a Calabi–Yau threefold and
\[
0 \leq \kappa\Bigl(\det f_{*}\mathcal{O}_X\bigl(m(K_{X/Y} + \Delta)\bigr)\Bigr) \leq 1,
\]
as established in Proposition~\ref{propoovercy} and Remarks~\ref{remark1overcy} and~\ref{remarkovercy2}. It is expected that $
\det f_{*}\mathcal{O}_X\bigl(m(K_{X/Y} + \Delta)\bigr)$
is $\mathbb{Q}$-effective when the base variety is a Calabi--Yau threefold. However, it is not yet completely known whether this holds for every Calabi--Yau threefold. We refer to the end of Section~6 for more details. This question will be discussed in the author's forthcoming work.

In the preliminary Section~2, we recall some results, notation, and definitions needed for the proofs of the above theorems.

\subsection*{Acknowledgments}
I would like  to thank Jungkai Chen for his kind invitation in March 2026 and to NCTS for its  hospitality. I have  benefited from  discussions on this subject with Chi-Kang Chang, Jungkai Chen, Xi Chen, Ching-Jui Lai, Hsueh-Yung Lin, and Steven Lu. 

\section{Background}
\subsection{Positivity of Pushforward Sheaves and Extension Theorems}
One powerful result in algebraic geometry, which is helpful for the Iitaka program, is a good extension theorem. In \cite{Caolitaka2}, the author used a consequence of the Ohsawa--Takegoshi extension theorem to prove \(C_{n,2}\), which is recalled in the proposition below. We first recall a deeper version of Fujita-type results on the positivity of direct image sheaves. This builds on the work of many authors.
\begin{theorem}[\cite{HSP},{\cite{pauntakayama}}]\label{positivity results}
Let $f \colon X \to Y$ be an algebraic fiber space between smooth varieties, with general fiber $F$. Let $D$ be a pseudo-effective divisor on $X$ endowed with a singular Hermitian metric $h_D$ of semi-positive curvature, i.e.,
\[
i\Theta_{h_D}(\mathcal{O}_X(D)) \geq 0.
\]
Then the direct image sheaf
\[
f_{*}\bigl(\omega_{X/Y} \otimes \mathcal{O}_X(D) \otimes \mathcal{J}(h_D)\bigr)
\]
admits a singular Hermitian metric $h$ such that
\[
i\Theta_{h}\bigl(f_{*}(\omega_{X/Y} \otimes \mathcal{O}_X(D) \otimes \mathcal{J}(h_D))\bigr) \geq 0.
\]
Moreover, the determinant line bundle
\[
\det f_{*}\bigl(\omega_{X/Y} \otimes \mathcal{O}_X(D) \otimes \mathcal{J}(h_D)\bigr)
\]
is pseudo-effective, and its induced metric satisfies
\[
i\Theta_{\det h}\bigl(\det f_{*}(\omega_{X/Y} \otimes \mathcal{O}_X(D) \otimes \mathcal{J}(h_D))\bigr) \geq 0.
\]

\end{theorem}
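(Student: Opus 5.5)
The plan is to reconstruct the proof of Păun--Takayama and Hacon--Popa--Schnell. It has three steps: construct a fiberwise $L^2$ metric on the smooth locus, show that its curvature is semi-positive there, and extend across the bad locus using the Ohsawa--Takegoshi extension theorem. Throughout, set $\mathcal{E} := f_{*}\bigl(\omega_{X/Y}\otimes\mathcal{O}_X(D)\otimes\mathcal{J}(h_D)\bigr)$.

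\emph{Step 1: the metric on a Zariski-dense open set.} Take a Zariski open $Y_0\subset Y$ over which $f$ is smooth, $\mathcal{E}$ is locally free, and base change holds, so that $\mathcal{E}_y = H^0\bigl(X_y, K_{X_y}\otimes D|_{X_y}\otimes \mathcal{J}(h_D|_{X_y})\bigr)$ for $y\in Y_0$. Shrinking $Y_0$ further, we may assume by Ohsawa--Takegoshi restriction that $\mathcal{J}(h_D)|_{X_y}=\mathcal{J}(h_D|_{X_y})$ for general $y$. For a section $u$ of $\mathcal{E}_y$, viewed as a $D$-valued $(n-m,0)$-form on $X_y$, define
\[
\|u\|_h^2(y) = \int_{X_y} c_{n-m}\, u\wedge\bar u\, e^{-\varphi_D},
\]
where $\varphi_D$ is a local weight of $h_D$. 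This integral is finite precisely because $u$ lies in the multiplier ideal $\mathcal{J}(h_D|_{X_y})$.

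\emph{Step 2: semi-positivity.} The goal is to show that $\log\|u\|_h^{2}$, for the dual metric on $\mathcal{E}^*$, is plurisubharmonic (equivalently, that $h$ is Griffiths semi-positive). The plan is to use the optimal version of the Ohsawa--Takegoshi extension with sharp constant, due to Błocki and Guan--Zhou. Applied over a small disc $\Delta\subset Y$ through $y$, and using the adjunction $K_X=K_{X/Y}+f^*K_Y$, it extends a section $u$ over $X_y$ to a section $U$ over $f^{-1}(\Delta)$ with
\[
\int_{f^{-1}(\Delta)} |U|^2 e^{-\varphi_D} \le \operatorname{vol}(\Delta)\,\|u\|^2_h(y).
\]
This optimal estimate is exactly the sub-mean-value inequality for $\log$ of the dual norm, which gives plurisubharmonicity as in Berndtsson--Păun and Guan--Zhou. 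When $h_D$ is smooth, one could instead invoke Berndtsson's curvature computation directly; the singular case is handled by this extension argument.

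\emph{Step 3: extension across $Y\setminus Y_0$.} This is the main obstacle. Let $\sigma$ be a local section of $\mathcal{E}$ near a point of $Y\setminus Y_0$. The same Ohsawa--Takegoshi estimate, applied with a uniform constant, bounds $\|\sigma\|_h$ locally from below on $Y_0$ in terms of the values of $\sigma$. Hence the dual weights are locally bounded above, and by Riemann-type extension of psh functions the metric extends to a singular Hermitian metric on all of $Y$ with $i\Theta_h\ge 0$ in the Păun--Takayama sense. Torsion-free sheaves are handled by allowing the metric to be $+\infty$ on a codimension-$\ge 2$ set.

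\emph{Step 4: the determinant.} For $r=\operatorname{rk}\mathcal{E}$, the metric $\det h$ on $\det\mathcal{E}$ has local weight $\log\det(h_{i\bar j})$ with respect to a local frame. Griffiths semi-positivity of $h$ implies that this weight is psh on $Y_0$; this is the standard statement that $\det$ preserves Griffiths semi-positivity, obtained by taking the wedge of dual sections. The weight is bounded above near $Y\setminus Y_0$ by Step 3, so it extends as a psh weight over codimension $1$ and, trivially, over codimension $\ge 2$. Therefore $i\Theta_{\det h}\ge 0$, and $\det\mathcal{E}$ is pseudo-effective.
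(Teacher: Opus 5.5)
\textbf{Overall.} The paper gives no proof of this statement; it quotes it from Hacon--Popa--Schnell and P\u{a}un--Takayama. Your four steps are the route taken there: the $L^2$ fiber metric, optimal Ohsawa--Takegoshi for the sub-mean-value inequality, a uniform Ohsawa--Takegoshi bound to cross $Y\setminus Y_0$, and then the determinant.

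\textbf{A false step in Step~1.} You shrink $Y_0$ to a Zariski open set on which $\mathcal{J}(h_D)|_{X_y}=\mathcal{J}(h_D|_{X_y})$, so that all fiber integrals are finite. This fails for two reasons.
\begin{itemize}
\item The restriction theorem only gives $\mathcal{J}(h_D|_{X_y})\subseteq\mathcal{J}(h_D)|_{X_y}$. That is the wrong inclusion for finiteness.
\item The reverse inclusion can fail on a dense set of fibers. Take $X=Y\times F$ over a curve $Y$, $D=f^{*}A$ with $A$ ample, and $h_D=f^{*}h_A$. Let the curvature current of $h_A$ be $\sum_k\varepsilon_k[c_k]$ plus a smooth semi-positive form, with $\{c_k\}$ dense and $\sum_k\varepsilon_k<1$. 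All Lelong numbers are $<1$, so $\mathcal{J}(h_D)=\mathcal{O}_X$. But $h_D|_{X_{c_k}}\equiv+\infty$, so every nonzero vector of $\mathcal{E}$ at $c_k$ has infinite $L^2$ norm, and no Zariski open set avoids all the $c_k$.
\end{itemize}
The repair is the one used in the references. For $u\in H^0(W,\mathcal{E})$ and $W'\Subset W$ one has $\int_{f^{-1}(W')}|u|^2e^{-\varphi_D}<\infty$, so by Fubini $\|u\|_h(y)<\infty$ for almost every $y$. A singular Hermitian metric is allowed to be $+\infty$ on a null set. Step~2 then goes through unchanged: you only extend vectors of finite norm, and wherever every nonzero vector has infinite norm, $\log|g|_{h^*}=-\infty$.

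\textbf{Justifications to replace in Steps~2 and~4.}
\begin{itemize}
\item In Step~2, the sub-mean-value inequality gives plurisubharmonicity of $\log|g|_{h^*}$ only together with upper semicontinuity, which you do not address. It follows from Fatou's lemma: $(y,v)\mapsto\|v\|_h(y)$ is jointly lower semicontinuous on $Y_0$, and it dominates a continuous norm because $e^{-\varphi_D}$ is locally bounded below.
\item In Step~4, wedging dual sections does not by itself show that $\log|g_1\wedge\cdots\wedge g_r|_{\det h^*}$ is psh when $h$ is singular. One regularizes $h^*$ locally by convolution into smooth Griffiths semi-negative metrics decreasing to $h^*$ (Berndtsson--P\u{a}un, Raufi). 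For these the determinant statement is the trace computation, and one passes to the decreasing limit.
\item With the convention $|e|_h^2=e^{-\phi}$, the psh weight of $\det h$ is $-\log\det(h_{i\bar j})$, not $\log\det(h_{i\bar j})$. This is also the sign for which Step~3 gives the upper bound you invoke.
\item Pseudo-effectivity of $\det\mathcal{E}$ needs $0<\det h<\infty$ almost everywhere. This again comes from the Fubini argument above.
\end{itemize}
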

\begin{variant}\label{variant1}
If $\kappa(F) \geq 0$, then $K_{X/Y}$ is pseudo-effective. It is well known that $K_{X/Y}$ admits a canonical singular Hermitian metric with semi-positive curvature, called the Narasimhan--Simha metric. If $h_D$ denotes the induced metric on
\[
D = (m-1)K_{X/Y}, \quad m \geq 2,
\]
then there is a natural inclusion
\[
f_{*}\bigl(\omega_{X/Y} \otimes \mathcal{O}_X(D) \otimes \mathcal{J}(h_D)\bigr)
\hookrightarrow
f_{*}\omega_{X/Y}^{\otimes m},
\]
which is generically an isomorphism over the smooth locus of $f$. In particular, the sheaf
$f_{*}\omega_{X/Y}^{\otimes m}$ admits a singular Hermitian metric, denoted by $h$ (without loss of generality), such that
\[
i\Theta_{h}\bigl(f_{*}\omega_{X/Y}^{\otimes m}\bigr) \geq 0
\]
and
\[
i\Theta_{\det h}\bigl(\det f_{*}\omega_{X/Y}^{\otimes m}\bigr) \geq 0.
\]

\end{variant}
\begin{variant}\label{variant 2}
From Theorem~\ref{positivity results} and Variant~\ref{variant1}, we see that for any pseudo-effective divisor $D$ endowed with a singular Hermitian metric $h_D$, the direct image sheaf
\[
f_{*}\bigl(\omega_{X/Y}^{\otimes m} \otimes \mathcal{O}_X(D)\bigr)
\]
admits a singular Hermitian metric $h$ such that
\[
i\Theta_{h}\bigl(f_{*}(\omega_{X/Y}^{\otimes m} \otimes \mathcal{O}_X(D))\bigr) \geq 0
\]
for all sufficiently large $m \in \mathbb{N}$. Indeed, for $m$ large enough and for a general fiber $F$, we have
\[
\mathcal{J}\bigl(h_D^{1/m}\big|_{F}\bigr) = \mathcal{O}_F.
\]
\end{variant}
\begin{variant}\label{variant3}
Let $(X,\Delta)$ be a klt pair such that $K_X + \Delta$ is $\mathbb{Q}$-Cartier, and let $D$ be a pseudo-effective divisor on $X$. Then, for all sufficiently large $m \in \mathbb{N}$, the direct image sheaf
\[
f_{*}\mathcal{O}_X\bigl(m(K_{X/Y} + \Delta) + D\bigr)
\]
admits a singular Hermitian metric $h$ such that
\[
i\Theta_{h}\bigl(f_{*}\mathcal{O}_X(m(K_{X/Y} + \Delta) + D)\bigr) \geq 0,
\]
and
\[
i\Theta_{\det h}\bigl(\det f_{*}\mathcal{O}_X(m(K_{X/Y} + \Delta) + D)\bigr) \geq 0.
\]
\end{variant}
We now recall the following proposition from \cite{Caolitaka2}.
\begin{proposition}[{\cite[Proposition 2.9]{Caolitaka2}}]\label{caoproposition}
In the setting of Theorem~\ref{positivity results} and Variants~\ref{variant1}, \ref{variant 2}, and \ref{variant3}, we assume moreover that there exists an algebraic fiber space
\[
g \colon Y \to Z
\]
onto a projective variety \( Z \). Let \( H \) be a pseudo-effective divisor on \( Y \) endowed with a singular metric \( h_H \) such that
\[
i\Theta_{h_H}(\mathcal{O}_{Y}(H)) \geq 0 .
\]
Let \( A_Z \) be an ample line bundle on \( Z \). Then, for \( c \in \mathbb{N} \) sufficiently large (depending only on \( A_Z \) and \( Z \)), the following holds: for every
\[
s \in H^{0}\!\left(Y_Z,\, \omega_{Y} \otimes \mathcal{O}_{Y}(H) \otimes f_{*}\mathcal{O}_{X}\bigl(m(K_{X/Y} + \Delta) + D\bigr)\right)
\]
such that
\[
\int_{Y_Z} |s|^{2}_{h_H, h} < \infty,
\]
there exists
\[
S \in H^{0}\!\left(Y,\, \omega_{Y} \otimes \mathcal{O}_{Y}(H) \otimes f_{*}\mathcal{O}_{X}\bigl(m(K_{X/Y} + \Delta) + D\bigr) \otimes g^{*}(cA_Z)\right)
\]
such that
\[
S|_{Y_Z} = s \otimes g^{*}e,
\]
where \( e \in \mathcal{O}_{Z,z}(cA_Z) \).

\end{proposition}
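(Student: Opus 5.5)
The plan is to deduce the statement from the Ohsawa--Takegoshi extension theorem with an ample twist, in the form of Cao's argument. I will treat $E:=f_{*}\mathcal{O}_X\bigl(m(K_{X/Y}+\Delta)+D\bigr)$ with the semi-positively curved singular metric $h$ supplied by Theorem~\ref{positivity results} and Variants~\ref{variant1}--\ref{variant3}.

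\textbf{Step 1: pass back to $X$.} Set $V=\omega_Y\otimes\mathcal{O}_Y(H)\otimes E\otimes g^{*}(cA_Z)$. By the projection formula,
\[
H^0(Y,V)=H^0\bigl(X,\ \omega_X\otimes \mathcal{O}_X\bigl((m-1)(K_{X/Y}+\Delta)+\Delta+D+f^{*}H\bigr)\otimes (g\circ f)^{*}(cA_Z)\bigr),
\]
and the analogous identity holds on $X_{Y_Z}=f^{-1}(Y_Z)$. The section $s$ then corresponds to a section $\tilde s$ on $X_{Y_Z}$. The key point is to arrange the metrics so that $\int_{Y_Z}|s|^2_{h_H,h}<\infty$ translates into an $L^2$ condition for $\tilde s$. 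For this I will use the construction of $h$ via the fiberwise Bergman kernel / Narasimhan--Simha metric. In that construction, the pointwise norm $|s|_h^2$ at $y$ equals, up to the normalization coming from the $m$-th root, a fiber integral of $\tilde s$ with respect to the metric $h_{m-1}$ on $(m-1)(K_{X/Y}+\Delta)+D$ that is induced from $h$ itself. Fubini then gives finiteness of $\int_{X_{Y_Z}}|\tilde s|^2_{h_{m-1}\otimes f^*h_H}$. The line bundle $(m-1)(K_{X/Y}+\Delta)+\Delta+D+f^*H$ carries a metric of semi-positive curvature, namely the metric $h_{m-1}\otimes h_\Delta\otimes h_D\otimes f^*h_H$.

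\textbf{Step 2: extend.} I will apply the Ohsawa--Takegoshi extension theorem to the family $g\circ f\colon X\to Z$ near a general point $z$, or more precisely its global version for fibrations in the form of Berndtsson--P\u{a}un and Cao. The relevant twisted line bundle is $K_{X}\otimes L\otimes (g\circ f)^*(cA_Z)$ with $L$ semi-positive. I choose $c$ such that $cA_Z-K_Z$ is sufficiently positive on $Z$, or handle this with a Bergman-kernel argument as in Cao. This lets the fibration extension theorem extend $\tilde s\otimes e$ from the fiber $X_z=X_{Y_Z}$ to a global section $\tilde S$ with $L^2$ bound, where $e$ is a local generator of $cA_Z$ at $z$. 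The constant $c$ depends only on $Z$ and $A_Z$. It is needed to make the base directions positive enough, for instance $cA_Z$ should generate the jets needed to handle $K_Z$ and the codimension $\dim Z$ of the fiber.

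\textbf{Step 3: descend.} Pushing $\tilde S$ forward to $Y$ gives $S\in H^0(Y,V)$, and the fiberwise identification shows $S|_{Y_Z}=s\otimes g^{*}e$.

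\textbf{Main obstacle.} I expect the main obstacle to be Step 1: matching the $L^2$ norm for $h$ on the direct image with an integrable norm on $X$. This is subtle because $h$ is defined through $m$-th roots, and because $E$ may fail to be locally free off a codimension-two set. I would first try Cao's trick of using the metric $h_{m-1}$ built from the $m$-Bergman kernel, for which this comparison holds by construction. I would also restrict everything to the locus where $f$ is smooth and $E$ is locally free, then extend across the complement by Hartogs and by the $L^2$ Riemann extension property.
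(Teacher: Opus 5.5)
The paper gives no proof of this statement; it imports it from Cao, and your outline is the standard argument behind Cao's Proposition~2.9. You identify $s$ with a section $\tilde s$ of $K_X+L'$ on $X_{Y_Z}=(g\circ f)^{-1}(z)$, where $L'=(m-1)(K_{X/Y}+\Delta)+\Delta+D+f^{*}H$. Fubini then turns $\int_{Y_Z}|s|^2_{h_H,h}<\infty$ into an $L^2$ bound for $\tilde s$, you extend by Ohsawa--Takegoshi along $g\circ f$, and you push forward by the projection formula. This works. Three points in your write-up should be corrected.

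\textbf{First:} the metric on $D$ is counted twice. The semi-positive metric on $L'$ is $h_{m-1}\otimes h_\Delta\otimes f^{*}h_H$. Here $h_{m-1}$ is the metric on $(m-1)(K_{X/Y}+\Delta)+D$ built from the relative $m$-Bergman kernel metric $h^{(m)}$, roughly $(h^{(m)})^{(m-1)/m}\otimes h_D^{1/m}$. It does not come from $h$. Tensoring with an additional $h_D$ counts $D$ twice.

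\textbf{Second:} no $K_Z$ needs to be absorbed. The bundle you extend is already adjoint on $X$, and $K_X|_{X_z}\simeq K_{X_z}$ because $N_{X_z/X}$ is trivial. Replace $A_Z$ by a very ample multiple. Take $\sigma=(g\circ f)^{*}(\sigma_1,\dots,\sigma_k)$ with $k=\dim Z$ and $\sigma_i\in H^0(Z,A_Z)$ cutting out $z$ transversally. Since $i\partial\bar\partial\log|\sigma|^2\geq -(g\circ f)^{*}i\Theta(A_Z)$, the Ohsawa--Takegoshi curvature hypothesis for $L'+c\,(g\circ f)^{*}A_Z$ holds once $c\geq k+1$. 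This is exactly why $c$ depends only on $(Z,A_Z)$.

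\textbf{Third:} what you call the main obstacle is harmless.
\begin{itemize}
\item The identity $|u|_h^2(y)=\int_{X_y}|u|^2_{h_{m-1}\otimes h_\Delta}$ is the definition of $h$ on the Zariski open set where $f$ is smooth and $E$ is locally free.
\item For general $z$, the complement of this set meets $X_{Y_Z}$ in a null set, so the two $L^2$ integrals agree.
\item The remaining identifications (base change over a general $Y_Z$, torsion-freeness of $E|_{Y_Z}$) are routine.
\end{itemize}
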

\begin{remark}
Proposition~\ref{caoproposition} can be applied to prove Iitaka Conjecture~\ref{Conj1} when the base variety~$Y$  admits a good minimal model. In particular,  for threefolds base by the works of \cite{kawamataabundance}, \cite{Miyaoka}, and \cite{moriminimalthreefold}.
\end{remark}
\begin{proposition}[{\cite[Proposition 2.11]{Caolitaka2}}]\label{caoprop2}
Let \( E \) be a holomorphic vector bundle on a projective variety \( X \), and let \( h_E \) be a singular Hermitian metric on \( E \) with semi-positive curvature. Let \( U \subset X \) be a (topological) open subset. If
\[
i\Theta_{\det h_E}\bigl(\det E\bigr) \equiv 0 \quad \text{on } U,
\]
then \( h_E \) is smooth on \( E|_{U} \), and \( (E|_{U}, h_E) \) is Hermitian flat.
\end{proposition}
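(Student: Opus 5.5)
We argue locally. The statement is local over $U$, so we may shrink $U$ to a small ball on which $E$ is trivialized by a holomorphic frame. The metric $h_E$ is then a measurable family of Hermitian matrices $H(x)$. Semi-positive curvature (Griffiths sense for singular metrics) means that for every local holomorphic section $u$ of the dual $E^{*}$ the function $\log|u|^{2}_{h_E^{*}}$ is plurisubharmonic. Equivalently, $\log|u|^2_{h^*}$ is psh for each constant covector $u$.

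\textbf{Step 1: the determinant potential is pluriharmonic.} The induced metric on $\det E$ is $\det H$. The hypothesis $i\Theta_{\det h_E}(\det E)\equiv 0$ on $U$ says that $\varphi:=-\log\det H$ satisfies $i\partial\bar\partial\varphi=0$ in the sense of currents, since $\varphi$ is psh. By Weyl's lemma $\varphi$ is pluriharmonic, hence smooth, and on the ball $\varphi=\operatorname{Re} g$ for a holomorphic $g$. Changing the frame by the factor $e^{g/2r}$ (with $r=\operatorname{rank}E$), we may assume $\det H\equiv 1$.

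\textbf{Step 2: the psh functions are all pluriharmonic.} Let $H^{*}=H^{-1}$ be the dual matrix, so $\log\det H^{*}=0$. For the constant basis covectors $e_i^*$ the functions $\psi_i=\log\langle H^{-1}e_i^*,e_i^*\rangle$ are psh. By Hadamard's inequality, and more generally by the arithmetic–geometric mean inequality applied in any unitary frame $(u_1,\dots,u_r)$ with respect to the standard constant inner product,
\[
\sum_{i}\log|u_i|^2_{h^*}\;\ge\;\log\det H^{*}=0 .
\]
The key trick is to average over the unitary group. Consider $\Psi(x)=\int_{U(r)}\log\det_{\text{diag}}$, i.e. the Haar average of $\sum_i\log|g u_i|^2_{h^*}$, which is a psh function of $x$. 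By unitary invariance this average equals $r\int_{S^{2r-1}}\log|u|^2_{h^*}\,d\sigma(u)$. For a positive Hermitian matrix $A$, this spherical average equals $\log\det A$ plus a universal constant; this is the classical formula $\int_{S}\log\langle Au,u\rangle d\sigma=\frac1r\log\det A+c_r$. Hence $\Psi$ is constant. A constant function that is an average of psh functions, each bounded below by the pluriharmonic function in the inequality, forces almost every $\log|u|^2_{h^*}$ to have vanishing $i\partial\bar\partial$. More precisely, $i\partial\bar\partial\Psi=\int i\partial\bar\partial\log|u|^2_{h^*}=0$ with nonnegative integrands, so $i\partial\bar\partial\log|u|^2_{h^*}=0$ for a.e. $u$. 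By continuity in $u$ and upper semicontinuity this holds for all $u$ in a dense set.

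\textbf{Step 3: conclude smoothness and flatness.} Each $|u|^2_{h^*}$, for $u$ in a spanning dense family, is then the exponential of a pluriharmonic function, hence real-analytic. Polarization recovers all entries of $H^{-1}$ as real-analytic functions, so $h_E$ is smooth on $E|_U$. Once $h_E$ is smooth, it is Griffiths semipositive with $\operatorname{tr}\Theta=\Theta_{\det}=0$. A semipositive Hermitian endomorphism-valued form with zero trace vanishes, so $\Theta_{h_E}=0$ and $(E|_U,h_E)$ is Hermitian flat.

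The main obstacle is Step 2. It requires justifying the exchange of $i\partial\bar\partial$ with the unitary-group integral for singular psh families, and extracting the vanishing for every $u$ rather than almost every $u$. If this is delicate, the first fallback is Raufi's result that for a Griffiths semipositive singular metric whose determinant is smooth (here pluriharmonic), the curvature is a well-defined current. This gives $\operatorname{tr}\Theta_{h_E}=0$ together with $\Theta_{h_E}\ge0$ directly.
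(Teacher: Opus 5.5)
Your Step~2 has a genuine gap. In fact its conclusion is false, so Step~3 has nothing to stand on.

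First, the ``classical formula'' $\int_{S^{2r-1}}\log\langle Au,u\rangle\,d\sigma(u)=\frac1r\log\det A+c_r$ fails for $r\ge 2$. Taking $A=I$ forces $c_r=0$. But in an eigenbasis of $A$, concavity of $\log$ gives $\log\bigl(\sum_i\lambda_i|u_i|^2\bigr)\ge\sum_i|u_i|^2\log\lambda_i$. So the spherical average is $\ge\frac1r\log\det A$, with equality only when $A$ is scalar. For instance, take $r=2$ and $A=\operatorname{diag}(e,1)$. Since $|u_1|^2$ is uniformly distributed on $[0,1]$, the average equals $\int_0^1\log(1+(e-1)t)\,dt=\frac{1}{e-1}\neq\frac12$. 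The identity that does hold is $(\det A)^{-1}=\int_{S^{2r-1}}\langle Au,u\rangle^{-r}\,d\sigma(u)$. It involves $\langle Au,u\rangle^{-r}$ rather than its logarithm and gives no pluriharmonicity. So $\Psi$ is merely psh and $\ge 0$, not constant.

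Second, no repair of the averaging can succeed. The output you want is $i\partial\bar\partial\log|u|^2_{h^*}=0$ for almost every constant $u$, and this is false even for smooth flat metrics. On the unit disc, let $h^*$ have Gram matrix $G(x)=P(x)^*P(x)$ with $P(x)=\begin{pmatrix}1&x\\0&1\end{pmatrix}$. Then $\det G\equiv 1$, and $h^*$ is Griffiths semi-negative, indeed flat: it is the standard metric in the holomorphic frame given by the columns of $P^{-1}$. So its dual satisfies all hypotheses of the proposition. Yet for $u=(a,b)$ one has $\log|u|^2_{h^*}=\log\bigl(|a+xb|^2+|b|^2\bigr)$, which is strictly subharmonic whenever $b\neq 0$. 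Log-norms of constant sections are pluriharmonic only in a parallel frame. The scalar gauge change of Step~1 does not produce one, and you cannot choose a parallel frame before knowing flatness.

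The missing ingredient is exactly the analytic input you relegate to a fallback, and it has to be the main argument. The paper gives no proof here; the statement is quoted from Cao. A standard way to prove it runs along the lines of your fallback:
\begin{enumerate}
\item Since $\log|u|^2_{h^*}$ is psh, $h^*$ is locally bounded above on $U$. Then $\det h^*\equiv 1$ bounds it from below, so $h_E$ and $h_E^{-1}$ are locally bounded.
\item Raufi's theorem then makes $\Theta_{h_E}=\bar\partial(h^{-1}\partial h)$ a current with measure coefficients. It is Griffiths semipositive with $\operatorname{tr}\Theta_{h_E}=\Theta_{\det h_E}=0$, hence $\Theta_{h_E}=0$ by your trace argument.
\item For smoothness, $\Gamma:=h^{-1}\partial h$ is a $\bar\partial$-closed $(1,0)$-form with locally integrable coefficients, hence holomorphic. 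Since $\bar\partial h=(\partial h)^{*}$, we get $dh=h\Gamma+\Gamma^{*}h$ with $h$ bounded, which bootstraps to $h\in C^\infty$.
\item Flatness is then just $\Theta_{h_E}=0$.
\end{enumerate}
Your Step~1 and the closing linear-algebra observation fit into this route unchanged.
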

In suitable situations, we can apply the following theorem due to \cite{schnellcampanaconj}.
\begin{theorem}[{\cite[Theorem 12.1]{schnellcampanaconj}}]\label{schnellthm}
Let $f: X \to Y$ be an algebraic fiber space with $\kappa(F) \geq 0$. Let $H$
be an ample divisor on $Y$. If $mK_{X} - f^{*}H$ is pseudo-eﬀective for some $m\geq 1$, then
$m K_{X} - f^{*}H$ becomes eﬀective for $m$ suﬃciently large and divisible, provided that $Y$ is not uniruled.
\end{theorem}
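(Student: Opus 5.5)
The statement is \cite[Theorem~12.1]{schnellcampanaconj}, and the plan follows the strategy of that paper. The first step is to put the problem on the base $Y$. We write
\[
mK_X - f^{*}H = m K_{X/Y} + f^{*}(mK_Y - H),
\]
so that effectivity of $mK_X - f^{*}H$ is equivalent to
\[
H^{0}\bigl(Y,\ f_{*}\omega_{X/Y}^{\otimes m}\otimes \mathcal{O}_Y(mK_Y - H)\bigr)\neq 0 .
\]
Since $Y$ is not uniruled, $K_Y$ is pseudo-effective by Boucksom--Demailly--P\u{a}un--Peternell. Since $\kappa(F)\ge 0$, Variant~\ref{variant1} gives $f_{*}\omega_{X/Y}^{\otimes m}$ a singular Hermitian metric with semi-positive curvature. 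The pseudo-effectivity of $mK_X - f^{*}H$ will be used through its positive current. Pulling back an $\epsilon$-fraction of $f^{*}H$ and using Variant~\ref{variant 2} with $D$ a suitable multiple of $K_X - \tfrac1m f^{*}H$, I expect to show that the twisted sheaf $f_{*}\omega_{X/Y}^{\otimes m}\otimes\mathcal{O}_Y(mK_Y-H)$ is pseudo-effective in the sense of singular metrics, for $m$ large and divisible.

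The second step is to upgrade this pseudo-effectivity to the existence of a section. Here I would follow Schnell and replace $f_{*}\omega_{X/Y}^{\otimes m}$ by a Hodge-theoretic object. Via the Kawamata covering trick, the $m$-th power is realized inside $g_{*}\omega_{X'}$ for a cyclic cover $X'\to X$ branched along a general member of $|mK_{F}|$ spread out. The sheaf $g_{*}\omega_{X'}$ underlies a polarized Hodge module $M$ on $Y$. The associated Higgs sheaf of $M$ gives a morphism from the relevant graded piece, twisted by $-H$, into the graded piece tensored with $\Omega_Y^{1}$. Iterating this produces a nonzero map
\[
\mathcal{O}_Y(H)\otimes(\text{pieces})\to (\Omega_Y^{1})^{\otimes k}\otimes(\text{pieces}) .
\]
Non-uniruledness enters through Campana--P\u{a}un and Miyaoka: every torsion-free quotient of $(\Omega_Y^{1})^{\otimes k}$ has pseudo-effective determinant. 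This forces the Higgs field to be ``maximally degenerate" in the direction of $H$. As a result, the pseudo-effective twist descends to an actual nonzero section of $\mathcal{O}_Y(N(mK_Y) - H')$ tensored with the pushforward, for some ample $H'$ and large $N$.

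The final step is to absorb the ample part. The extra positivity coming from $H$ makes it possible to apply an Ohsawa--Takegoshi type extension statement, in the form of Proposition~\ref{caoproposition} with $Z$ a point, or a standard Viehweg weak-positivity argument. This extends sections from a general fiber $F$, where $mK_F$ is effective, to all of $X$ after multiplying by $m$ and dividing by $H$. The result is a nonzero section of $\mathcal{O}_X(m(mK_X - f^{*}H))$ for $m$ sufficiently divisible.

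I expect the main obstacle to be the second step, namely passing from a pseudo-effective twist to an actual section. This is a nonvanishing statement, and it is exactly where non-uniruledness of $Y$ must be used. The first approach I would try is the Hodge-module/Higgs-sheaf argument described above, combined with the Campana--P\u{a}un generic semipositivity of $\Omega_Y^{1}$.
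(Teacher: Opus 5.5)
The paper gives no proof of this statement. It quotes \cite{schnellcampanaconj}, with Remark~\ref{kltremarks} for the klt case, so your argument has to stand on its own. As written it does not.

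\textbf{The gap is Step~2, which is circular and aims at the wrong conclusion.} The cyclic-cover/Higgs-sheaf construction of Viehweg--Zuo and Popa--Schnell takes as \emph{input} a nonzero section of $\omega_X^{\otimes m}\otimes f^{*}\mathcal{O}_Y(-H)$ and branches along its zero divisor. That section is exactly what you are trying to produce.

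A member of $|mK_F|$ spread out over $Y$ requires an ample twist from $Y$: it lies in some $|mK_{X/Y}+f^{*}B|$ with $B$ positive, or exists only over an open subset. So the resulting Higgs sheaf never carries the $-H$ twist.

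Even when the construction is available, its output is a big subsheaf of $\mathrm{Sym}^k\Omega^1_Y(\log D)$, where $D$ is the discriminant. Via Campana--P\u{a}un this gives bigness of $K_Y+D$, which is information about the base, not a section on $X$. The sentence ``the pseudo-effective twist descends to an actual nonzero section'' has no mechanism behind it. Generic semipositivity of $\Omega^1_Y$ (Miyaoka, Campana--P\u{a}un) constrains quotients of $(\Omega^1_Y)^{\otimes k}$ but produces no sections.

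Step~3 is vacuous as stated. If $Z$ is a point, then $Y_Z=Y$ and Proposition~\ref{caoproposition} says nothing.

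\textbf{What closes the argument.} Your Step~1 is essentially right, and the ``nonvanishing'' you single out is supplied by the extension itself. Apply Proposition~\ref{caoproposition} to $g=\mathrm{id}_Y$, so that $Y_Z$ is a general point $y$ and $F=f^{-1}(y)$. Take $A_Z=H$ and let $c$ be the resulting constant; it does not depend on $m$.

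Put $D:=(c+1)(m_0K_X-f^{*}H)$. This is a \emph{fixed} pseudo-effective divisor with a semipositive metric $h_D$. For $m\gg 0$, Variant~\ref{variant 2} equips $f_{*}\mathcal{O}_X(mK_{X/Y}+D)$ with a semipositive metric. Every element of its general fiber $H^0\bigl(F,(m+(c+1)m_0)K_F\bigr)$ then has finite norm, because $\mathcal{J}(h_D^{1/m}|_F)=\mathcal{O}_F$. Such elements are nonzero for suitably divisible $m$, since $\kappa(F)\ge 0$.

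For the twist on $Y$ take $(m-1)K_Y$ with a singular semipositive metric. This is the only place non-uniruledness enters, through Boucksom--Demailly--P\u{a}un--Peternell. The extension then yields a nonzero element of
\[
H^0\bigl(Y,\ \omega_Y^{\otimes m}\otimes f_{*}\mathcal{O}_X(mK_{X/Y}+D)\otimes\mathcal{O}_Y(cH)\bigr)=H^0\bigl(X,\ (m+(c+1)m_0)K_X-f^{*}H\bigr).
\]

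The missing idea is to pay for the uniform twist $cH$ with a fixed multiple of the pseudo-effective class. Meanwhile the growing part $mK_{X/Y}$ carries the Narasimhan--Simha/P\u{a}un--Takayama metric, which makes the $L^2$ condition on $F$ automatic. No Hodge modules are needed.
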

\begin{remark}\label{kltremarks}
Clearly, the proof of Theorem~\ref{schnellthm} in \cite{schnellcampanaconj} works in the klt setting.
\end{remark}
Now we state the Catanese--Fujita--Kawamata decomposition theorem for pushforward sheaves, proved in \cite{catanese2}, \cite{catanese3}, \cite{fujitapaper}, and \cite{Schnell-lombardi}. 
\begin{theorem}[{\cite[Theorem 1.2]{Schnell-lombardi}}]\label{fujitadecom}
     Let \( f: X \to Y \) be a surjective morphism, and let \( (X, \Delta) \) be a klt pair. Then the torsion free sheaf \( f_{*}\mathcal{O}_{X}(m(K_{X/Y} + \Delta)) \)  admits the Catanese-Fujita-Kawamata decomposition
    \[
    f_{*}\mathcal{O}_{X}(m(K_{X/Y} + \Delta)) = \mathcal{A} \oplus \mathcal{U}\text{.}
    \]
    where $\mathcal{A}$ is a generically ample sheaf and $\mathcal{U}$ is a Hermitian flat bundle.
\end{theorem}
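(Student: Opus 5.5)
This is the Catanese--Fujita--Kawamata decomposition, which the paper quotes from \cite{Schnell-lombardi}; here is how I would reconstruct it from the positivity results above. Set $\mathcal{F} = f_{*}\mathcal{O}_X(m(K_{X/Y}+\Delta))$.

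\textbf{Step 1: a semi-positive metric.} By Variant~\ref{variant3}, with $D=0$, the sheaf $\mathcal{F}$ carries a singular Hermitian metric $h$ with $i\Theta_h(\mathcal{F})\geq 0$. In particular $\mathcal{F}$ is weakly positive, so every torsion-free quotient of $\mathcal{F}$ has pseudo-effective determinant.

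\textbf{Step 2: locating the flat part.} I would use the Harder--Narasimhan filtration with respect to a fixed ample polarization $A$ (slopes computed on complete-intersection curves). By Step 1, all HN slopes are $\geq 0$. Let $\mathcal{F}_{0}\subset\mathcal{F}$ be the maximal destabilizing piece with all slopes $>0$, and let $\mathcal{Q}=\mathcal{F}/\mathcal{F}_{0}$, which is semistable of slope $0$. Then:
\begin{itemize}
\item $\mathcal{Q}$ inherits a quotient metric of semi-positive curvature, and $\deg_A\det\mathcal{Q}=0$ with $\det\mathcal{Q}$ pseudo-effective, so $\det\mathcal{Q}\equiv 0$.
\item Applying Proposition~\ref{caoprop2} on the locally free locus $U$ (whose complement has codimension $\geq 2$), $\mathcal{Q}|_U$ is Hermitian flat. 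Hence it comes from a unitary representation of $\pi_1(U)=\pi_1(Y_{\mathrm{reg}}\cap\dots)$ and extends to a flat bundle $\mathcal{U}$ on $Y$.
\end{itemize}

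\textbf{Step 3: splitting.} This is the main obstacle: showing that $0\to\mathcal{F}_0\to\mathcal{F}\to\mathcal{Q}\to0$ splits. I would argue as follows.
\begin{itemize}
\item Dualize and use that the metric on $\mathcal{F}$ restricted to the flat quotient is smooth, so the second fundamental form of $\mathcal{F}_0\subset\mathcal{F}$ pairs against a flat metric.
\item Curvature decreasing in quotients, together with $\det\mathcal{Q}\equiv 0$, forces the second fundamental form to vanish. This is the Hodge-theoretic argument of Fujita/Catanese--Dettweiler, which uses the variation of Hodge structure behind the $m$-th power via the cyclic-cover trick for $m(K_{X/Y}+\Delta)$.
\item Equivalently, the flat unitary sections lift holomorphically, giving $\mathcal{F}=\mathcal{A}\oplus\mathcal{U}$ on $U$ and hence, by reflexivity, on $Y$.
\end{itemize}
For the $m$-th power case I expect I need the Schnell--Lombardi twist: realize $\mathcal{F}$ as a direct summand of $g_*\omega$ for a cyclic cover, then apply the Hodge module decomposition there.

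\textbf{Step 4: generic ampleness of $\mathcal{A}=\mathcal{F}_0$.} Its HN slopes are positive, but positivity of slopes alone does not suffice. I would iterate instead:
\begin{itemize}
\item Any quotient of $\mathcal{A}$ restricted to a general complete-intersection curve $C$ is semi-positive by Step 1.
\item If some quotient of $\mathcal{A}|_C$ had degree $0$, the same flatness argument would produce a flat summand of $\mathcal{A}$, contradicting the maximality of $\mathcal{U}$.
\item Hence, by Hartshorne's criterion on curves, $\mathcal{A}|_C$ is ample, i.e.\ $\mathcal{A}$ is generically ample.
\end{itemize}
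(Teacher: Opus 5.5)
You are right that the paper gives no proof of this statement; it quotes it from \cite{Schnell-lombardi}. Your Harder--Narasimhan reduction is consistent with the statement: if $\mathcal{F}=\mathcal{A}\oplus\mathcal{U}$, then $\mathcal{A}$ is exactly your $\mathcal{F}_0$ and $\mathcal{U}$ is the slope-zero quotient. Steps~1--2 also correctly show that $\mathcal{Q}$ is hermitian flat on the open set $U$ where it is locally free. But all the content of the theorem sits in Step~3, and there the argument does not go through.

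First, the second-fundamental-form argument needs a \emph{smooth} metric on $\mathcal{F}$. (Also, curvature increases in quotients and decreases in subbundles. In the smooth case it is the quotient formula, $i\Theta_{\mathcal{Q}}=i\Theta_{\mathcal{F}}|_{\mathcal{Q}}$ plus a semi-positive term in $\beta$, that forces $\beta=0$.) Proposition~\ref{caoprop2} gives smoothness only for $\mathcal{Q}$, whose determinant is numerically trivial. It says nothing about the metric on $\mathcal{F}$, whose determinant is not numerically trivial once $\mathcal{F}_0\neq 0$. That metric is in general genuinely singular, so $\beta$ is not even defined.

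Second, the fallback to Hodge theory is not available for $m\geq 2$ or with $\Delta$. The cyclic covering trick only exhibits a subsheaf of a twist $\mathcal{F}\otimes\mathcal{O}_Y(H)$ as a summand of a Hodge bundle, with $H$ ample on $Y$; the twist is needed to make $(m-1)(K_{X/Y}+\Delta)+f^{*}H$ $\mathbb{Q}$-effective. That twist destroys exactly the degree-zero part you want to split off.

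There is also a more basic gap. You only use that $\mathcal{F}$ is torsion free with a semi-positively curved singular metric, and with that input alone the statement is false. On a surface $Y$, take $\mathcal{F}=\mathfrak{m}_p\subset\mathcal{O}_Y$ with the metric induced by the trivial one on $Y\setminus\{p\}$. This metric is flat. Steps~1--2 give $\mathcal{F}_0=0$ and $\mathcal{U}=\mathcal{O}_Y$, and your last line (``hence, by reflexivity, on $Y$'') would give $\mathfrak{m}_p\cong\mathcal{O}_Y$. In fact $\mathfrak{m}_p$ admits no decomposition $\mathcal{A}\oplus\mathcal{U}$ at all.

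The projection $\mathcal{F}\to\mathcal{U}$ does extend across codimension two, because $\mathcal{U}$ is locally free. The splitting map $\mathcal{U}|_U\to\mathcal{F}|_U$, however, does not extend without further information on $\mathcal{F}$. Neither local freeness of $\mathcal{Q}$ nor reflexivity of $\mathcal{F}$ follows from Variant~\ref{variant3}.

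The missing ingredient is the \emph{minimal extension property} of the canonical metric on $f_*\mathcal{O}_X(m(K_{X/Y}+\Delta))$ from \cite{HSP}. This is an Ohsawa--Takegoshi type $L^2$ condition, and $\mathfrak{m}_p$ with its flat metric fails it. It passes to quotients. By Hacon--Popa--Schnell, a torsion-free sheaf with numerically trivial $c_1$ carrying such a metric is locally free and hermitian flat. This $L^2$ input, rather than Hodge theory, is what the analytic proof in \cite{Schnell-lombardi} relies on.

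Step~4, by contrast, is easier than you make it. By Mehta--Ramanathan, the HN filtration of $\mathcal{A}$ restricts to that of $\mathcal{A}|_C$ for a general complete-intersection curve $C$ of large degree. Hence $\mu_{\min}(\mathcal{A}|_C)>0$, and $\mathcal{A}|_C$ is ample by Hartshorne's criterion. Your version instead tacitly assumes that a degree-zero quotient of $\mathcal{A}|_C$ comes from a quotient of $\mathcal{A}$ on $Y$.
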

\par \subsection{Fujino-Mori canonical bundle formula} During the proofs, we will apply the Fujino--Mori canonical bundle formula.
\begin{theorem}[{\cite{FujinoGongyo}}, {\cite[Theorem~4.5]{fujinomori}}]\label{fujinomoricbf}
Let $f \colon X \to Y$ be an algebraic fiber space. Let $(X,\Delta)$ be a klt pair,  and let $F$ be a general fiber of $f$ such that
\[
\kappa\bigl(K_F + \Delta|_{F}\bigr) = 0.
\]
Then there exists a commutative diagram
\[
\begin{tikzcd}
\widetilde{X} \arrow[r, "\widetilde{f}"] \arrow[d, "\sigma"] &
\widetilde{Y} \arrow[d, "\tau"] \\
X \arrow[r, "f"] & Y
\end{tikzcd}
\]
with the following properties:
\begin{itemize}
    \item[(1)] $\widetilde{f}$ is an algebraic fiber space, and $\sigma$ and $\tau$ are birational morphisms.
    
    \item[(2)] There exists a $\mathbb{Q}$-divisor $\widetilde{\Delta}$ such that $(\widetilde{X}, \widetilde{\Delta})$ is a klt pair and
    \[
    \sigma_{*}\bigl(p(K_{\widetilde{X}} + \widetilde{\Delta})\bigr)
    =
    p(K_X + \Delta)
    \]
    for a sufficiently large integer $p \in \mathbb{N}$.
    
    \item[(3)] There exist a $\mathbb{Q}$-effective divisor $B$ and a nef $\mathbb{Q}$-divisor $L$ on $\widetilde{Y}$ such that $(\widetilde{Y}, B+L)$ is a klt pair. Moreover, there exists a $\mathbb{Q}$-divisor $R = R^{+} - R^{-}$ on $\widetilde{X}$, decomposed into its positive and negative parts, such that
    \[
    p(K_{\widetilde{X}} + \widetilde{\Delta})
    =
    p\,\widetilde{f}^{*}(K_{\widetilde{Y}} + B + L) + R,
    \]
    for a sufficiently large integer $p \in \mathbb{N}$.
    
    \item[(4)] $\widetilde{f}_{*}\mathcal{O}_{\widetilde{X}}(iR^{+})
    =
    \mathcal{O}_{\widetilde{Y}}$ for any $i \in \mathbb{N}$.
    
    \item[(5)] The divisor $R^{-}$ is exceptional over $X$, and the codimension of
    \[
    \widetilde{f}\bigl(\operatorname{Supp} R^{-}\bigr)
    \]
    in $\widetilde{Y}$ is at least $2$.
\end{itemize}
\end{theorem}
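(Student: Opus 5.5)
The statement just given is the Fujino--Mori canonical bundle formula (Theorem~\ref{fujinomoricbf}), which the paper quotes from \cite{FujinoGongyo} and \cite[Theorem~4.5]{fujinomori}. I would recover it by the standard construction, proceeding as follows.

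\textbf{Step 1: reduce to a fibration with good structure.} Since $\kappa(K_F+\Delta|_F)=0$, the sheaf $f_*\mathcal{O}_X(p(K_X+\Delta))$ is a rank one torsion-free sheaf for $p$ sufficiently divisible. By flattening and taking log resolutions I would pass to a diagram $\sigma\colon\widetilde X\to X$, $\tau\colon\widetilde Y\to Y$, with $\widetilde f$ an algebraic fiber space, and arrange the following:
\begin{itemize}
\item every $\widetilde f$-exceptional divisor is either $\sigma$-exceptional or maps to a codimension $\ge 2$ locus;
\item the discriminant locus is a simple normal crossing divisor.
\end{itemize}
Writing $K_{\widetilde X}+\widetilde\Delta=\sigma^*(K_X+\Delta)+E$, with $E$ effective and exceptional and $\widetilde\Delta$ having coefficients $<1$, gives the klt pair $(\widetilde X,\widetilde\Delta)$ and property (2).

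\textbf{Step 2: split off the horizontal part.} Using the rank one condition, choose $R$ so that $p(K_{\widetilde X}+\widetilde\Delta)-R$ is a pullback $\widetilde f^*M$. The horizontal part of $R^+$ is the fixed effective divisor on $F$ realizing $h^0=1$. Its vertical part is then arranged, by subtracting a pullback, so that over each codimension one point it does not contain a full fiber. These two facts give $\widetilde f_*\mathcal{O}(iR^+)=\mathcal{O}_{\widetilde Y}$, which is (4). The negative part $R^-$ comes only from exceptional corrections and so satisfies (5).

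\textbf{Step 3: identify $M$.} Define the discriminant $B=\sum(1-c_P)P$ via log canonical thresholds $c_P$ over prime divisors $P$. Then set $L:=\frac1p M-K_{\widetilde Y}-B$, the moduli part.

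\textbf{Step 4: properties of $B$ and $L$.} Klt-ness of $(\widetilde Y,B)$ follows from klt-ness of $(\widetilde X,\widetilde\Delta)$. The main obstacle is the nefness of $L$, which I would not try to prove from scratch. Instead I would invoke Ambro's and Fujino--Mori's result: after base change to a suitable finite cover and passing to a semistable model, $L$ becomes the Hodge line bundle of a variation of Hodge structure (via the index one cover of $F$). Nefness then follows from Fujita--Kawamata semipositivity. Since $L$ is nef, it is $\mathbb{Q}$-linearly equivalent to a general effective divisor with small coefficients after adding an ample perturbation. I note that the statement asserts only that $L$ is nef, so in fact no ample perturbation is needed for $L$ itself. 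Hence $(\widetilde Y,B+L)$ is klt in the generalized (b-divisor) sense of \cite{FujinoGongyo}, which completes (3).
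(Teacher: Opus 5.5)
The paper gives no proof of this statement; it imports it from Fujino--Mori and Fujino--Gongyo. Your outline follows the Fujino--Mori construction and, like the paper, takes nefness of $L$ from the Hodge-theoretic semipositivity results, which is acceptable.

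\textbf{The gap is in (5).} The first property you list in Step~1 is a tautology: an $\widetilde f$-exceptional divisor is, by definition, one whose image in $\widetilde Y$ has codimension $\ge 2$. So it gives you nothing. Your justification that ``$R^-$ comes only from exceptional corrections'' also does not locate $R^-$, since the discrepancy divisor $E$ you introduce is effective and produces no negativity. The negative coefficients actually come from two sources.
\begin{enumerate}
\item[(i)] The rank one sheaf $\widetilde f_*\mathcal{O}_{\widetilde X}(p(K_{\widetilde X}+\widetilde\Delta))$ is invertible only off a closed set $W$ of codimension $\ge 2$. The section of $\mathcal{O}(p(K_{\widetilde X}+\widetilde\Delta)-\widetilde f^*M)$ is therefore only regular over $\widetilde Y\setminus W$ and may have poles along divisors lying over $W$.
\item[(ii)] Subtracting multiples of $\widetilde f^*P$ in Step~2 can make negative the coefficients of components of $\widetilde f^*P$ that do not dominate $P$.
\end{enumerate}
Both sources sit on divisors whose image has codimension $\ge 2$, which gives the second half of (5). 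For ``$R^-$ is exceptional over $X$'' you need the missing ingredient. Choose $\tau$ by flattening (Raynaud--Gruson/Hironaka), so that the main component of $X\times_Y\widetilde Y$ is flat, hence equidimensional, over $\widetilde Y$. Then every prime divisor of $\widetilde X$ whose image in $\widetilde Y$ has codimension $\ge 2$ is $\sigma$-exceptional. This property persists under the further blow-ups of $\widetilde Y$ needed to make the discriminant snc. Without it, nothing prevents a non-$\sigma$-exceptional divisor that $f$ contracts to a point or a curve from appearing in $R^-$.

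\textbf{Two smaller corrections in (3).} First, the thresholds must be $c_P=\operatorname{lct}$ of $\widetilde f^*P$ with respect to the sub-pair $(\widetilde X,\widetilde\Delta-\frac1pR)$ over the generic point of $P$. That is the fibration with $K_{\widetilde X}+\widetilde\Delta-\frac1pR\sim_{\Q}\widetilde f^*(K_{\widetilde Y}+B+L)$ to which the semipositivity argument applies; computing with $(\widetilde X,\widetilde\Delta)$ gives a different $B$ and no nefness for the resulting $L$. Klt-ness of $(\widetilde X,\widetilde\Delta)$ gives only $c_P>0$. Effectivity of $B$, that is $c_P\le 1$, comes from your Step~2 normalization. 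A component $\Gamma$ of $\widetilde f^*P$ dominating $P$ lies neither in $\operatorname{Supp}R^+$ nor in $\operatorname{Supp}R^-$, so its coefficient in $\widetilde\Delta-\frac1pR$ equals its coefficient in $\widetilde\Delta$, which is $\ge 0$.

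Second, drop the sentence about an ample perturbation. A nef $L$ is not in general $\Q$-linearly equivalent to an effective divisor; only $L+\varepsilon A$ is. The only sense of ``$(\widetilde Y,B+L)$ klt'' your argument supports is the generalized one: $(\widetilde Y,B)$ klt with $L$ nef. That is also how the paper uses the statement, since Theorem~\ref{fiberkod0} explicitly allows $L$ that is not numerically $\Q$-effective.
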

\subsection{Reduction map}The (numerical) effectivity of the moduli part in the canonical bundle formula plays an important role in verifying the Iitaka conjecture, since it implies in some way the effectivity of the direct image sheaf of the relative pluricanonical bundle. However, this numerical effectivity is not clear in certain situations. Instead, it is important to understand to what extent the moduli part is nef. For this reason, we recall the notions of numerical dimension and nef dimension of divisors. By the work of many authors, one can construct a map, called the pseudo-effective (nef) reduction map, which measures the directions along which the divisor is numerically trivial.

\begin{defn}\label{nefdimension}
Let $D$ be a pseudo-effective $\mathbb{Q}$-Cartier divisor on a smooth projective variety $X$ of dimension $n$. The numerical dimension $\nu(D)$ is defined by
\[
\nu(D)
:=
\max \left\{
k \in \mathbb{N}
\;\middle|\;
\limsup_{m \to +\infty}
\frac{h^{0}\!\left(X, \mathcal{O}_{X}(\lfloor mD \rfloor + A)\right)}{m^{k}}
> 0
\right\},
\]
for any sufficiently ample divisor $A$.
If $D$ is nef, then the above quantity is equivalent to
\[
\nu(D)
:=
\max \left\{
k \in \mathbb{N}
\;\middle|\;
D^{k} \cdot A^{n-k} \neq 0
\right\}
\]
for some (hence any) ample divisor $A$.
\end{defn}
Recall that $\kappa(D) \leq \nu(D) \leq \dim X$, and that $D$ is big if and only if $\nu(D) = \dim X$. 
In order to recall the definition of the reduction map in the general setting, we need to use the divisorial Zariski decomposition. For a pseudo-effective divisor $D$ on a surface, there exists a classical decomposition of $D$ into a positive part and a negative part satisfying a precise orthogonality property, this is due to Zariski and Fujita. In higher dimensions, this theory was generalized by Nakayama \cite{nakayamabook} and Boucksom \cite{boucksom}.
\begin{defn}
Let $X$ be a smooth projective variety, and let $D$ be a pseudo-effective $\mathbb{Q}$-Cartier divisor on $X$. Fix an ample divisor $A$ on $X$.  
For a prime divisor $\Gamma \subset X$, define
\[
\sigma_\Gamma(D)
\coloneqq
\min \left\{
\operatorname{mult}_\Gamma(D')
\;\middle|\;
D' \ge 0,\;
D' \sim_{\mathbb{Q}} D + \varepsilon A \text{ for some } \varepsilon > 0
\right\}.
\]
This definition is independent of the choice of $A$.
\end{defn}
There are only finitely many prime divisors $\Gamma$ such that $\sigma_\Gamma(D) > 0$. Hence we have the following definition.
\begin{defn}[{\cite[Definition 2.13]{Briannlehmann}}]
Let $X$ be a smooth projective variety, and let $D$ be a pseudo-effective $\mathbb{Q}$-Cartier divisor on $X$. Define the $\mathbb{Q}$-Cartier divisors
\[
N_\sigma(D) \coloneqq \sum_{\Gamma} \sigma_\Gamma(D)\,\Gamma
\quad\text{and}\quad
P_\sigma(D) \coloneqq D - N_\sigma(D),
\]
where the sum is taken over all prime divisors $\Gamma \subset X$.

The decomposition
\[
D = P_\sigma(D) + N_\sigma(D)
\]
is called the \emph{divisorial Zariski decomposition} of $D$.
\end{defn}
Recall that $\kappa(D) = \kappa\bigl(P_\sigma(D)\bigr)$ and $\nu(D) = \nu\bigl(P_\sigma(D)\bigr)$. Moreover, the support of the negative part $N_\sigma(D)$ coincides with the divisorial component of the diminished base locus of $D$.

\begin{theorem}[{\cite[Theorem 1.3]{Briannlehmann}}]\label{reductionmap}
Let $Y$ be a smooth projective variety, and let $D$ be a pseudo-effective $\mathbb{Q}$-Cartier divisor on $Y$. 
There exist a birational morphism $\varphi \colon \bar{Y} \to Y$ and a surjective morphism
\[
\pi \colon \bar{Y} \to Z
\]
with connected fibers satisfying the following properties:
\begin{enumerate}
\item For a general fiber $G$ of $\pi$, one has
\[
P_\sigma(\varphi^{*}D)\big|_{G} \equiv 0.
\]

\item The pair $(\bar{Y}, \pi)$ is the maximal quotient satisfying {\rm(1)} in the following sense:
if $\varphi' \colon Y' \to Y$ is a birational morphism and
$\pi' \colon Y' \to Z'$ is a surjective morphism with connected fibers
satisfying {\rm(1)}, then there exists a dominant rational map
\[
\psi \colon Z' \dashrightarrow Z
\]
such that $\pi$ is birationally equivalent to $\psi \circ \pi'$.
\end{enumerate}

The pair $(\bar{Y}, \pi)$ is determined up to birational equivalence and depends only on
the numerical class of $D$. The induced rational map
\[
\pi \circ \varphi^{-1} \colon Y \dashrightarrow Z
\]
is called the \emph{pseudo-effective reduction map} associated to $D$.
\end{theorem}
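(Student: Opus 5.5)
This statement is quoted from Lehmann (\cite[Theorem 1.3]{Briannlehmann}), so the plan is to reconstruct the pseudo-effective reduction map along the lines of the nef reduction map of Bauer et al., replacing $D$ by the positive parts $P_\sigma(\varphi^*D)$ on all birational models.

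\textbf{Step 1: an equivalence relation by connecting curves.} For a pseudo-effective class, restriction to subvarieties is not well behaved, so I would test on moving curves instead. Call two very general points $y,y'\in Y$ equivalent if they can be joined by a connected chain of curves $C$ with $P_\sigma(\varphi^*D)\cdot \widetilde C=0$ on some birational model $\varphi$, where $\widetilde C$ is the strict transform. The first check is that this is independent of the chosen model. It should be, because the negative part $N_\sigma$ is compatible with pullback: $N_\sigma(\varphi^*D)\ge \varphi^*N_\sigma(D)$, and the discrepancy is $\varphi$-exceptional. Curves through very general points avoid both the exceptional locus and $\operatorname{Supp}N_\sigma$.

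\textbf{Step 2: construct $\pi$.} I would apply the standard quotient theorem for families of curves (Campana, and Kollár's rationally connected quotient construction). Consider the countably many families of $P_\sigma$-trivial curves through very general points, and take the equivalence relation they generate. This produces an almost holomorphic rational map $Y\dashrightarrow Z$ whose very general fibers are the equivalence classes. Resolving it gives $\varphi\colon\bar Y\to Y$ and a morphism $\pi\colon\bar Y\to Z$ with connected fibers.

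\textbf{Step 3: property (1), the main obstacle.} We must show $P_\sigma(\varphi^*D)|_G\equiv 0$ on a general fiber $G$. Step 2 only gives that $G$ is connected by $P$-trivial curves, which is strictly weaker than numerical triviality of the restriction. For nef $D$ this gap is closed by the "chain-connected implies numerically trivial" lemma: a nef divisor that is zero on a covering family connecting any two points is numerically trivial. Here $P=P_\sigma$ is only movable, so I would argue as follows.
\begin{itemize}
\item First, $P|_G$ is pseudo-effective and its $N_\sigma$ part vanishes for general $G$.
\item Second, a movable class on $G$ that is zero on a connecting family of movable curves has $\nu(P|_G)=0$, by Nakayama's characterization of $\nu=0$ through movable curves and duality (BDPP).
\item Third, $\nu=0$ together with $N_\sigma(P|_G)=0$ forces $P|_G\equiv 0$.
\end{itemize}
Controlling restrictions of $P_\sigma$ to very general fibers (restriction of the $\sigma$-decomposition) is the delicate part of this step.

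\textbf{Step 4: maximality and uniqueness.} Suppose $\pi'\colon Y'\to Z'$ satisfies (1). Then every fiber of $\pi'$ has $P|_{G'}\equiv0$. Its curves are therefore $P$-trivial, so the fibers of $\pi'$ lie inside equivalence classes, and $\pi$ factors rationally through $\pi'$, giving $\psi\colon Z'\dashrightarrow Z$. Uniqueness up to birational equivalence follows from this factorization applied in both directions. Dependence only on the numerical class of $D$ is clear, since $P_\sigma$ and curve intersections depend only on $[D]$.
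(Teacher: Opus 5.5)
The paper gives no proof of this statement; it is quoted from Lehmann. So your sketch has to stand on its own, and it has a genuine gap exactly where you flag the difficulty. The quotient construction in Step~2 and the maximality argument in Step~4 are fine in outline. Step~3 is the whole content of the theorem, and its second bullet does not follow from Nakayama and BDPP.

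\textbf{Why the cited tools do not give Step~3.} Duality lets you conclude that a pseudo-effective class is numerically trivial from its vanishing on a movable class $\gamma$ only when $\gamma$ lies in the interior of the movable cone of curves. Chain-connecting families need not supply such a class.
\begin{itemize}
\item On $\mathrm{Bl}_p\mathbb{P}^2$, two general points lie on a line missing $p$. Yet the class $H$ of these curves lies on the boundary of the movable cone, since $E_p\cdot H=0$ for the exceptional curve $E_p$. So duality alone cannot do the job.
\item Vanishing on a covering family does not give $\nu=0$ either: $\mathrm{pr}_1^*\mathcal{O}(1)$ on $\mathbb{P}^1\times\mathbb{P}^1$ kills the fibres of $\mathrm{pr}_1$ but has $\nu=1$.
\end{itemize}
Chain-connectedness therefore has to be used in an essential way, and the proposal never says how.

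\textbf{Why the nef argument does not transfer.} In the nef case, connectedness is exploited by moving and degenerating chains. To reach an arbitrary curve of $G$ from a very general point, one is forced through special and reducible members of the families. Nefness is exactly what makes every irreducible component of an $L$-trivial cycle again $L$-trivial. For $P=P_\sigma(\varphi^*D)$, which is only movable, components lying in $\mathbf{B}_-(P)$ can have negative degree. For example, the strict transform of an ample divisor across a flop is movable and negative on the flopping curve. Hence triviality of the general members gives no control on $P\cdot B$ for curves $B\subset G$, and $P|_G\equiv 0$ remains unproved. Without it, your curve quotient could a priori be strictly coarser than every fibration satisfying (1). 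Step~4 would then prove maximality of a map that does not itself satisfy (1).

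\textbf{Step~1 also rests on a false claim.} Curves through very general points are not \emph{contained} in $\operatorname{Exc}(\varphi)$ or $\operatorname{Supp}N_\sigma$, but in general they do meet them. Set $E:=N_\sigma(\varphi^*D)-\varphi^*N_\sigma(D)$, which is effective and $\varphi$-exceptional. Then $P_\sigma(\varphi^*D)\cdot\widetilde C=P_\sigma(D)\cdot C-E\cdot\widetilde C$. So $P_\sigma$-triviality of a curve depends on the model and only becomes weaker on higher models. In particular, a fibre $G$ of your $\pi$ on $\bar Y$ may be connected by curves that are $P_\sigma$-trivial only on some higher model. Step~3 would also have to handle this, at the least by taking $\bar Y$ to dominate the finitely many models involved.

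In short, what is missing is a genuine analogue of the nef chain lemma for movable classes. Alternatively, you need a construction that bypasses $P_\sigma$-trivial curves and works directly with fibrations satisfying (1).
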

If $D$ is nef, then the nef dimension $\eta(D)$ of $D$ is equal to $\dim Z$ by definition. 
The pseudo-effective reduction map above was first defined rigorously in \cite{Briannlehmann}. 
For important reduction steps, it is useful to understand in which directions the divisor is numerically trivial, and whether it is numerically equivalent to a divisor pulled back from the base. 
Hence, we recall the following theorem.
\begin{theorem}[{\cite[Theorem~5.3]{lehmannreduction2}}]\label{fibertrivial}
Let $f \colon Y \to Z$ be a surjective morphism of smooth projective varieties with connected fibers. 
Suppose that $D$ is a pseudo-effective $\mathbb{Q}$-Cartier divisor such that
\[
P_\sigma(D)\big|_F \equiv 0
\]
for a general fiber $F$ of $f$.

Then there exist a smooth birational model
$\varphi \colon Y' \to Y$, a morphism
$g \colon Y' \to Z'$ birationally equivalent to $f$, 
and a $\mathbb{Q}$-Cartier divisor $D'$ on $Z'$ such that
\[
P_\sigma(\varphi^* D) \equiv P_\sigma(g^* D').
\]
\end{theorem}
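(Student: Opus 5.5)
The statement is taken from \cite{lehmannreduction2}. The plan I would follow to prove it has three steps: make $f$ as nice as possible, descend the class of $P_\sigma(D)$ over the locus where $f$ is smooth, and control what is left over using the divisorial Zariski decomposition. Write $P:=P_\sigma(D)$.

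\textbf{Step 1: modify $f$.} By flattening (Raynaud--Gruson) followed by resolution of singularities, I would pick a smooth birational model $\varphi\colon Y'\to Y$ and a morphism $g\colon Y'\to Z'$, birationally equivalent to $f$, such that:
\begin{itemize}
\item $Z'$ is smooth;
\item every $g$-exceptional prime divisor, i.e.\ every prime divisor $\Gamma$ with $\operatorname{codim} g(\Gamma)\ge 2$, is also $\varphi$-exceptional.
\end{itemize}
The transfer of the hypothesis is then formal. We have $\varphi^*P=P_\sigma(\varphi^*D)+E$ with $E\ge 0$ and $\varphi$-exceptional, and $P_\sigma(\varphi^*D)$ is movable. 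A general fiber $G$ of $g$ maps birationally onto a general fiber of $f$. Hence $P_\sigma(\varphi^*D)|_G$ is pseudo-effective and is dominated by $\varphi^*P|_G\equiv 0$. Intersecting with curves that cover $G$ and avoid $E$ then gives $P':=P_\sigma(\varphi^*D)$ with $P'|_G\equiv 0$.

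\textbf{Step 2: descend over the smooth locus.} Let $Z'_0\subset Z'$ be the open set over which $g$ is smooth. After replacing $P'$ by a multiple, I would show that $\mathcal{O}(P')|_G$ lies in $\operatorname{Pic}^0(G)$ modulo torsion. Then I would twist by a line bundle $T$ with numerically trivial class (coming from $\operatorname{Pic}^0(Y')$ together with a relative $\operatorname{Pic}^0$ argument) so that $\mathcal{O}(P')\otimes T$ is trivial on the fibers over $Z'_0$. Then $g_*(\mathcal{O}(P')\otimes T)$ is a line bundle $M$ on $Z'_0$ whose pullback recovers $\mathcal{O}(P')\otimes T$ there. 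Extending $M$ to a $\mathbb{Q}$-Cartier divisor $D'$ on $Z'$ gives
\[
P'\equiv g^*D' + V,
\]
where $V$ is a divisor supported over $Z'\setminus Z'_0$, i.e.\ a vertical divisor.

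I expect this step to be the main obstacle. Numerical triviality on a general fiber only gives algebraic triviality up to torsion. The relative Picard scheme may be non-reduced or non-separated over $Z'_0$, so it must be handled carefully (shrinking $Z'_0$ and using monodromy). What I would try first is to work purely with numerical classes: use the exact sequence relating $N^1(Y'/Z')$ to the classes of vertical divisors and $g^*N^1(Z')$. This is available because $g$ has connected fibers and $Z'$ is smooth.

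\textbf{Step 3: absorb the vertical part.} Decompose $V=V_1+V_2$, where:
\begin{itemize}
\item $V_1$ consists of components mapping onto divisors of $Z'$;
\item $V_2$ is $g$-exceptional.
\end{itemize}
Over each prime divisor $\Theta\subset Z'$, subtract the largest rational multiple of $g^*\Theta$ that keeps $V_1$ supported in partial fibers, and absorb it into $D'$. Zariski's lemma for fibrations (the negativity lemma on the fibres over the generic point of $\Theta$) then shows two things:
\begin{itemize}
\item pseudo-effectivity of $P'$, tested against movable curves lying in these fibers, forces the remaining part of $V$ to be effective;
\item the remaining part of $V$ is very exceptional over $Z'$ (it misses some component of the fiber over each divisor).
\end{itemize}
By Nakayama's lemma, very exceptional effective divisors are contained in $N_\sigma$. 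Since $P_\sigma$ depends only on the numerical class, this gives $P_\sigma(P')=P_\sigma(g^*D')$. Finally $P'$ is movable, so $P_\sigma(P')=P'$, and we conclude $P_\sigma(\varphi^*D)\equiv P_\sigma(g^*D')$, as claimed.
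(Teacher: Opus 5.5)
The paper gives no proof of this statement (it is quoted from Lehmann), so your proposal has to stand on its own. It has a genuine gap in Step~2. That step uses only $P'|_G\equiv 0$ and never the positivity of $P'$. But its target, $P'\equiv g^*D'+V$ with $V$ vertical, is false for divisors that are merely numerically trivial on fibres. Take $g\colon Y'\to\mathbb{P}^1$ to be the rational elliptic surface obtained by blowing up the nine base points of a general pencil of plane cubics. Its fibre class is $F=3H-\sum E_i$, all fibres are irreducible, and the $E_i$ are sections. The divisor $L=E_1-E_2$ has degree $0$ on every fibre, so it is even $g$-numerically trivial. However, $L\equiv g^*D'+V$ would force $L\equiv aF$, and intersecting with $E_1$ and with $E_2$ gives $a=-1$ and $a=1$.

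Here $\operatorname{Pic}^0(Y')=0$, so no twist $T$ exists. Fibrewise, $L$ is a section of infinite order of the relative Jacobian: a Mordell--Weil class, i.e.\ a nonzero component in $H^1(Z'_0,R^1g_*\mathbb{Q})$ of the Leray filtration. Shrinking $Z'_0$, torsion or monodromy arguments cannot remove it, so the obstacle you flagged is this, not the non-reducedness of the relative Picard scheme. Your numerical fallback fails for the same reason: by Shioda--Tate, $N^1(Y')$ modulo $g^*N^1(Z')$ and vertical classes contains $\mathrm{MW}\otimes\mathbb{R}$. This does not contradict the theorem, since $L\cdot(H-E_2)=-1$ with $H-E_2$ nef, so $L$ is not pseudo-effective. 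It does show that positivity must enter already in Step~2, whereas you use it only in Step~3 on vertical components.

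The missing ingredient is an argument that positivity kills this horizontal component. For a surface over a curve this is the Hodge index theorem. The positive part $M$ of the Zariski decomposition of $P'$ is nef with $M\cdot G=0=G^2$, so $M\equiv cG$. The negative part is vertical because it meets the nef class $G$ trivially. In general you can use a closed positive current $T$ in the class of $P'$:
\begin{itemize}
\item For almost every $z\in Z'_0$, $T|_{G_z}$ is a positive current in the zero class, hence $0$. Positivity then kills the mixed components of $T$, and $dT=0$ makes the remaining coefficients constant along fibres, so $T=g^*S$ over $Z'_0$.
\item Set $\bar S:=g_*(T\wedge\omega^k)/\int_G\omega^k$, with $\omega$ Kähler and $k=\dim G$. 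Then $T-g^*\bar S$ is a closed normal current supported on $g^{-1}(Z'\setminus Z'_0)$, hence a combination of vertical divisors by the support theorem.
\item Injectivity of $g^*$ on $H^2$ puts the class of $\bar S$ in $N^1(Z')_{\mathbb{R}}$. A rational $D'$ exists because the linear conditions are rational.
\end{itemize}
This yields $P'\equiv g^*D'+V$, after which your Step~3 goes through with one correction. Positivity does not obviously make the $g$-exceptional leftover effective, but you do not need that. By Step~1 its negative part is $\varphi$-exceptional, and adding an effective $\varphi$-exceptional divisor to $P'=P_\sigma(\varphi^*D)$ does not change $P_\sigma$.
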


\section{Algebraic fiber spaces over irregular Threefolds}
In this Section, we give a proof of Conjecture \ref{Conj1} in case of $q(Y)>0$. We recall the definition of Albanese dimension.
\begin{defn}
     We define the Albanese dimension $\alpha(X)$ of an irregular variety $X$ by 
    \[
    \alpha(X):= \dim \operatorname{alb}_{X}(X).
    \]
    Here, $\operatorname{alb}_{X}(X)$ is the image of the Albanese map $\operatorname{alb}_{X}: X \to \operatorname{alb}_{X}(X) \subseteq \operatorname{Alb}(X)$, where $\operatorname{Alb}(X)$ is the Albanese variety.
\end{defn}
In \cite{Houaricnm}, we proved the following Theorem. 
\begin{theorem}[{\cite[Theorem 1.2]{Houaricnm}}]\label{maintheorem}
Iitaka conjecture $C_{n,m}$ holds when $\alpha(Y) \geq m-2$.
\end{theorem}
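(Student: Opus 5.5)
We would argue by reducing to the two cases where the Iitaka inequality is already known: fiber spaces over bases of dimension at most $2$ (Kawamata's $C_{n,1}$ and Cao's $C_{n,2}$), and fiber spaces over varieties of maximal Albanese dimension, controlled via generic vanishing and the Catanese--Fujita--Kawamata decomposition (Theorem~\ref{fujitadecom}). If $\kappa(Y)=-\infty$ there is nothing to prove, so assume $\kappa(Y)\ge 0$.

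\textbf{Setup.} Let $a\colon Y\to \operatorname{alb}_Y(Y)$ be the Albanese map and take its Stein factorization $Y\xrightarrow{h} V\to \operatorname{alb}_Y(Y)$, after passing to smooth birational models. Then $V$ has maximal Albanese dimension, $\dim V=\alpha(Y)\ge m-2$, and a general fiber $G$ of $h$ has dimension at most $2$. By Ueno's structure theorem, after a finite étale cover (which changes none of the Kodaira dimensions involved) we get the Ueno fibration $V\to W$. Here $W$ is of general type with $\dim W=\kappa(V)$, and its general fibers $K_w$ are abelian varieties.

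\textbf{Chain of inequalities.}
\begin{enumerate}
\item Since $W$ is of general type, Viehweg's theorem applied to the composite $X\to W$ gives $\kappa(K_X+\Delta)\ge \kappa(K_{X_w}+\Delta|_{X_w})+\kappa(V)$.
\item By easy addition for $Y\to W$, $\kappa(Y)\le \kappa(Y_w)+\kappa(V)$.
\item It therefore suffices to prove $\kappa(K_{X_w}+\Delta|_{X_w})\ge \kappa(K_F+\Delta|_F)+\kappa(Y_w)$. Here $X_w\to Y_w\to K_w$, the last base $K_w$ is an abelian variety, and the fibers $G$ of $Y_w\to K_w$ are curves or surfaces.
\end{enumerate}

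\textbf{The case over the abelian variety $K_w$.} Write $K_{X_w}+\Delta = (K_{X_w/Y_w}+\Delta)+f^*K_{Y_w}$ and fix $N$ with $NK_{Y_w}$ effective. Set $D=f^*(NK_{Y_w})$, a pseudo-effective divisor with a natural singular metric. By Variant~\ref{variant3}, the sheaf
\[
\mathcal{F}_m:=f_*\mathcal{O}_{X_w}\bigl(m(K_{X_w/Y_w}+\Delta)+D\bigr)
\]
carries a semi-positively curved singular metric. We push $\omega_{Y_w}\otimes\mathcal{F}_m$ further down to $K_w$. By the Cao--P\u{a}un argument, the resulting sheaf is GV and its positivity splits as in Theorem~\ref{fujitadecom}: a generically ample part, which produces sections after twisting by a torsion line bundle, and a Hermitian flat part, which Proposition~\ref{caoprop2} shows is trivial up to torsion. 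On a general fiber $G$, $C_{\cdot,2}$ (Cao) or $C_{\cdot,1}$ (Kawamata), applied to $X_{G}\to G$, gives sections of the restricted bundle whose growth realises $\kappa(K_F+\Delta|_F)+\kappa(G)$, and these have finite $L^2$ norm. Proposition~\ref{caoproposition}, applied to $Y_w\to K_w$ with $A_Z$ replaced by a torsion twist (the abelian base lets us absorb $cA_Z$ through the GV/torsion-translate argument), then extends them to global sections.

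\textbf{Main obstacle.} We expect the main obstacle to be the final comparison. Extension from $G$ only directly yields growth $\kappa(K_F+\Delta|_F)+\kappa(G)$, whereas step 3 requires $\kappa(Y_w)$, and $\kappa(Y_w)$ may exceed $\kappa(G)$ through the horizontal directions of $K_w$. The reason for twisting by $D=f^*(NK_{Y_w})$, rather than working with $K_{X_w/K_w}$ alone, is to carry those horizontal sections along. We would first check that the torsion-translate sections of the GV part of $h_*\omega_{Y_w}^{\otimes N}$ multiply with the sections extended from $G$. That would give the full growth $\kappa(K_F+\Delta|_F)+\kappa(Y_w)$, and combining with steps 1 and 2 would finish the proof.
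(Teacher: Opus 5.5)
The paper does not reprove Theorem~\ref{maintheorem}; it quotes it from \cite{Houaricnm}, so your argument has to stand on its own. Your first reduction is fine. Stein-factorizing the Albanese map, passing to the Ueno fibration $V\to W$, applying Viehweg over the general-type base $W$, and using easy addition for $Y\to W$ correctly reduce everything to proving $\kappa(K_{X_w}+\Delta|_{X_w})\ge\kappa(K_F+\Delta|_F)+\kappa(Y_w)$. Here $X_w\to Y_w\to K_w$ with $K_w$ abelian and the fibers $G$ of dimension at most $2$.

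This reduced statement is the entire content of the theorem, and it is not proved. The available results only give $\kappa(K_{X_w}+\Delta|_{X_w})\ge\kappa(K_{X_G}+\Delta|_{X_G})\ge\kappa(K_F+\Delta|_F)+\kappa(G)$: use Cao--P\u{a}un over the abelian variety $K_w$, then $C_{\cdot,2}$ or $C_{\cdot,1}$ on $G$. That yields only $\kappa(K_X+\Delta)\ge\kappa(K_F+\Delta|_F)+\kappa(G)+\kappa(V)$, and $\kappa(Y)$ can be strictly larger. For instance, take a non-isotrivial family of K3 surfaces $Y\to E$ over an elliptic curve. Then $q(Y)=\alpha(Y)=1=3-2$ and $\kappa(Y)=1$, while $\kappa(G)=\kappa(V)=0$. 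You name this yourself as the main obstacle and then only say what you ``would first check''. That unproved step is precisely the theorem.

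The suggested fix also does not work as described. The sections you extend live in $m(K_{X_w/Y_w}+\Delta)+(N+1)f^*K_{Y_w}$ with $N$ fixed. For products with sections of $N'K_{Y_w}$ (twisted by torsion) to give growth $\kappa(K_F+\Delta|_F)+\kappa(Y_w)$, these extended sections must restrict to a general $F$ with Iitaka dimension $\kappa(K_F+\Delta|_F)$.

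\begin{itemize}
\item On $X_G$ that divisor is $m(K_{X_G/G}+\Delta)+(N+1)f^*K_G$, which is not a multiple of $K_{X_G}+\Delta$. So neither Cao's $C_{\cdot,2}$ nor Kawamata's $C_{\cdot,1}$ says anything about its sections.
\item Growth $\kappa(K_F+\Delta|_F)+\kappa(G)$ on $X_G$ is not the quantity the product argument needs anyway.
\item You would need a twisted, fiberwise-effective strengthening of $C_{\cdot,2}$ on $G$, or some other way to turn the variation of $Y_w\to K_w$ into sections of multiples of $K_X+\Delta$. Neither is supplied.
\end{itemize}

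There are also secondary inaccuracies, repairable by citing the klt version of \cite{Caopaun} directly, though they do not close the main gap.
\begin{itemize}
\item The extension step is misquoted. Proposition~\ref{caoproposition} requires the ample twist $g^*(cA_Z)$ and a finite $L^2$ norm, and you cannot replace $A_Z$ by a torsion bundle.
\item The $L^2$ bound is not automatic. In the paper it comes from smoothness of the metric via Proposition~\ref{caoprop2}, which needs a case split on whether the determinant is numerically trivial along the fibers.
\item Proposition~\ref{caoprop2} yields Hermitian flatness, not triviality up to torsion. The torsion statement needs Simpson-type results on jump loci, as in \cite{Caopaun}.
\item Theorem~\ref{fujitadecom} concerns $f_*\mathcal{O}_X(m(K_{X/Y}+\Delta))$ on $Y$, not the pushforward of $\omega_{Y_w}\otimes\mathcal{F}_m$ to $K_w$.
\end{itemize}
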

\begin{remark}
In \cite{Houaricnm}, we considered a smooth variety $X$, instead of a Klt pair $(X, \Delta)$. However, the same proof given in \cite{Houaricnm} obviously works for the Klt pair $(X, \Delta)$.  
\end{remark}
Therefore, we can deduce the following Corollary.
\begin{corollary}\label{irregthm}
Iitaka conjecture $C_{n,3}$ holds provided that $q(Y)>0$.
\end{corollary}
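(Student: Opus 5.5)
The corollary should follow directly from Theorem~\ref{maintheorem} with $m = 3$. That theorem gives $C_{n,3}$ whenever $\alpha(Y) \geq 3 - 2 = 1$, so the only thing to check is that $q(Y) > 0$ forces $\alpha(Y) \geq 1$.

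Suppose $q(Y) = h^{0}(Y, \Omega^1_Y) > 0$. Then $\operatorname{Alb}(Y)$ is a nonzero abelian variety of dimension $q(Y)$. By the universal property of the Albanese map, $\operatorname{alb}_Y(Y)$ generates $\operatorname{Alb}(Y)$ as a group. Hence the image cannot be a point: a point generates only a trivial group, contradicting $\dim \operatorname{Alb}(Y) = q(Y) > 0$.

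Equivalently, every holomorphic $1$-form on $Y$ is pulled back from $\operatorname{Alb}(Y)$ via $\operatorname{alb}_Y$. If $\operatorname{alb}_Y$ were constant, all these pullbacks would vanish, contradicting $q(Y) > 0$. Therefore
\[
\alpha(Y) = \dim \operatorname{alb}_Y(Y) \geq 1 = \dim Y - 2 .
\]
Applying Theorem~\ref{maintheorem} with $m = 3$ to the klt pair $(X,\Delta)$ then gives
\[
\kappa(K_X + \Delta) \geq \kappa(K_F + \Delta|_F) + \kappa(Y),
\]
using the subsequent remark that the proof of Theorem~\ref{maintheorem} extends verbatim to klt pairs.

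There is no genuine obstacle here. The only point that deserves a sentence is the klt extension, which the paper has already asserted in the remark following Theorem~\ref{maintheorem}. The substantive content, handling bases with $\alpha(Y) \in \{1,2,3\}$, lives entirely in the proof of Theorem~\ref{maintheorem}.
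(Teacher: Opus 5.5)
Your proposal is correct and follows the paper's argument: $q(Y)>0$ forces $\alpha(Y)\ge 1 = 3-2$, and Theorem~\ref{maintheorem}, together with the remark that its proof extends to klt pairs, then gives $C_{n,3}$; you also justify the implication $q(Y)>0\Rightarrow\alpha(Y)\ge 1$, which the paper leaves implicit. The only difference is that the paper treats $\alpha(Y)=3$ (generically finite Albanese map) separately by citing \cite{Caopaun} and \cite{HSP}, and uses Theorem~\ref{maintheorem} only for $\alpha(Y)\in\{1,2\}$, whereas you apply Theorem~\ref{maintheorem} uniformly, which its stated hypothesis $\alpha(Y)\ge m-2$ allows.
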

\begin{proof}
If $\alpha(Y)=3$, then $\operatorname{alb}_{Y}$ is generically finite, and the result from \cite{Caopaun} and \cite{HSP}. If $\alpha(Y)=1,2$, we are in the situation of Theorem \ref{maintheorem}. Thus the theorem is proved.
\end{proof}
\section{Algebraic Fiber Spaces over Threefolds with General Fiber of Log Kodaira Dimension Zero}

In this section, we apply the canonical bundle formula to derive some results on Iitaka conjecture $C_{n,3}$, in the case where the general fiber has log Kodaira dimension zero. In particular, as a corollary, we almost deduce $C_{7,m}$.
\begin{theorem}\label{fiberkod0}
Let $(X, \Delta)$ be a klt pair, and let 
$f \colon X \to Y$
 be an algebraic fiber space, where $X$ and $Y$ are smooth projective varieties of dimensions $n$ and $3$, respectively. Let $F$ be a general fiber of $f$ such that 
\[
\kappa(K_F + \Delta|_F) = 0.
\] 
In the setting of Theorem~\ref{fujinomoricbf}, one has
\[
\kappa(K_X + \Delta) \ge \kappa(Y),
\] 
except maybe the following two cases, when $L$ is not numerically $\Q$-effective and
\begin{enumerate}
    \item $\nu(L) = 1$, $\eta(L) = 2$, and $q(Y) = 0$; or
    \item $L$ is almost strictly nef and $\widetilde{Y}$ is a Calabi--Yau threefold with $L \cdot c_2(\widetilde{Y}) = 0$.
\end{enumerate}
\end{theorem}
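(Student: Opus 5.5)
By Theorem~\ref{fujinomoricbf}, and using (4) and (5) to control $R^{+}$ and $R^{-}$, one gets
\[
\kappa(K_X+\Delta)=\kappa(K_{\widetilde{X}}+\widetilde{\Delta})=\kappa(K_{\widetilde{Y}}+B+L).
\]
Since $B\ge 0$ and $\kappa(\widetilde{Y})=\kappa(Y)$, it suffices to prove
\[
\kappa(K_{\widetilde{Y}}+B+L)\ge \kappa(\widetilde{Y}).
\]
I would first dispose of the easy cases. If $\kappa(Y)=-\infty$ there is nothing to prove. If $q(Y)>0$, Corollary~\ref{irregthm} gives the full inequality. If $L$ is numerically $\Q$-effective, i.e.\ $L\equiv E$ with $E\ge 0$, I would apply \cite[Theorem~0.1]{campanapaunkoziarz} to $(\widetilde{Y},B+E)$. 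That theorem says that for a klt pair, $\kappa$ depends only on the numerical class of $K+B+L$. Hence $\kappa(K_{\widetilde{Y}}+B+L)=\kappa(K_{\widetilde{Y}}+B+E)\ge\kappa(\widetilde{Y})$. So from now on I assume $L$ is nef but not numerically $\Q$-effective, $q(Y)=0$ and $\kappa(Y)\ge 0$, and I split according to $\nu(L)$ and $\eta(L)$.

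\textbf{Low numerical dimension.} If $\nu(L)=0$, then $L\equiv 0$, which is numerically effective. If $\eta(L)=\nu(L)$, I would use Theorems~\ref{reductionmap} and~\ref{fibertrivial} to write $L\equiv \pi^{*}L_Z$ on a model, with $\pi\colon \bar Y\to Z$ the nef reduction map. Then $L_Z$ is nef with $\nu(L_Z)=\dim Z$, hence big, hence numerically $\Q$-effective, and we are back in the effective case.

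\textbf{Remaining values of $(\nu,\eta)$.} With $\nu(L)\ge1$ and $\nu<\eta$, the possibilities are $(\nu,\eta)=(1,2)$, $(1,3)$ and $(2,3)$. The pair $(1,2)$ with $q=0$ is exactly exception (1). For $(1,3)$ and $(2,3)$, $L$ is almost strictly nef. Here I would run an MMP on $\widetilde{Y}$ (available for threefolds) and look for a good minimal model.
\begin{itemize}
\item If $\kappa(Y)>0$, the Iitaka fibration $Y\dashrightarrow W$ has fibers on which $L$ is still positive. I would then use Proposition~\ref{caoproposition} with $H$ a multiple of $L$ to extend sections from the Iitaka fibers, which gives $\kappa(K+B+L)\ge\kappa(Y)$.
\item If $\kappa(Y)=0$ and $q=0$, then after an étale cover $\widetilde{Y}$ is birationally Calabi--Yau, so $K_{\widetilde{Y}}\equiv 0$. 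I would compute $\chi(mL)$ by Riemann--Roch:
\[
\chi(mL)=\tfrac{m^{3}}{6}L^{3}+\tfrac{m}{12}\,L\cdot c_2(\widetilde{Y}).
\]
Since $L\cdot c_2\ge 0$ by Miyaoka, this is positive when $L^{3}>0$ or $L\cdot c_2>0$. Kawamata--Viehweg vanishing then gives $h^{0}(mL)>0$, i.e.\ $L$ is $\Q$-effective.
\end{itemize}
What is left is $L^{3}=0$ and $L\cdot c_2=0$ with $L$ almost strictly nef, which is exception (2).

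\textbf{Main obstacle.} I expect the hardest step to be the almost strictly nef case with $\kappa(Y)>0$. There one must show that $B+L$ contributes positively along the Iitaka fibers, or at least does not obstruct the inequality, using only nefness of $L$. My first attempt would be to restrict to a general Iitaka fiber, which is a curve or surface of Kodaira dimension zero. There, nef but not numerically effective divisors can be handled classically, and Proposition~\ref{caoproposition} then propagates the resulting sections to $\widetilde{Y}$.
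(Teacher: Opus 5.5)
\textbf{Correct steps, with one fix.} Your reductions follow the paper's Cases~1 and~2: $q(Y)>0$ via Corollary~\ref{irregthm}, numerically $\Q$-effective $L$, and $\eta(L)=\nu(L)$ via Theorems~\ref{reductionmap} and~\ref{fibertrivial}. You also correctly identify $(\nu,\eta)=(1,2)$ as exception~(1).

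The fix concerns \cite[Theorem~0.1]{campanapaunkoziarz}, which does not make $\kappa$ a numerical invariant. On an abelian variety $A$, $\kappa(K_A)=0$ but $\kappa(K_A+\rho)=-\infty$ for non-torsion $\rho\in\operatorname{Pic}^0(A)$. The theorem only gives $\kappa(K_X+\Delta)\ge\kappa(K_X+\Delta+M)$ for $M\equiv 0$ and $(X,\Delta)$ klt. Applied to $(\widetilde Y,B+E)$, which need not be klt, it yields the inequality in the wrong direction. The paper instead applies it on $X$ with $M=\sigma_*\widetilde f^{*}(E-L)$, where $\kappa(K_X+\Delta+M)=\kappa(K_{\widetilde Y}+B+E)\ge\kappa(Y)$. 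You should do the same.

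\textbf{The gap: the case $\eta(L)=3$, $\nu(L)\le 2$.} The paper settles this case with the Serrano-type theorem for almost strictly nef divisors on threefolds \cite{serranoconj}. That theorem says $K_{\widetilde Y}+tL$ is big unless $\widetilde Y$ is Calabi--Yau with $L\cdot c_2(\widetilde Y)=0$. Since $K_{\widetilde Y}$ is pseudo-effective once $\kappa(Y)\ge 0$, $K_{\widetilde Y}+B+L$ is then big, so $\kappa(K_X+\Delta)=3$ for every value of $\kappa(Y)\ge 0$. Your substitute does not replace this.

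\emph{Case $\kappa(Y)>0$.} You never say which sections on the Iitaka fibre are to be extended. Moreover, Proposition~\ref{caoproposition} requires $L^2$-integrability with respect to a singular semi-positive metric on a multiple of $L$. But $L$ is only nef and possibly not $\Q$-effective, so you have no control over such metrics. For $\kappa(Y)\ge 2$ no extension is actually needed: $L$ has positive degree on the general Iitaka fibre, and an intersection computation gives bigness. For $\kappa(Y)=1$ and $\nu(L)=1$, however, $L^2\equiv 0$ makes that computation fail, and your sketch gives nothing.

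\emph{Case $\kappa(Y)=0$.} $K_{\widetilde Y}$ is not numerically trivial, because $\widetilde Y$ is a blow-up of the minimal model. Pushing $L$ down to the minimal model need not keep it nef, so your Riemann--Roch formula is not available where $L$ lives. Even on a smooth Calabi--Yau, Kawamata--Viehweg vanishing is useless once $L^3=0$. For $\nu(L)=2$ you need the generalized vanishing $H^i(K+mL)=0$ for $i>3-\nu(L)$ to kill $h^2$. For $\nu(L)=1$, $h^2(mL)$ is not controlled at all, and one needs Oguiso's theorem \cite{oguisok}.
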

\begin{proof}
By Corollary~\ref{irregthm}, we may assume that $q(Y) = 0$. We argue depending on the nef dimension $\eta(L)$.

\medskip
\noindent
\textbf{Case 1:} $\eta(L) = 0$ or $1$.  

If $\eta(L) = 0$, then $L \equiv 0$. If $\eta(L) = 1$, then by \cite[Paragraph~2.4.4]{reductionmap}, the reduction map 
\[
g \colon \widetilde{Y} \to Z
\] 
is regular, where $Z$ is a smooth projective curve, and there exists a $\mathbb{Q}$-effective divisor $D$ on $Z$ with $\deg D > 0$ such that 
\[
L \equiv g^* D.
\] 
In either situation, $L \equiv D'$ for some $\mathbb{Q}$-effective divisor $D'$. We then set
\[
M := \sigma_* \widetilde{f}^*(D' - L),
\] 
and for sufficiently divisible $p$ and $i$, we have
\[
H^0\bigl(ip(K_X + \Delta + M)\bigr)
= H^0\bigl(ip(K_{\widetilde{X}} + \widetilde{\Delta} + \widetilde{f}^*(D' - L) + i R^-)\bigr)
= H^0\bigl(ip(K_{\widetilde{Y}} + B + D')\bigr).
\] 
Applying \cite[Theorem~0.1]{campanapaunkoziarz}, we deduce
\[
\kappa(K_X + \Delta) \ge \kappa(K_X + \Delta + M) = \kappa(K_{\widetilde{Y}} + B + D') \ge \kappa(Y),
\] 
as desired.

\medskip
\noindent
\textbf{Case 2:} $\eta(L) = 2$.  

There are two possibilities: $\nu(L) = 1$ or $\nu(L) = 2$. By assumption, $\nu(L) = 2$. Applying Theorem~\ref{reductionmap}, there exists a birational model 
\[
\varphi \colon \bar{Y} \to \widetilde{Y}
\] 
and a surjective morphism
\[
\pi \colon \bar{Y} \to Z
\] 
with connected fibers such that, for a general fiber $G$ of $\pi$,
\[
P_\sigma(\varphi^* L)|_G \equiv 0.
\] 
Then, by Theorem~\ref{fibertrivial}, there exist a smooth birational model
\[
\psi \colon Y' \to \bar{Y},
\]
a morphism
\[
g \colon Y' \to Z'
\] 
birationally equivalent to $\pi$, and a $\mathbb{Q}$-Cartier divisor $D$ on $Z'$ such that
\begin{equation}\label{eq1thm4.1}
P_\sigma((\varphi \circ \psi)^* L) \equiv P_\sigma(g^* D).
\end{equation}
We have $\dim Z' = 2$ and $\nu(D) = 2$, hence $D$ is big on $Z'$ and $P_\sigma(g^* D)$ is $\Q$-effective. Therefore, $(\varphi \circ \psi)^* L$ is numerically equivalent to a $\mathbb{Q}$-effective divisor
\[
D' := N_\sigma((\varphi \circ \psi)^* L) + P_\sigma(g^* D).
\]

\medskip
\noindent
Consider the diagram:
\[
\begin{tikzcd}
X' \arrow[r, "f'"] \arrow[d, "\bar{\sigma}"] & Y' \arrow[r, "g"] \arrow[d, "\psi"] & Z' \\
\bar{X} \arrow[r, "\bar{f}"] \arrow[d, "\widetilde{\sigma}"] & \bar{Y} \arrow[r, "\pi"] \arrow[d, "\varphi"] & Z \\
\widetilde{X} \arrow[r, "\widetilde{f}"] \arrow[d, "\sigma"] & \widetilde{Y} \arrow[d, "\tau"] \\
X \arrow[r, "f"] & Y
\end{tikzcd}
\]
The varieties $\bar{X}$ and $X'$ are birational to $X$, and there exists a klt pair $(X', \Delta')$ such that
\[
K_{X'} + \Delta' = (\widetilde{\sigma} \circ \bar{\sigma})^*(K_{\widetilde{X}} + \widetilde{\Delta}).
\] 
Hence,
\[
p(K_{X'} + \Delta') - (\widetilde{\sigma} \circ \bar{\sigma})^* R
= p f'^* (\varphi \circ \psi)^* (K_{\widetilde{Y}} + B + L)
\] 
We then set
\[
M := (\sigma \circ \widetilde{\sigma} \circ \bar{\sigma})_* f'^* (P_\sigma(g^* D) - P_\sigma((\varphi \circ \psi)^* L)).
\] 
By \eqref{eq1thm4.1} and for sufficiently divisible $p$ and $i$, we obtain
\[
H^0\bigl(ip(K_X + \Delta + M)\bigr) = H^0\bigl(ip(K_{X'} + \Delta' + \widetilde{f}^*(P_\sigma(g^* D) - P_\sigma((\varphi \circ \psi)^* L)) + (\widetilde{\sigma} \circ \bar{\sigma})^{*} iR^-)\bigr)\]
\[\geq H^0\bigl(ip(K_{\widetilde{Y}} + B)\bigr). 
\] 
Applying \cite[Theorem~0.1]{campanapaunkoziarz}, we deduce
\[
\kappa(K_X + \Delta) \ge \kappa(K_X + \Delta + M) \ge \kappa(Y).
\]

\medskip
\noindent
\textbf{Case 3:} $\eta(L) = 3$.  
In this case $L$ is almost stricly nef. By \cite{serranoconj}, $K_{\widetilde{Y}} + L$ is big, except when $\widetilde{Y}$ is a Calabi--Yau threefold with 
\[
L \cdot c_2(\widetilde{Y}) = 0.
\] 
Hence, by Theorem~\ref{fujinomoricbf}, we deduce
\[
\kappa(K_X + \Delta) \ge 3,
\]
completing the proof.
\end{proof}
\begin{corollary}\label{iitakac_7,m}
Let $(X, \Delta)$ be a klt pair, and let 
\[
f \colon X \to Y
\] 
be an algebraic fiber space, where $X$ and $Y$ are smooth projective varieties of dimensions $7$ and $m$, respectively. Let $F$ denote a general fiber of $f$. Then 
\[
\kappa(K_X + \Delta) \ge \kappa(K_F + \Delta|_F) + \kappa(Y),
\]
except maybe the case $m = 3$ and under the setting of Theorem~\ref{fujinomoricbf} when $L$ is not numerically $\Q$-effective:
\begin{enumerate}
    \item $\nu(L) = 1$, $\eta(L) = 2$, and $q(Y) = 0$;
    \item $L$ is almost strictly nef and $\widetilde{Y}$ is a Calabi--Yau threefold with $L \cdot c_2(\widetilde{Y}) = 0$.
\end{enumerate}
\end{corollary}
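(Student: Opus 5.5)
The plan is to split according to the dimension $m$ of the base and to use, in each range, a known case of the Iitaka conjecture. Since $\dim X = 7$, the general fiber $F$ has dimension $7-m$.

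\textbf{Small bases and trivial cases.} If $m=0$ the statement is trivial. If $m=7$ the fiber is a point and the inequality follows from $\kappa(K_X+\Delta)\ge\kappa(Y)$ for the birational morphism $f$, since $\Delta\ge 0$. For $m=1$ and $m=2$ we invoke Kawamata's $C_{n,1}$ \cite{Kawamatacurves} and Cao's $C_{n,2}$ \cite{Caolitaka2}, both of which hold in the klt setting.

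\textbf{Large bases, $m\ge 4$.} Here $\dim F = 7-m\le 3$. The log minimal model program and log abundance are known for klt pairs in dimension at most three, by Kawamata, Miyaoka, Mori, Keel--Matsuki--McKernan and Shokurov. Hence $(F,\Delta|_F)$ has a good minimal model whenever $\kappa(K_F+\Delta|_F)\ge 0$. When $\kappa(K_F+\Delta|_F)=-\infty$ there is nothing to prove. By \cite{kawamataminimalmodel}, which says that the Iitaka inequality holds when the general fiber has a good minimal model, $C_{7,m}$ follows in this range.

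\textbf{The case $m=3$.} The fiber $F$ is now a fourfold, and this is the only genuinely new case.
\begin{enumerate}
\item If $q(Y)>0$, apply Corollary~\ref{irregthm}.
\item If $\kappa(K_F+\Delta|_F)=0$, Theorem~\ref{fiberkod0} gives $\kappa(K_X+\Delta)\ge\kappa(Y)$, except in the two listed cases, which are exactly the exceptions in the corollary.
\item If $\kappa(K_F+\Delta|_F)=4$, the pair $(F,\Delta|_F)$ is of log general type, so it has a good minimal model by Birkar--Cascini--Hacon--McKernan, and \cite{kawamataminimalmodel} applies.
\item If $1\le\kappa(K_F+\Delta|_F)\le 3$, take the relative Iitaka fibration of $K_X+\Delta$ over $Y$, after a birational modification. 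Its general fiber $G$ has $\dim G = 4-\kappa(K_F+\Delta|_F)\le 3$ and log Kodaira dimension zero. So $(G,\Delta|_G)$ has a good minimal model by the three-dimensional log abundance, and the fiber of $X\to Y$ is of the type where Kawamata's theorem applies. Equivalently, the log fourfold $(F,\Delta|_F)$ with positive Kodaira dimension has a good minimal model; this is known by Lai's argument using the Iitaka fibration together with lower-dimensional abundance. Then \cite{kawamataminimalmodel} applies.
\end{enumerate}

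\textbf{Main obstacle.} The delicate step is case (4) of $m=3$, that is, justifying that a klt fourfold pair of intermediate Kodaira dimension has a good minimal model. I would cite Lai's result on varieties fibred by good minimal models and check that its proof extends to klt pairs. If that check fails, the fallback is to run the argument through the relative Iitaka fibration and use the canonical bundle formula in the style of Theorem~\ref{fiberkod0}.
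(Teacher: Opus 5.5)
Your proposal is correct and follows the paper's proof: Kawamata and Cao for $m=1,2$; good minimal models of fibers of dimension at most three plus \cite{kawamataminimalmodel} for $m\ge 4$; and for $m=3$, a good minimal model plus \cite{kawamataminimalmodel} when $\kappa(K_F+\Delta|_F)>0$, with Theorem~\ref{fiberkod0} handling $\kappa(K_F+\Delta|_F)=0$. Your additions (BCHM in the log general type case, Lai's Iitaka-fibration argument in intermediate Kodaira dimension) supply a justification for the good-minimal-model claim that the paper states without citation. Your separate $q(Y)>0$ case is harmless but redundant, since Theorem~\ref{fiberkod0} already reduces to $q(Y)=0$.
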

\begin{proof}
If $m = 1$ or $2$, the statement follows from Kawamata \cite{Kawamatacurves} and Cao \cite{Caolitaka2}, respectively.  
If $m \ge 4$, then the log general fiber $(F, \Delta|_F)$ admits a good minimal model, and applying \cite{kawamataminimalmodel}, the result holds.  

The only remaining case is $m = 3$, so $\dim F = 4$.  
If $\kappa(K_F + \Delta|_F) > 0$, then $(F, \Delta|_F)$ has a good minimal model, and by \cite{kawamataminimalmodel}, the result again holds.  

Thus, we need only consider the case 
\[
\kappa(K_F + \Delta|_F) = 0,
\] 
and in this situation, the statement follows directly from Theorem~\ref{fiberkod0}.
\end{proof}
\section{Algebraic Fiber Spaces over Threefolds of Positive Kodaira Dimension}
In this section, we consider the case where the log general fiber $(F, \Delta|_F)$ has non-negative Kodaira dimension, that is,
\[
\kappa(K_F + \Delta|_F) \geq 0,
\]
and assuming that $\kappa(Y) > 0$. We will take the advantage of the existence of a good minimal model for the base variety $Y$. 
Under these assumptions, we prove the following theorems.
\begin{theorem}\label{positivekodaira}
Let $(X,\Delta)$ be a Klt pair. Let $f : X \to Y$ be an algebraic fiber space, where $X$ and $Y$ are smooth projective varieties of dimension $n$ and $3$,
respectively such that $\kappa(Y)\geq 2$, and let $F$ be a general fiber of $f$. Then
\[
\kappa(K_X +\Delta) \geq \kappa(K_F + \Delta|_{F}) + \kappa(Y).\]
\end{theorem}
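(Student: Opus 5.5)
We follow Cao's strategy for $C_{n,2}$, replacing the surface base by the Iitaka fibration of the threefold $Y$. Since $\kappa(Y)\ge 2$, the abundance theorem for threefolds \cite{kawamataabundance}, \cite{Miyaoka}, \cite{moriminimalthreefold} gives a good minimal model of $Y$. After a birational modification of $Y$, which is harmless since $\kappa(Y)$ is a birational invariant and we may replace $X$ by a resolution of the fiber product, we obtain a morphism $g\colon Y\to Z$ with connected fibers onto a smooth projective $Z$ of dimension $\kappa(Y)\in\{2,3\}$. We may also write $mK_Y \sim_{\mathbb{Q}} g^*(cA_Z) + E$ with $A_Z$ ample and $E\ge 0$, up to a $\mathbb{Q}$-linear equivalence $P+E$ with $P$ semiample pulled back from $Z$. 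The general fiber $Y_Z$ is either a point ($\kappa(Y)=3$) or a smooth curve with $\kappa(Y_Z)=0$, that is, an elliptic curve. If $\kappa(Y)=3$, then $Y$ is of general type and the statement is Viehweg's theorem \cite{viehweglitaka}. So we concentrate on $\kappa(Y)=2$, where $Z$ is a surface and $Y_Z$ is an elliptic curve.

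\textbf{Step 1: sections on $Y_Z$.} Let $f_Z\colon X_Z\to Y_Z$ be the restriction of $f$ over an elliptic fiber. Then $(X_Z,\Delta|_{X_Z})$ is a klt fiber space over a curve, and $C_{n,1}$ \cite{Kawamatacurves} gives $\kappa(K_{X_Z}+\Delta)\ge \kappa(K_F+\Delta|_F)$. Concretely, we want many sections $s$ of $\omega_{Y_Z}\otimes f_*\mathcal{O}_X(m(K_{X/Y}+\Delta))|_{Y_Z}$ with finite $L^2$ norm with respect to the metric $h$ of Variant~\ref{variant3}. Their number should grow like $m^{\kappa(K_F+\Delta|_F)}$, with $\omega_{Y_Z}$ trivial.

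To obtain these sections, we use the Catanese--Fujita--Kawamata decomposition (Theorem~\ref{fujitadecom}) of $V_m:=f_*\mathcal{O}_X(m(K_{X/Y}+\Delta))|_{Y_Z}$ on the elliptic curve. Positive-degree pieces have sections after twisting by a degree-zero line bundle. The flat part is handled with Proposition~\ref{caoprop2}: if $\det V_m$ is numerically trivial on $Y_Z$, then $V_m$ is Hermitian flat, hence given by a unitary representation of $\pi_1$ of an elliptic curve. It therefore splits into torsion or non-torsion characters, and a finite étale base change of $Y_Z$ (or of the Iitaka model) makes enough of them trivial. In the ample case, sections of $V_m$ twisted by a fixed degree-zero bundle exist anyway. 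The $L^2$ condition $\int_{Y_Z}|s|^2_h<\infty$ is automatic for sections coming from genuine pluricanonical sections on $X_Z$, by the construction of the Narasimhan--Simha type metric.

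\textbf{Step 2: extension.} We apply Proposition~\ref{caoproposition} with $H=(m-1)K_Y$ (or $0$), equipped with the metric induced from the semiample part $g^*P$. Each $L^2$ section $s$ on $Y_Z$ then extends to $S\in H^0\bigl(Y,\omega_Y\otimes\mathcal{O}(H)\otimes f_*\mathcal{O}_X(m(K_{X/Y}+\Delta))\otimes g^*(cA_Z)\bigr)$. Since $kK_Y\sim_{\mathbb{Q}} g^*(\text{ample})+E$, we absorb $g^*(cA_Z)$ by replacing it with a multiple of $K_Y$ minus an effective divisor. This gives sections of $\mathcal{O}_X(m'(K_X+\Delta))$ which restrict on $X_Z$ to the chosen sections. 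Multiplying these by pullbacks of sections of $g^*(kA_Z)$, which are independent along $Z$, a standard dimension count yields $\kappa(K_X+\Delta)\ge \kappa(K_{X_Z}+\Delta|_{X_Z})+\dim Z\ge \kappa(K_F+\Delta|_F)+\kappa(Y)$.

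\textbf{Main obstacle.} The delicate point is Step 1 for the flat part of $V_m$ on the elliptic curve $Y_Z$. We need the non-torsion unitary characters to vanish, or to be made trivial, uniformly in $m$, compatibly with the extension step, since a non-torsion flat line bundle has no sections. Our first approach is the argument of Cao for elliptic fibers over surfaces. A unitary flat summand of $V_m$ on the general elliptic fiber varies with $z\in Z$ and comes from a flat structure on the preimage of an open set of $Z$, by Proposition~\ref{caoprop2} applied with $U=g^{-1}(\text{open})$. This should force the relevant characters to be torsion after a finite étale cover of $Y$, which does not change $\kappa(Y)$ or the Iitaka inequality.
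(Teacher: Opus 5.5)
Your proposal has a genuine gap in Step~1. You assert that $\int_{Y_Z}|s|_h^2<\infty$ is automatic for sections coming from $H^0(X_Z,m(K_{X_Z}+\Delta|_{X_Z}))$, and this fails for $m\ge 2$. The metric $h$ of Variant~\ref{variant3} can be genuinely singular at the points where $Y_Z$ meets the discriminant of $f$. For instance, take $\Delta=0$ and suppose $f$ has a fiber of multiplicity $k\ge 2$ over a point $t=0$ of $Y_Z$. Then the fiberwise $L^{2/m}$-norm of a local generator $s$ of $f_*\mathcal{O}_X(mK_{X/Y})$ gives $|s|_h^2\sim|t|^{-2\lfloor m(k-1)/k\rfloor}$, which is not integrable near $0$. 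This is exactly why Cao, and the paper, need Proposition~\ref{caoprop2}: smoothness of $h$ along $Y_Z$ is only available when the determinant is numerically trivial there.

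Conversely, what you call the main obstacle is not one. Whatever the splitting of $V_m$, one has $H^0(Y_Z,V_m)=H^0(X_Z,m(K_{X_Z}+\Delta|_{X_Z}))$, and its growth is already controlled by $C_{n,1}$. Non-torsion flat summands simply contribute no sections. No finite \'{e}tale cover could trivialize them anyway, since a non-torsion character of $\mathbb{Z}^2$ stays non-torsion on every finite-index subgroup. Twisting positive summands by degree-zero bundles produces sections of the wrong bundle.

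The missing idea is the paper's dichotomy on the degree (which is $\ge 0$) of $\det f_*\mathcal{O}_X(m(K_{X/Y}+\Delta))|_{Y_Z}$.

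\emph{Degree zero.} Proposition~\ref{caoprop2} makes $V_m$ Hermitian flat with smooth metric, so every section over $Y_Z$ is $L^2$ and Proposition~\ref{caoproposition} applies. Take $H=0$ there. The extended sections then live in $f_*\mathcal{O}_X(m(K_X+\Delta))\otimes\mathcal{O}_Y(g^*(cA_Z)-(m-1)K_Y)$. Since $K_Y\sim_{\mathbb{Q}}g^*(\text{ample})+(\text{effective})$ and $c$ does not depend on $m$, you get $m(K_X+\Delta)-p\,(g\circ f)^*A_Z$ $\mathbb{Q}$-effective with $p>0$. Hence $\kappa(K_X+\Delta)=\kappa(K_{X_Z}+\Delta|_{X_Z})+2$. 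With your choice $H=(m-1)K_Y$ you are left with the positive twist $(g\circ f)^*(cA_Z)$. Absorbing it into a multiple of $f^*K_Y$ does not help, because $f^*K_Y$ is not dominated by $K_X+\Delta$ unless $K_{X/Y}+\Delta$ is $\mathbb{Q}$-effective.

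\emph{Positive degree.} The extension route is abandoned. Replacing $\Delta$ by $\Delta+\widetilde{\epsilon}f^*K_Y$ makes the determinant big on $Y$. The Cao--P\u{a}un pseudo-effectivity of $K_{X/Y}+\Delta+\widetilde{\epsilon}f^*K_Y+E-\epsilon f^*\det(\cdots)$, combined with Theorem~\ref{schnellthm}, then gives $\kappa(K_X+\Delta)=\kappa(K_F+\Delta|_F)+3$.

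When $Z$ is not uniruled, the paper bypasses all of this by applying Theorem~\ref{schnellthm} directly to $r(K_X+\Delta)-(g\circ f)^*A=r(K_{X/Y}+\Delta)$.
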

\begin{proof}
We may assume that \( K_Y \) is semi-ample. If \( \kappa(Y) = 3 \), then the result follows from \cite{viehweglitaka}. If \( \kappa(Y) = 2 \), we consider the Iitaka fibration \( g \) of \( Y \) defined by \( |rK_Y| \) for \( r \) sufficiently large, and obtain the following diagram:
\[
\begin{tikzcd}
X \arrow[r, "f"] \arrow[rd, "h"] & Y \arrow[d, "g"] \\
 & Z
\end{tikzcd}
\]
with \( \dim Z = 2 \), and
\[
rK_Y = g^{*}A
\]
for some ample divisor \( A \) on \( Z \). We denote by \( Y_Z \) a general fiber of \( g \), it is an elliptic curve.  

If \( Z \) is not uniruled, then the desired result follows readily from Theorem~\ref{schnellthm}. Indeed, since
\[
\kappa\bigl(K_F + \Delta|_{F}\bigr) \geq 0,
\]
the divisor \( K_{X/Y} + \Delta \) is pseudo-effective, and we can write
\[
r(K_{X/Y} + \Delta) = r(K_X + \Delta) - h^{*}A.
\]
Therefore, by applying Theorem~\ref{schnellthm}, we deduce that \( r(K_X + \Delta) - h^{*}A \) is effective for \( r \) sufficiently large. Hence
\begin{equation}\label{eq3}
\kappa(K_X + \Delta)
= \kappa\bigl(K_{X_Z} + \Delta|_{X_Z}\bigr) + \dim Z
= \kappa\bigl(K_{X_Z} + \Delta|_{X_Z}\bigr) + \kappa(Y).
\end{equation}
Applying \cite{Kawamatacurves}, we obtain
\begin{equation}\label{eq4}
\kappa\bigl(K_{X_Z} + \Delta|_{X_Z}\bigr)
\geq \kappa\bigl(K_F + \Delta|_{F}\bigr) + \kappa(Y_Z)
= \kappa\bigl(K_F + \Delta|_{F}\bigr).
\end{equation}
By inequalities~\eqref{eq3} and~\eqref{eq4}, we conclude that
\[
\kappa(K_X + \Delta)
\geq \kappa\bigl(K_F + \Delta|_{F}\bigr) + \kappa(Y).
\]
Otherwise, we apply Proposition~\ref{caoproposition} as follows.
\begin{itemize}
    \item \textbf{Case~1}. 
\[
i\Theta_{\det h}\bigl(\det f_{*}\mathcal{O}_X\bigl(m(K_{X/Y} + \Delta)\bigr)\bigr)|_{Y_Z} \equiv 0, \hspace{0.2cm} \text{for $m$ sufficiently divisible.}
\]
By applying Proposition~\ref{caoprop2}, we conclude that
\[
f_{*}\mathcal{O}_X\bigl(m(K_{X/Y} + \Delta)\bigr)|_{Y_Z}
\]
is a Hermitian flat vector bundle, and that the induced metric \( h \) is smooth. Therefore, we apply Proposition~\ref{caoproposition}: for every
\[
s \in H^{0}\!\left(Y_Z,\, f_{*}\mathcal{O}_X\bigl(m(K_{X/Y} + \Delta)\bigr)\right)
= H^{0}\!\left(X_Z,\, m(K_X + \Delta)\right),
\]
there exists
\[
S \in H^{0}\!\left(Y,\, \omega_Y \otimes f_{*}\mathcal{O}_X\bigl(m(K_{X/Y} + \Delta)\bigr) \otimes g^{*}(cA)\right)
\]
such that
\[
S|_{Y_Z} = s \otimes g^{*}e,
\]
where \( e \in \mathcal{O}_{Z,z}(cA) \) and \( c \) is sufficiently large, depending only on \( Z \) and \( A \). Consequently, we deduce that
\begin{equation}\label{eq1}
\kappa\bigl(m(K_X + \Delta) - p h^{*}A\bigr)
\geq \kappa\bigl(K_{X_Z} + \Delta|_{X_Z}\bigr)
\end{equation}
for \( m \) sufficiently large and for some integer \( p \).

Next, by applying \cite{Kawamatacurves}, we obtain the inequality
\begin{equation}\label{eq2}
\kappa\bigl(K_{X_Z} + \Delta|_{X_Z}\bigr)
\geq \kappa\bigl(K_F + \Delta|_{F}\bigr) + \kappa(Y_Z)
= \kappa\bigl(K_F + \Delta|_{F}\bigr)
\geq 0 .
\end{equation}
Combining inequalities~\eqref{eq1} and~\eqref{eq2}, we conclude that \( m(K_X + \Delta) - p h^{*}A \) is \( \mathbb{Q} \)-effective. Hence
\[
\kappa(K_X + \Delta)
= \kappa\bigl(K_{X_Z} + \Delta|_{X_Z}\bigr) + \kappa(Y).
\]
Finally, combining this last equality with inequality~\eqref{eq2}, we obtain the desired result.

\item \textbf{Case~2}. 
\[
i\Theta_{\det h}\bigl(\det f_{*}\mathcal{O}_X(m(K_{X/Y} + \Delta))\bigr)|_{Y_Z} \text{ is ample for $m$ sufficiently divisible.}
\]

Since \( K_Y \) is semi-ample, we may choose \( \widetilde{\epsilon} > 0 \) sufficiently small such that
\[
\widetilde{\Delta} := \widetilde{\epsilon} f^{*}K_Y
\]
and \( (X, \Delta + \widetilde{\Delta}) \) is a klt pair. Then
\[
\det f_{*}\mathcal{O}_X\bigl(m(K_{X/Y} + \Delta + \widetilde{\Delta})\bigr)
= \det f_{*}\mathcal{O}_X\bigl(m(K_{X/Y} + \Delta)\bigr) \otimes \mathcal{O}_Y(m \widetilde{\epsilon} K_Y)^{\otimes r_m},
\]
where \( r_m \) is the rank of \( f_{*}\mathcal{O}_X\bigl(m(K_{X/Y} + \Delta)\bigr) \).

On the other hand, by the work of \cite{Caopaun}, we have
\[
K_{X/Y} + \Delta + \widetilde{\Delta} + E - \epsilon f^{*}\bigl(\det f_{*}\mathcal{O}_X(m(K_{X/Y} + \Delta + \widetilde{\Delta}))\bigr)
\]
is pseudo-effective for some sufficiently small \( \epsilon > 0 \), where \( E \) is a divisor on \( X \) with
\(\mathrm{codim}_Y f_{*}(E) \geq 2\). Hence,
\[
K_X + \Delta + E - \epsilon f^{*}\Bigl(\det f_{*}\mathcal{O}_X(m(K_{X/Y} + \Delta + \widetilde{\Delta})) + \frac{1 - \widetilde{\epsilon}}{\epsilon} K_Y\Bigr)
\]
is pseudo-effective. It is known that the divisor \( E \) does not contribute to the Kodaira dimension. Consequently by the bigness of 
\[
\det f_{*}\mathcal{O}_X\bigl(m(K_{X/Y} + \Delta + \widetilde{\Delta})\bigr)
\]
and by applying Theorem~\ref{schnellthm}, we obtain
\[
\kappa(K_X + \Delta) = \kappa(K_F + \Delta|_{F}) + 3.
\]
\end{itemize}
\end{proof}
As mentioned at the beginning of the section, we may assume that $K_Y$ is semi-ample. We denote by $g \colon Y \to Z$ the Iitaka fibration of $Y$, with general fiber $Y_Z$.
\begin{theorem}\label{kodairadimension1}
Let $(X,\Delta)$ be a klt pair. Let $f \colon X \to Y$ be an algebraic fiber space, where $X$ and $Y$ are smooth projective varieties of dimensions $n$ and $3$, respectively, such that $\kappa(Y)=1$, and let $F$ be a general fiber of $f$. Then
\[
\kappa(K_X + \Delta) \geq \kappa(K_F + \Delta|_{F}) + 1,
\]
provided that
\[
\kappa\!\left(\det f_{*}\mathcal{O}_X\!\left(m(K_{X/Y} + \Delta)\right)\big|_{Y_Z}\right) > 0, \hspace{0.2cm} \text{for $m$ sufficiently divisible}
\]
in the case where $Z \simeq \mathbb{P}^1$.
\end{theorem}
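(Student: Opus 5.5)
We follow the pattern of the proof of Theorem~\ref{positivekodaira}, now with $\dim Z = 1$. Since $q(Y)>0$ is already handled by Corollary~\ref{irregthm}, we may assume $q(Y)=0$, and also that $K_Y$ is semi-ample. Take the Iitaka fibration $g\colon Y\to Z$, given by $|rK_Y|$ with $rK_Y=g^{*}A$ for some ample $A$ on the smooth curve $Z$. Write $h=g\circ f$. The general fiber $Y_Z$ is a surface with $\kappa(Y_Z)=0$, and $X_Z:=h^{-1}(z)$ fibers over $Y_Z$ with general fiber $F$. Because $\dim Y_Z=2$, Cao's theorem $C_{n,2}$ \cite{Caolitaka2} applies to $f|_{X_Z}\colon X_Z\to Y_Z$ and gives
\[
\kappa\bigl(K_{X_Z}+\Delta|_{X_Z}\bigr)\ \ge\ \kappa\bigl(K_F+\Delta|_F\bigr)+\kappa(Y_Z)=\kappa\bigl(K_F+\Delta|_F\bigr)\ \ge 0 .
\]
It then suffices to show that $m(K_X+\Delta)-p\,h^{*}A$ is $\mathbb{Q}$-effective for some $m,p>0$. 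Granting this, the easy addition/Iitaka argument, as in \eqref{eq3}, gives $\kappa(K_X+\Delta)=\kappa(K_{X_Z}+\Delta|_{X_Z})+1$.

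\textbf{Step 1: $Z$ of positive genus.} Here $Z$ is not uniruled, and we proceed exactly as in the first part of the proof of Theorem~\ref{positivekodaira}. The divisor $K_{X/Y}+\Delta$ is pseudo-effective because $\kappa(K_F+\Delta|_F)\ge0$. Hence $r(K_X+\Delta)-h^{*}A=r(K_{X/Y}+\Delta)$ is pseudo-effective, and Theorem~\ref{schnellthm} in its klt form (Remark~\ref{kltremarks}), applied to $h\colon X\to Z$, shows that it is effective for $r$ divisible enough. Note that Theorem~\ref{schnellthm} needs $\kappa(K_{X_Z}+\Delta|_{X_Z})\ge0$ for the general fiber of $h$, which is supplied by the display above.

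\textbf{Step 2: $Z\simeq\mathbb{P}^1$.} Here we split according to the curvature of $\det f_{*}\mathcal{O}_X(m(K_{X/Y}+\Delta))$ on $Y_Z$, as in Case~1 and Case~2 of Theorem~\ref{positivekodaira}.

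If $i\Theta_{\det h}$ vanishes identically on $Y_Z$, Proposition~\ref{caoprop2} shows that the pushforward is Hermitian flat and its metric is smooth on $Y_Z$. Every section in $H^0(X_Z,m(K_X+\Delta))$ is then $L^2$. Proposition~\ref{caoproposition} extends it, after twisting by $g^{*}(cA)$, and since $\omega_Y=\tfrac1r g^{*}A$ the twist only shifts the multiple of $h^{*}A$. Combined with $\kappa(K_{X_Z}+\Delta|_{X_Z})\ge0$, this yields the required $\mathbb{Q}$-effectivity.

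Under the hypothesis $\kappa(\det f_{*}\mathcal{O}_X(m(K_{X/Y}+\Delta))|_{Y_Z})>0$, we instead use the Cao--P\u{a}un inequality. The divisor
\[
K_{X/Y}+\Delta+E-\epsilon f^{*}\det f_{*}\mathcal{O}_X\bigl(m(K_{X/Y}+\Delta)\bigr)
\]
is pseudo-effective, with $E$ irrelevant for Kodaira dimension. We would then perturb $\Delta$ by $\widetilde{\epsilon}f^{*}K_Y$, as in Case~2 of Theorem~\ref{positivekodaira}, so that the determinant becomes positive in the $Z$-direction as well as on $Y_Z$. Write $\mathcal{L}:=\det f_{*}\mathcal{O}_X(m(K_{X/Y}+\Delta))$. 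The aim is to show that $\mathcal{L}+\delta K_Y$ has $\kappa(\mathcal{L}+\delta K_Y)\ge 1+\kappa(\mathcal{L}|_{Y_Z})$, and then apply Theorem~\ref{schnellthm} or the Iitaka inequality over the base of the Iitaka fibration of $\mathcal{L}+\delta K_Y$. That base has dimension $\ge2$ and should not be uniruled, or else we reduce to $C_{n,2}$ and $C_{n,1}$.

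\textbf{Main obstacle.} The hardest step is this last subcase. When $\mathcal{L}|_{Y_Z}$ is neither numerically trivial nor big, it is not clear how to pass from positivity of $\mathcal{L}$ to effectivity of $K_X+\Delta$ minus a pullback. We would first try to use the Iitaka fibration $Y\dashrightarrow W$ of $\mathcal{L}+\delta K_Y$, which has $\dim W\ge2$. Over $W$ we would run the argument of Step~1 or of Theorem~\ref{positivekodaira}, with $W$ in the role of $Z$, and at the very end use $q(Y)=0$ to rule out the uniruled situations we cannot control.
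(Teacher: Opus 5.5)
There is a genuine gap: the subcase $\kappa(\mathcal{L}|_{Y_Z})=1$ is left unproved, and the route you sketch for it fails. Your Step~1 is fine and matches the paper. There you apply Theorem~\ref{schnellthm} to $h=g\circ f$ using $r(K_X+\Delta)-h^{*}A=r(K_{X/Y}+\Delta)$, with $C_{n,2}$ on $X_Z\to Y_Z$ supplying the fiber condition. The problem is Step~2, which is the real content of the theorem.

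Your flat subcase is excluded by the hypothesis. Indeed, $\kappa(\mathcal{L}|_{Y_Z})>0$ forces $\mathcal{L}|_{Y_Z}$ to be numerically nontrivial, so that part (essentially the paper's Remark~\ref{remarksection5}) is beside the point. Under the hypothesis, $\kappa(\mathcal{L}|_{Y_Z})\in\{1,2\}$, and for both values you only state an aim.

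For $\kappa(\mathcal{L}|_{Y_Z})=2$ the aim can be realized as in the paper. Since $\mathcal{L}$ is pseudo-effective and big on $Y_Z$, the class $\mathcal{L}+\delta g^{*}A$ is big on $Y$. The Cao--P\u{a}un pseudo-effectivity together with Theorem~\ref{schnellthm} over the non-uniruled $Y$ then gives $\kappa(K_X+\Delta)=\kappa(K_F+\Delta|_F)+3$. You should say this explicitly.

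For $\kappa(\mathcal{L}|_{Y_Z})=1$ you concede you have no argument, and the route you sketch does not work. The base $W$ of the map given by $\mathcal{L}+\delta K_Y$ is a surface over $Z\simeq\mathbb{P}^1$. Its fibers are the images of $Y_Z$ under $\bigl|p\,\mathcal{L}|_{Y_Z}\bigr|$, and these are rational curves whenever $q(Y_Z)=0$. That covers K3 or Enriques fibers, which are perfectly compatible with $q(Y)=0$. So $W$ is rational, and Theorem~\ref{schnellthm} is unavailable over $W$. Over $Y$ it needs a big class, and $\mathcal{L}+\delta K_Y$ is not big, since its restriction to $Y_Z$ is not big.

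The missing idea is the one the paper uses:
\begin{enumerate}
\item Let $|p\mathcal{L}+g^{*}A|$ define a map $Y\dashrightarrow M$ onto a surface that factors $g$.
\item After resolution, its general fiber $Y_M$ is an elliptic curve: it is a fiber of a fibration of the surface $Y_Z$, which has $K_{Y_Z}$ numerically trivial.
\item On this fiber, $i\Theta_{\det h}(\mathcal{L})|_{Y_M}\equiv 0$.
\item Proposition~\ref{caoprop2} then makes $f_{*}\mathcal{O}_X(m(K_{X/Y}+\Delta))|_{Y_M}$ Hermitian flat with smooth metric. Hence every section over $X_M$ automatically satisfies the $L^2$ condition.
\item Proposition~\ref{caoproposition}, applied relative to $Y\to M$ exactly as in Case~1 of Theorem~\ref{positivekodaira} with $M$ in place of $Z$, extends these sections. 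This gives $\kappa(K_X+\Delta)=\kappa(K_{X_Z}+\Delta|_{X_Z})+1$.
\item Combined with $C_{n,2}$ for $X_Z\to Y_Z$, this yields the claim.
\end{enumerate}
Without this flatness along the fibers of $Y\dashrightarrow M$, nothing in your proposal verifies the integrability hypothesis needed to extend sections. So the case the theorem is really about remains unproved.
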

\begin{proof}
When $g(Z) > 0$, the desired result follows easily from Theorem~\ref{schnellthm}. The only difficult case occurs when $Z \simeq \mathbb{P}^1$. We proceed as follows.

\begin{itemize}
\item \textbf{Case 1.}
\[
\kappa\!\left(\det f_{*}\mathcal{O}_X\!\left(m(K_{X/Y} + \Delta)\right)\big|_{Y_Z}\right) = 2.
\]
In this case, the divisor
\[
\det f_{*}\mathcal{O}_X\!\left(m(K_{X/Y} + \Delta)\right)\big|_{Y_Z}
\]
is big on the minimal surface $Y_Z$. Choose $\widetilde{\varepsilon} > 0$ sufficiently small and set
\[
\widetilde{\Delta} := \widetilde{\varepsilon}\, f^{*}K_Y,
\]
so that $(X, \Delta + \widetilde{\Delta})$ is a klt pair. By the same argument as in Case~2 of Theorem~\ref{positivekodaira}, we deduce that
\[
\kappa(K_X + \Delta) = \kappa(K_F + \Delta|_{F}) + 3.
\]

\item \textbf{Case 2.}
\[
\kappa\!\left(\det f_{*}\mathcal{O}_X\!\left(m(K_{X/Y} + \Delta)\right)\big|_{Y_Z}\right) = 1.
\]
We choose $p$ sufficiently large such that the image of the map defined by
\[
\left|p\,\det f_{*}\mathcal{O}_X\!\left(m(K_{X/Y} + \Delta)\right)\big|_{Y_Z}\right|
\]
is a curve. For a sufficiently ample divisor $A$ on $Z$, we have
\[
\kappa\!\left(p\,\det f_{*}\mathcal{O}_X\!\left(m(K_{X/Y} + \Delta)\right) + g^{*}A\right) = 2.
\]
Hence, for $p$ large enough and $A$ sufficiently ample, the linear system
\[
\left|p\,\det f_{*}\mathcal{O}_X\!\left(m(K_{X/Y} + \Delta)\right) + g^{*}A\right|
\]
defines a rational map whose image is a projective surface $M$, and which factorizes the morphism $g$. The general fiber $Y_M$ is an elliptic curve (after resolving the rational map). Moreover,
\[
i\Theta_{\det h}\!\left(\det f_{*}\mathcal{O}_X\!\left(m(K_{X/Y} + \Delta)\right)\right)\big|_{Y_M} \equiv 0.
\]
By Proposition~\ref{caoprop2}, the sheaf $f_{*}\mathcal{O}_X\!\left(m(K_{X/Y} + \Delta)\right)\big|_{Y_M}$ is a Hermitian flat vector bundle with smooth induced metric $h$. Therefore, as in Case~1 of Theorem~\ref{positivekodaira}, applying Proposition~\ref{caoproposition}, we deduce that
\[
\kappa(K_X + \Delta) \geq \kappa(K_F + \Delta|_{F}) + 1.
\]
Indeed, applying Proposition~\ref{caoproposition}, we obtain
\begin{equation}\label{eq5}
\kappa(K_X + \Delta) = \kappa(K_{X_Z} + \Delta|_{X_Z}) + 1.
\end{equation}
Moreover, by \cite{Caolitaka2}, we have
\begin{equation}\label{eq6}
\kappa(K_{X_Z} + \Delta|_{X_Z}) \geq \kappa(F) + \kappa(Y_Z).
\end{equation}
Combining \eqref{eq5} and \eqref{eq6}, we conclude the desired inequality.
\end{itemize}
\end{proof}
\begin{remark}\label{remarksection5}
In the case where $\kappa(Y)=1$, $Z \simeq \mathbb{P}^1$, and
\[
\kappa\!\left(\det f_{*}\mathcal{O}_X\!\left(m(K_{X/Y} + \Delta)\right)\big|_{Y_Z}\right) \leq 0,
\]
the situation becomes more complicated, especially when $\kappa(K_F + \Delta|_{F})>0$. Nevertheless, we can still deduce some partial results.  

If
\[
\det f_{*}\mathcal{O}_X\!\left(m(K_{X/Y} + \Delta)\right)\big|_{Y_Z}
= \mathcal{O}_{Y_Z},
\]
then $f_{*}\mathcal{O}_X\!\left(m(K_{X/Y} + \Delta)\right)\big|_{Y_Z}$ is a Hermitian flat torsion-free sheaf, and by Theorem~\ref{fujitadecom}, it must be a Hermitian flat vector bundle. Applying Proposition~\ref{caoproposition}, we deduce the Iitaka inequality.

It may also happen that
\[
\kappa\!\left(\det f_{*}\mathcal{O}_X\!\left(m(K_{X/Y} + \Delta)\right)\big|_{Y_Z}\right) = -\infty,
\]
for example when $\det f_{*}\mathcal{O}_X\!\left(m(K_{X/Y} + \Delta)\right)\big|_{Y_Z}$ is a non-torsion line bundle in $\operatorname{Pic}^{0}(Y_Z)$. In this case,
\[
i\Theta_{\det h}\!\left(\det f_{*}\mathcal{O}_X\!\left(m(K_{X/Y} + \Delta)\right)\right)\big|_{Y_Z} \equiv 0.
\]
Consequently, by Proposition~\ref{caoprop2} and Theorem~\ref{fujitadecom}, we deduce that
\[
f_{*}\mathcal{O}_X\!\left(m(K_{X/Y} + \Delta)\right)\big|_{Y_Z}
\]
is a Hermitian flat vector bundle, and that the induced metric $h$ is smooth. Using the smoothness of the metric, we can apply Proposition~\ref{caoproposition} to prove the Iitaka conjecture.
\end{remark}
\section{Algebraic fiber spaces over Calabi-Yau Threefolds}
By the previous sections, one of the remaining cases in the proof of the Iitaka conjecture $C_{n,3}$ occurs when $\kappa(Y)=0$ and $q(Y)=0$. By the abundance theorem, such varieties satisfy $c_1(Y)=0$, assuming that $K_Y$ is semi-ample. Consequently, by Beauville-Bogomolov decomposition theorem, after a finite \'{e}tale cover $\widetilde{Y} \to Y$, the variety $\widetilde{Y}$ is a product of Calabi--Yau manifolds and a torus. Since the conjecture $C_{n,m}$ is preserved under smooth finite covers, by Corollary~\ref{irregthm} it remains to prove the conjecture in the case where $Y$ is a Calabi--Yau threefold.
\begin{theorem}\label{overCY}
Let $(X,\Delta)$ be a Klt pair. Let $f : X \to Y$ be an algebraic fiber space, where $X$ is a smooth projective variety of dimension $n$ and 
$Y$ is a Calabi-Yau threefold, and let $F$ be a general fiber of $f$. Then
\[
\kappa(K_X +\Delta) \geq \kappa(K_F + \Delta|_{F}),\]
provided that $\kappa(\det f_{*}\mathcal{O}_X\bigl(m(K_{X/Y} + \Delta)\bigr) > 1$ for $m$ sufficiently divisible.
\end{theorem}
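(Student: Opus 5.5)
Set $\mathcal{L}_m:=\det f_{*}\mathcal{O}_X\bigl(m(K_{X/Y}+\Delta)\bigr)$ for $m$ sufficiently divisible. Since $K_Y\sim_{\mathbb{Q}}0$, we have $K_{X/Y}+\Delta\sim_{\mathbb{Q}} K_X+\Delta$, so it suffices to produce sections of multiples of $K_X+\Delta$. By hypothesis $\kappa(\mathcal{L}_m)\in\{2,3\}$, and I split into these two cases.

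\textbf{Case $\kappa(\mathcal{L}_m)=3$.} Here $\mathcal{L}_m$ is big, and I argue exactly as in Case~2 of Theorem~\ref{positivekodaira}. By \cite{Caopaun},
\[
K_{X/Y}+\Delta+E-\epsilon f^{*}\mathcal{L}_m
\]
is pseudo-effective for some small $\epsilon>0$, where $E$ is a divisor with $\operatorname{codim}_Y f_{*}E\ge 2$. Such an $E$ does not affect Kodaira dimension. Write $\mathcal{L}_m\sim_{\mathbb{Q}} H+N$ with $H$ ample and $N\ge 0$. Then $q(K_X+\Delta)-f^{*}H$ is pseudo-effective for suitable $q$. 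A Calabi--Yau threefold is not uniruled, so Theorem~\ref{schnellthm} (with Remark~\ref{kltremarks}) makes $q(K_X+\Delta)-f^{*}H$ effective. Easy addition then gives $\kappa(K_X+\Delta)=\kappa(K_F+\Delta|_F)+3$, which is stronger than required.

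\textbf{Case $\kappa(\mathcal{L}_m)=2$.} Take the Iitaka fibration of $\mathcal{L}_m$ and resolve it. This gives a birational modification $\mu\colon Y'\to Y$ and an algebraic fiber space $g\colon Y'\to Z$ onto a surface, with general fiber a curve $Y'_Z$. Base change $f$ to $f'\colon X'\to Y'$ with a klt pair $(X',\Delta')$ birational to $(X,\Delta)$. Because $Y$ is not uniruled, $Y'_Z$ is not rational, and $Y'_Z$ is $K$-trivial up to exceptional divisors, so it is an elliptic curve. The key input is
\[
i\Theta_{\det h}(\mathcal{L}_m)\big|_{Y'_Z}\equiv 0,
\]
which holds because the Iitaka fibration contracts $Y'_Z$, so the semipositive metric, which is bounded above by the Bergman-type metrics coming from sections of $\mathcal{L}_m$, has zero curvature there. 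Proposition~\ref{caoprop2} then shows that $f'_{*}\mathcal{O}\bigl(m(K_{X'/Y'}+\Delta')\bigr)\big|_{Y'_Z}$ is Hermitian flat with a smooth metric. So every section $s$ over $Y'_Z$ is $L^2$, and Proposition~\ref{caoproposition} extends $s\otimes g^{*}e$ to a global section $S$ after twisting by $g^{*}(cA_Z)$.

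The twist $g^{*}(cA_Z)$ must be absorbed. As in the proof of Theorem~\ref{kodairadimension1}, I would absorb it with the positive part of $\mathcal{L}_m$ coming from $Z$: up to a fixed effective part, $p\mathcal{L}_m\ge g^{*}A_Z$. Combining this with \cite{Caopaun} as in the big case, I expect to get
\[
\kappa(K_X+\Delta)\ge \kappa(K_{X_Z}+\Delta|_{X_Z})+2,
\]
where $X_Z$ is the preimage of $Y'_Z$. Kawamata's $C_{n,1}$ \cite{Kawamatacurves} applied to $X_Z\to Y'_Z$ gives $\kappa(K_{X_Z}+\Delta|_{X_Z})\ge\kappa(K_F+\Delta|_F)$, and the two inequalities together conclude.

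\textbf{Main obstacle.} The hard step is the absorption in the $\kappa=2$ case. Proposition~\ref{caoproposition} produces sections of $m(K_{X'}+\Delta')+g^{*}(cA_Z)$ plus $\omega_{Y'}$, which is only effective-exceptional, and the canonical bundle of $Y'$ contributes no positivity. So the twist has to be cancelled using $\mathcal{L}_m$ itself. I would try the Case~2 trick of Theorem~\ref{positivekodaira}: add $\widetilde{\epsilon}f^{*}(\text{effective divisor in }|p\mathcal{L}_m|)$ to the boundary, and use \cite{Caopaun} to show that $K_{X'}+\Delta'-\delta f'^{*}g^{*}A_Z$ is pseudo-effective. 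Theorem~\ref{schnellthm}, applied to $h=g\circ f'$ over the non-uniruled surface $Z$, then makes it effective. The non-uniruledness of $Z$ needs checking: I would argue that a uniruled $Z$ would make $Y'$ covered by elliptic surfaces over rational curves, and rule this out via the $\mathbb{Q}$-effectivity of $\mathcal{L}_m$ if necessary. This is where I expect the most difficulty.
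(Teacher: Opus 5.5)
Your treatment of the case $\kappa(\mathcal{L}_m)=3$ is fine; the paper simply cites \cite{Caopaun}. Your observation that $\mathcal{L}_m$ has degree $0$ on $Y'_Z$, by Iitaka's theorem, is also reasonable. The gap is the step you flag yourself: absorbing $g^{*}(cA_Z)$ when $\kappa(\mathcal{L}_m)=2$. The remedy you propose cannot work.

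Theorem~\ref{schnellthm} over $Z$ needs $Z$ to be non-uniruled, and this fails here. $Z$ is dominated by $Y$, so $q(Z)=p_g(Z)=0$. Moreover $C_{3,2}$ applied to $g$ gives $\kappa(Z)\le\kappa(Y')-\kappa(Y'_Z)=0$. Hence $Z$ is rational or Enriques, and the paper in fact takes $Z$ to be a rational surface (think of elliptic Calabi--Yau threefolds over $\mathbb{P}^2$).

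So uniruledness of $Z$ is the typical situation, not something to rule out, and the $\mathbb{Q}$-effectivity of $\mathcal{L}_m$ says nothing about it. Nor can you apply Theorem~\ref{schnellthm} to $f'$ over the non-uniruled $Y'$, because $g^{*}A_Z$ is not ample there. Knowing only that $K_{X'}+\Delta'-\delta f'^{*}g^{*}A_Z$ is pseudo-effective, you have no tool that makes it effective.

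The missing idea is to feed the pseudo-effective divisor from \cite{Caopaun} into the extension theorem itself, rather than into an effectivity criterion. Set $D:=\psi^{*}(K_{X/Y}+\Delta+E-\epsilon f^{*}N)$, which is pseudo-effective on $\widetilde X$, and note $\phi^{*}N-\frac1rE_N=\frac1r\widetilde g^{*}\mathcal{O}_Z(1)$. For $m_1\gg m_2\gg 0$ the paper writes
\[
(m_1+m_2)\bigl(K_{\widetilde X/\widetilde Y}+\bar\Delta\bigr)=m_1\Bigl(K_{\widetilde X/\widetilde Y}+\widetilde\Delta+\tfrac{m_2}{m_1}D\Bigr)+\epsilon m_2\,\widetilde f^{*}\Bigl(\phi^{*}N-\tfrac1rE_N\Bigr),
\]
with $(\widetilde X,\bar\Delta)$ klt and $\kappa(K_{\widetilde X}+\bar\Delta)=\kappa(K_{\widetilde X}+\widetilde\Delta)$.

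The direct image of the first summand carries a semipositive metric by Variant~\ref{variant3}. The second summand is a multiple of the pullback of an ample class from $Z$, growing with $m_2$. It therefore swallows the twist $cA_Z$ of Proposition~\ref{caoproposition}, since $c$ depends only on $Z$ and $A_Z$.

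This also changes which sheaf must be flat on $\widetilde Y_Z$. It is no longer $f_{*}\mathcal{O}_X(m(K_{X/Y}+\Delta))$, whose determinant you showed has degree $0$ on the fiber. It is $\widetilde f_{*}\mathcal{O}_{\widetilde X}\bigl(m_1(K_{\widetilde X/\widetilde Y}+\widetilde\Delta+\frac{m_2}{m_1}D)\bigr)$, whose determinant may have positive degree there. Hence the paper's dichotomy:
\begin{enumerate}
\item If that first Chern class is nonzero on $\widetilde Y_Z$, the determinant of the total direct image is big, and you are back in the big case.
\item If it is zero, Proposition~\ref{caoprop2} gives Hermitian flatness on $\widetilde Y_Z$. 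The extension, now with the twist absorbed and combined with \cite{Kawamatacurves}, concludes.
\end{enumerate}
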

\begin{proof}
It remains to prove that every section of $m(K_{F} + \Delta_{|_{F}})$ extends. We divide the proof of the desired result into the following two cases. If $$\kappa(\det f_{*}\mathcal{O}_X\bigl(m(K_{X/Y} + \Delta)\bigr)) =3.$$ Then it is known that the Conjecture holds, for instance by \cite{Caopaun}. If
\[
\kappa\!\left(\det f_{*}\mathcal{O}_X\bigl(m(K_{X/Y}+\Delta)\bigr)\right)=2.
\]
Then, for sufficiently large \(r\), the linear system \(|rN|\) defines a rational map
\[
g \colon Y \dashrightarrow Z
\]
to a rational surface \(Z\), where
\[
N := \det f_{*}\mathcal{O}_X\bigl(m(K_{X/Y}+\Delta)\bigr).
\]
Resolving the indeterminacies of \(g\), we obtain a birational morphism
\[
\phi \colon \widetilde{Y} \to Y
\]
and a morphism
\[
\widetilde{g} \colon \widetilde{Y} \to Z
\]
defined by the linear system \(|\phi^{*}(rN) - E_N|\), where \(E_N\) is an effective divisor supported on the fixed part and the \(\phi\)-exceptional locus. Therefore, we obtain the following commutative diagram:
\[
\begin{tikzcd}
\widetilde{X} \arrow[r, "\psi"] \arrow[d, "\widetilde{f}"] & X \arrow[d, "f"] \\
\widetilde{Y} \arrow[r, "\phi"] \arrow[rd, "\widetilde{g}"] & Y \arrow[d, dashed, "g"] \\
 & Z
\end{tikzcd}
\]
Here \(\widetilde{X}\) is a resolution of the fiber product \(X \times_{Y} \widetilde{Y}\), and \(\psi\) is a birational morphism.

\medskip
Now applying \cite{Caopaun}, we obtain that, for some sufficiently small \(\epsilon > 0\),
\[
K_{X/Y} + \Delta + E - \epsilon f^{*}N
\]
is pseudo-effective, where \( E \) is a divisor on \( X \) with
\(\mathrm{codim}_Y f_{*}(E) \geq 2\). Consequently,
\[
D:=\psi^{*}\bigl(K_{X/Y} + \Delta + E - \epsilon f^{*}N\bigr)
\]
is pseudo-effective. We write this pseudo-effective divisor in the following way
\[
\psi^{*}\bigl(K_{X/Y} + \Delta + E - \epsilon f^{*}N\bigr)
\]
\[
= \psi^{*}\bigl(K_{X/Y} + \Delta + E) - \frac{\epsilon}{r}\widetilde{f}^{*}E_N -\epsilon \widetilde{f}^{*}(\phi^{*}N -\frac{1}{r}E_N)
\]
Now we write
\[
K_{\widetilde{Y}} = \phi^{*}K_Y + E_Y
\]
and
\[
K_{\widetilde{X}} + \widetilde{\Delta} = \psi^{*}(K_X + \Delta),
\]
where $(\widetilde{X}, \widetilde{\Delta})$ is a klt pair. In the end, we obtain
\[
D = K_{\widetilde{X}/\widetilde{Y}} + \widetilde{\Delta} + \widetilde{f}^{*}E_Y + \psi^{*}E
- \frac{\epsilon}{r}\widetilde{f}^{*}E_N
- \epsilon\,\widetilde{f}^{*}\!\left(\phi^{*}N - \frac{1}{r}E_N\right),
\]
which is pseudo-effective.

Using an argument inspired by \cite{Caolitaka2}, adapted to our situation, we take $m_1 \gg m_2$, where $m_1$ and $m_2$ are sufficiently large, and write
\[
(m_1 + m_2)(K_{\widetilde{X}/\widetilde{Y}} + \widetilde{\Delta})
= m_1\!\left(K_{\widetilde{X}/\widetilde{Y}} + \widetilde{\Delta} + \frac{m_2}{m_1}D\right)
\]
\[
\quad + \epsilon m_2 \widetilde{f}^{*}\!\left(\phi^{*}N - \frac{1}{r}E_N\right)
- m_2 \psi^{*}E
- m_2 \widetilde{f}^{*}E_Y
+ \frac{m_2}{r}\epsilon \widetilde{f}^{*}E_N.
\]

Therefore,
\[
(m_1 + m_2)\!\left(K_{\widetilde{X}/\widetilde{Y}} + \widetilde{\Delta}
+ \frac{m_2}{m_1 + m_2}\psi^{*}E
+ \frac{m_2}{m_1 + m_2}\widetilde{f}^{*}E_Y
- \frac{m_2}{r(m_1 + m_2)}\epsilon \widetilde{f}^{*}E_N\right)
\]
\[
= m_1\!\left(K_{\widetilde{X}/\widetilde{Y}} + \widetilde{\Delta} + \frac{m_2}{m_1}D\right)
+ \epsilon m_2 \widetilde{f}^{*}\!\left(\phi^{*}N - \frac{1}{r}E_N\right).
\]

Set
\[
\bar{\Delta}
= \widetilde{\Delta}
+ \frac{m_2}{m_1 + m_2}\psi^{*}E
+ \frac{m_2}{m_1 + m_2}\widetilde{f}^{*}E_Y
- \frac{m_2}{r(m_1 + m_2)}\epsilon \widetilde{f}^{*}E_N.
\]

Since the support of $E_N$ is contained in the support of $E_Y$, and by taking $m_1$ sufficiently large, the pair $(\widetilde{X}, \bar{\Delta})$ is klt.  Therefore,
\[
\widetilde{f}_{*}\mathcal{O}_{\widetilde{X}}\!\left((m_1 + m_2)(K_{\widetilde{X}/\widetilde{Y}} + \bar{\Delta})\right)
= \widetilde{f}_{*}\mathcal{O}_{\widetilde{X}}\!\left(m_1\!\left(K_{\widetilde{X}/\widetilde{Y}} + \widetilde{\Delta} + \frac{m_2}{m_1}D\right)\right)
\]
\[
\otimes \mathcal{O}_{\widetilde{Y}}\!\left(\epsilon m_2\!\left(\phi^{*}N - \frac{1}{r}E_N\right)\right).
\]
We discuss two cases:
\begin{itemize}
    \item[a)] $c_1\!\left(\det \widetilde{f}_{*}\mathcal{O}_{\widetilde{X}}\!\left(m_1\!\left(K_{\widetilde{X}/\widetilde{Y}} + \widetilde{\Delta} + \frac{m_2}{m_1}D\right)\right)|_{\widetilde{Y}_Z}\right) \neq 0$. 
    In this case, it is clear that
    \[
    \det \widetilde{f}_{*}\mathcal{O}_{\widetilde{X}}\!\left((m_1 + m_2)(K_{\widetilde{X}/\widetilde{Y}} + \bar{\Delta})\right)
    \]
    is big. Therefore,
    \[
    \kappa(K_{\widetilde{X}} + \bar{\Delta})
    = \kappa\!\left(K_{\widetilde{F}} + \bar{\Delta}|_{{\widetilde{F}}}\right) + 3.
    \]
    However, it is not difficult to see that
    \[
    \kappa(K_{\widetilde{X}} + \bar{\Delta})
    = \kappa(K_{\widetilde{X}} + \widetilde{\Delta})
    \]
    and
    \[
    \kappa\!\left(K_{\widetilde{F}} + \bar{\Delta}_{|_{\widetilde{F}}}\right)
    \geq \kappa\!\left(K_{\widetilde{F}} + \widetilde{\Delta}|_{\widetilde{F}}\right).
    \]
    Consequently, we conclude that
    \[
    \kappa(K_{\widetilde{X}} + \widetilde{\Delta})
    \geq \kappa\!\left(K_{\widetilde{F}} + \widetilde{\Delta}|_{\widetilde{F}}\right) + 3.
    \]
    \item[b)] 
$c_1\!\left(\det \widetilde{f}_{*}\mathcal{O}_{\widetilde{X}}\!\left(m_1\!\left(K_{\widetilde{X}/\widetilde{Y}} + \widetilde{\Delta} + \frac{m_2}{m_1}D\right)\right)|_{\widetilde{Y}_Z}\right) = 0$.
By applying Proposition~\ref{caoprop2}, we deduce that
\[
\widetilde{f}_{*}\mathcal{O}_{\widetilde{X}}\!\left(m_1\!\left(K_{\widetilde{X}/\widetilde{Y}} + \widetilde{\Delta} + \frac{m_2}{m_1}D\right)\right)|_{\widetilde{Y}_Z}
\]
is Hermitian flat. Therefore, by applying \cite{Kawamatacurves} and Proposition~\ref{caoproposition} and taking
$H = (m-1)K_Y$, we obtain the desired result:
\[
\kappa(K_{\widetilde{X}} + \widetilde{\Delta})
\geq \kappa\!\left(K_{\widetilde{F}} + \widetilde{\Delta}|_{\widetilde{F}}\right).
\]
\end{itemize}
\end{proof}
\begin{proposition}\label{propoovercy}
Let $(X,\Delta)$ be a klt pair. Let $f \colon X \to Y$ be an algebraic fiber space, where $X$ is a smooth projective variety of dimension $n$ and $Y$ is a Calabi--Yau threefold. Let $F$ be a general fiber of $f$. In the setting of the proof of Theorem~\ref{overCY}, assume that
\[
\kappa\!\left(\det f_{*}\mathcal{O}_X\bigl(m(K_{X/Y} + \Delta)\bigr)\right)=1,
\]
and
\[
\kappa\!\left(
\det \widetilde{f}_{*}\mathcal{O}_{\widetilde{X}}
\!\left(
m_1\!\left(K_{\widetilde{X}/\widetilde{Y}} + \widetilde{\Delta} + \frac{m_2}{m_1}D\right)
\right)\Big|_{\widetilde{Y}_Z}
\right) \geq 1.
\]
Then
\[
\kappa(K_X + \Delta) \geq \kappa(K_F + \Delta|_{F}).
\]
\end{proposition}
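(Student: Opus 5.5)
We reuse the construction from the proof of Theorem~\ref{overCY}, now with $N:=\det f_{*}\mathcal{O}_X\bigl(m(K_{X/Y}+\Delta)\bigr)$ of Iitaka dimension one. For $r$ sufficiently large, $|rN|$ defines a rational map $g\colon Y\dashrightarrow Z$ onto a curve. Since $q(Y)=0$, we have $Z\simeq\mathbb{P}^1$. After resolving indeterminacies we get $\phi\colon\widetilde{Y}\to Y$, a fibration $\widetilde{g}\colon\widetilde{Y}\to Z$ with general fiber a surface $\widetilde{Y}_Z$, and the decomposition $\phi^{*}(rN)=\widetilde{g}^{*}\mathcal{O}_Z(a)+E_N$ with $a>0$. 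As in that proof, \cite{Caopaun} gives that $D$ is pseudo-effective, and for $m_1\gg m_2$ we obtain the klt boundary $\bar{\Delta}$ and the identity
\[
\widetilde{f}_{*}\mathcal{O}_{\widetilde{X}}\bigl((m_1+m_2)(K_{\widetilde{X}/\widetilde{Y}}+\bar{\Delta})\bigr)
=\widetilde{f}_{*}\mathcal{O}_{\widetilde{X}}\Bigl(m_1\bigl(K_{\widetilde{X}/\widetilde{Y}}+\widetilde{\Delta}+\tfrac{m_2}{m_1}D\bigr)\Bigr)\otimes\mathcal{O}_{\widetilde{Y}}\bigl(\epsilon m_2\,\widetilde{g}^{*}\mathcal{O}_Z(a/r)\bigr).
\]
Write $N_1$ for the determinant of the first factor on the right. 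By Variant~\ref{variant3} it carries a semi-positively curved metric, and by hypothesis $\kappa(N_1|_{\widetilde{Y}_Z})\ge 1$.

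If $\kappa(N_1|_{\widetilde{Y}_Z})=2$, then $N_1|_{\widetilde{Y}_Z}$ is big. Adding the positive pull-back $\widetilde{g}^{*}\mathcal{O}_Z(\cdot)$ with rank-many copies makes $\det\widetilde{f}_{*}\mathcal{O}_{\widetilde{X}}\bigl((m_1+m_2)(K_{\widetilde{X}/\widetilde{Y}}+\bar{\Delta})\bigr)$ big on $\widetilde{Y}$. We then argue exactly as in case a) of Theorem~\ref{overCY}: via \cite{Caopaun} and Theorem~\ref{schnellthm} we get $\kappa(K_{\widetilde{X}}+\bar{\Delta})=\kappa(K_{\widetilde F}+\bar{\Delta}|_{\widetilde F})+3$. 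Then we compare $\bar{\Delta}$ with $\widetilde{\Delta}$ as there, the corrections by $\psi^{*}E$, $E_Y$, $E_N$ being exceptional or having image of codimension at least two.

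If $\kappa(N_1|_{\widetilde{Y}_Z})=1$, we mimic Case~2 of Theorem~\ref{kodairadimension1}. For $p\gg0$ and $A$ sufficiently ample on $Z$, the system $|pN_1+\widetilde{g}^{*}A|$ has Iitaka dimension $2$. After a further resolution it defines a fibration $\widetilde{Y}\to M$ over a surface, factoring $\widetilde{g}$, whose general fiber $\widetilde{Y}_M$ is a curve on which $N_1$ is numerically trivial. Applying Proposition~\ref{caoprop2}, the pushforward restricted to $\widetilde{Y}_M$ is Hermitian flat with smooth metric. Proposition~\ref{caoproposition}, applied to $\widetilde{Y}\to M$ with $H$ chosen as in case b), then extends every section on $\widetilde{X}_M$ after twisting by a pull-back of an ample divisor on $M$. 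Combined with the positive twist by $\widetilde{g}^{*}\mathcal{O}_Z$, this gives $\kappa(K_{\widetilde{X}}+\bar{\Delta})\ge\kappa(K_{\widetilde{X}_M}+\bar{\Delta}|_{\widetilde{X}_M})$. We then apply \cite{Kawamatacurves} to $\widetilde{X}_M\to\widetilde{Y}_M$ (a fiber space over a curve) to get $\kappa(K_{\widetilde{X}_M}+\bar{\Delta}|)\ge\kappa(K_{\widetilde F}+\bar{\Delta}|_{\widetilde F})$. Finally we pass back from $\bar{\Delta}$ to $\widetilde{\Delta}$ and from $\widetilde{X}$ to $X$ using $K_{\widetilde{X}}+\widetilde{\Delta}=\psi^{*}(K_X+\Delta)$.

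The main obstacle is the last subcase. There the fiber $\widetilde{Y}_M$ may have arbitrary genus (it need not be elliptic, since $Y$ is Calabi--Yau and not fibered by the canonical map), so we must check two things. First, that Proposition~\ref{caoproposition} still applies with the $\omega_{\widetilde Y}$ twist, which is absorbed by taking $H$ to compensate $E_Y$. Second, that the $L^2$ condition holds via smoothness of $h$. A further subtlety is the bookkeeping showing that $\bar{\Delta}$ versus $\widetilde{\Delta}$ does not lower Kodaira dimensions. Here the negative $E_N$ term must be absorbed by the exceptional $E_Y$, which requires $\operatorname{Supp}E_N\subseteq\operatorname{Supp}E_Y$. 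When $|rN|$ has a fixed divisorial part this inclusion fails, and I would first try replacing $N$ by its mobile part.
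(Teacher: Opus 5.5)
Your outline follows the paper's route: the same split according to $\kappa(N_1|_{\widetilde Y_Z})$, bigness of $\det\widetilde f_*\mathcal{O}_{\widetilde X}((m_1+m_2)(K_{\widetilde X/\widetilde Y}+\bar\Delta))$ in the first subcase, and in the second the map to a surface $M$, Proposition~\ref{caoprop2} on the curves $\widetilde Y_M$, then Proposition~\ref{caoproposition}. Your caveats are sound. Ellipticity of $\widetilde Y_M$ is not automatic here. $\widetilde Y_M$ is not rational because $Y$ is not uniruled, so \cite{Kawamatacurves} still applies. Replacing $N$ by its mobile part is a legitimate way to get $\operatorname{Supp}E_N\subseteq\operatorname{Supp}E_Y$, since $K_{X/Y}+\Delta+E-\epsilon f^*N_{\mathrm{mob}}$ remains pseudo-effective.

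There is, however, a genuine gap at the step ``combined with the positive twist by $\widetilde g^*\mathcal{O}_Z$''. Proposition~\ref{caoproposition} applied to $\pi\colon\widetilde Y\to M$ gives sections of $m(K_{\widetilde X}+\bar\Delta)+c\,(\pi\circ\widetilde f)^*A_M$, with $A_M$ ample on the \emph{surface} $M$. To say anything about $\kappa(K_{\widetilde X}+\bar\Delta)$ you must absorb this twist. The only positivity you have put into $K_{\widetilde X}+\bar\Delta$ is $\epsilon m_2\,\widetilde f^*\widetilde g^*\mathcal{O}_Z(a/r)$. Writing $q\colon M\to Z$, we have $\widetilde g^*\mathcal{O}_Z(1)=\pi^*q^*\mathcal{O}_Z(1)$, which is trivial on $\widetilde Y_Z=\pi^{-1}(M_z)$, whereas $A_M\cdot M_z>0$ for the fibre curves $M_z=q^{-1}(z)$. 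So no multiple of the $Z$-twist dominates $c\,\pi^*A_M$. Without absorption you only control $\kappa(K_{\widetilde X}+\bar\Delta+\delta(\pi\circ\widetilde f)^*A_M)$, which says nothing about $\kappa(K_{\widetilde X}+\bar\Delta)$.

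The missing positivity along the fibres of $q$ is exactly what $N_1$ carries: its Iitaka map on $\widetilde Y_Z$ is $\widetilde Y_Z\to M_z$. It reaches the canonical divisor only through a second application of \cite{Caopaun} to the new pair. Set $N_{\mathrm{tot}}:=\det\widetilde f_*\mathcal{O}_{\widetilde X}((m_1+m_2)(K_{\widetilde X/\widetilde Y}+\bar\Delta))$, which has Iitaka dimension $2$ with Iitaka map $\widetilde Y\dashrightarrow M$. Then $K_{\widetilde X/\widetilde Y}+\bar\Delta+E'-\epsilon'\widetilde f^*N_{\mathrm{tot}}$ is pseudo-effective. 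A second $m_1'\gg m_2'$ twist then produces a boundary whose pushforward contains $\widetilde f^*\pi^*(\text{ample on }M)$. At that point you are in the $\kappa=2$ situation of Theorem~\ref{overCY}, and you must rerun its dichotomy a)/b) on $\widetilde Y_M$, because the flatness you proved concerns the first-level pushforward, not the re-twisted one. The paper's proof is terse here too, but its phrasing through the Iitaka-dimension-$2$ determinant is set up for this reduction; the $Z$-twist alone is not enough.

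Relatedly, $H$ in Proposition~\ref{caoproposition} must be pseudo-effective, so it cannot ``compensate'' $E_Y$. Take $H=(m-1)K_{\widetilde Y}=(m-1)E_Y$ as in case b). The $L^2$ condition then forces vanishing along $E_Y\cap\widetilde Y_M$, which is nonempty precisely in the non-elliptic case you worry about.
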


\begin{proof}
Since
\[
\kappa\!\left(\det f_{*}\mathcal{O}_X\bigl(m(K_{X/Y} + \Delta)\bigr)\right)=1,
\]
the linear system
\[
\bigl|r\,\det f_{*}\mathcal{O}_X\bigl(m(K_{X/Y} + \Delta)\bigr)\bigr|
\]
defines a rational map to a curve $Z$, which is isomorphic to $\mathbb{P}^1$. As in Theorem \ref{overCY}, we obtain the following diagram:
\[
\begin{tikzcd}
\widetilde{X} \arrow[r, "\psi"] \arrow[d, "\widetilde{f}"] & X \arrow[d, "f"] \\
\widetilde{Y} \arrow[r, "\phi"] \arrow[rd, "\widetilde{g}"] & Y \arrow[d, dashed, "g"] \\
 & Z
\end{tikzcd}
\]

\begin{itemize}
\item[1)] If
\[
\kappa\!\left(
\det \widetilde{f}_{*}\mathcal{O}_{\widetilde{X}}
\!\left(
m_1\!\left(K_{\widetilde{X}/\widetilde{Y}} + \widetilde{\Delta} + \frac{m_2}{m_1}D\right)
\right)\Big|_{\widetilde{Y}_Z}
\right)=2,
\]
then
\[
\det \widetilde{f}_{*}\mathcal{O}_{\widetilde{X}}
\!\left((m_1+m_2)\bigl(K_{\widetilde{X}/\widetilde{Y}}+\overline{\Delta}\bigr)\right)
\]
is big. Hence, the result follows.

\item[2)] If
\[
\kappa\!\left(
\det \widetilde{f}_{*}\mathcal{O}_{\widetilde{X}}
\!\left(
m_1\!\left(K_{\widetilde{X}/\widetilde{Y}} + \widetilde{\Delta} + \frac{m_2}{m_1}D\right)
\right)\Big|_{\widetilde{Y}_Z}
\right)=1,
\]
then
\[
\kappa\!\left(
\det \widetilde{f}_{*}\mathcal{O}_{\widetilde{X}}
\!\left((m_1+m_2)\bigl(K_{\widetilde{X}/\widetilde{Y}}+\overline{\Delta}\bigr)\right)
\right)=2.
\]
Thus, the linear system
\[
\bigl|r\,\det \widetilde{f}_{*}\mathcal{O}_{\widetilde{X}}
\!\left((m_1+m_2)\bigl(K_{\widetilde{X}/\widetilde{Y}}+\overline{\Delta}\bigr)\right)\bigr|
\]
defines a rational map whose image is a projective surface $M$, and which factorizes the morphism $\widetilde{g}$. The general fiber $\widetilde{Y}_M$ is an elliptic curve (after resolving the rational map). Moreover,
\[
i\Theta_{\det \widetilde{h}}\!\left(
\det \widetilde{f}_{*}\mathcal{O}_{\widetilde{X}}
\!\left((m_1+m_2)\bigl(K_{\widetilde{X}/\widetilde{Y}}+\overline{\Delta}\bigr)\right)
\right)\Big|_{\widetilde{Y}_M} \equiv 0.
\]
By Proposition~\ref{caoprop2}, 
\[
\widetilde{f}_{*}\mathcal{O}_{\widetilde{X}}
\!\left((m_1+m_2)\bigl(K_{\widetilde{X}/\widetilde{Y}}+\overline{\Delta}\bigr)\right)\Big|_{\widetilde{Y}_M}
\]
is a Hermitian flat vector bundle with smooth induced metric $\widetilde{h}$. Therefore, applying Proposition~\ref{caoproposition}, we deduce
\[
\kappa(K_{\widetilde{X}} + \widetilde{\Delta})
= \kappa(K_{\widetilde{X}} + \overline{\Delta})
\geq \kappa(K_{\widetilde{F}} + \overline{\Delta}|_{\widetilde{F}}) + 1
\geq \kappa(K_{\widetilde{F}} + \widetilde{\Delta}|_{\widetilde{F}}) + 1.
\]
\end{itemize}
\end{proof}
\begin{remark}\label{remark1overcy}
At this point, we are not able to omit the condition
\[
\kappa\!\left(
\det \widetilde{f}_{*}\mathcal{O}_{\widetilde{X}}
\!\left(
m_1\!\left(K_{\widetilde{X}/\widetilde{Y}} + \widetilde{\Delta} + \frac{m_2}{m_1}D\right)
\right)\Big|_{\widetilde{Y}_Z}
\right) \geq 1
\]
in Proposition~\ref{propoovercy}. We can only provide some evidence in the case where
\[
\kappa\!\left(
\det \widetilde{f}_{*}\mathcal{O}_{\widetilde{X}}
\!\left(
m_1\!\left(K_{\widetilde{X}/\widetilde{Y}} + \widetilde{\Delta} + \frac{m_2}{m_1}D\right)
\right)\Big|_{\widetilde{Y}_Z}
\right) < 1,
\]
for example, when
\[
\det \widetilde{f}_{*}\mathcal{O}_{\widetilde{X}}
\!\left(
m_1\!\left(K_{\widetilde{X}/\widetilde{Y}} + \widetilde{\Delta} + \frac{m_2}{m_1}D\right)
\right)\Big|_{\widetilde{Y}_Z}
\]
is trivial or a non-torsion element in $\operatorname{Pic}^{0}(\widetilde{Y}_Z)$. In these cases, the Iitaka conjecture holds.
\end{remark}
\begin{remark}\label{remarkovercy2}
 If $\kappa\bigl(\det f_{*}\mathcal{O}_X\bigl(m(K_{X/Y} + \Delta)\bigr)\bigr) = 0$, then 
\[
\det f_{*}\mathcal{O}_X\bigl(m(K_{X/Y} + \Delta)\bigr) \equiv 0
\]
outside finitely many divisors. Hence,
\[
f_{*}\mathcal{O}_X\bigl(m(K_{X/Y} + \Delta)\bigr)
\]
is a Hermitian flat torsion free sheaf outside finitely many divisors. Denote
\[
\mathcal{C}:=\{T >0, \hspace{0.2cm} T\in H^{0}(Y, \det f_{*}\mathcal{O}_X\bigl(m(K_{X/Y} + \Delta)\bigr)\bigr)\}
\]
If $\mathcal{C} = \emptyset$, then applying Theorem~\ref{fujitadecom}, we deduce that
\[
f_{*}\mathcal{O}_X\bigl(m(K_{X/Y} + \Delta)\bigr)
\]
is a Hermitian flat vector bundle, which is in fact trivial since $Y$ is a Calabi--Yau threefold. Consequently 
\[
\kappa(X) = \kappa(F).
\]
\end{remark}
\subsection*{Conclusion}
In this section, we have completed the proof of the Iitaka conjecture over Calabi--Yau threefolds in the case where
\[
\kappa\!\left(\det f_{*}\mathcal{O}_X\bigl(m(K_{X/Y} + \Delta)\bigr)\right) > 1,
\]
see Theorem~\ref{overCY}. We have also obtained partial results in the cases
\[
\kappa\!\left(\det f_{*}\mathcal{O}_X\bigl(m(K_{X/Y} + \Delta)\bigr)\right) = 0,1,
\]
see Proposition~\ref{propoovercy} and Remarks~\ref{remark1overcy} and~\ref{remarkovercy2}.

For such fibrations, we conjecture that
\[
\det f_{*}\mathcal{O}_X\bigl(m(K_{X/Y}+\Delta)\bigr)
\]
is $\mathbb{Q}$-effective. Even if we assume that
\[
\det f_{*}\mathcal{O}_X\bigl(m(K_{X/Y}+\Delta)\bigr)
\]
is nef, it is not clear that it is $\mathbb{Q}$-effective, since this would follow from the abundance conjecture for Calabi--Yau threefolds. However, if we assume that
\[
\det f_{*}\mathcal{O}_X\bigl(m(K_{X/Y}+\Delta)\bigr)
\]
is nef and satisfies
\[
\det f_{*}\mathcal{O}_X\bigl(m(K_{X/Y}+\Delta)\bigr)\cdot c_2(Y) > 0,
\]
or in particular for the case that the Calabi--Yau threefold $Y$ satisfies $c_2(Y) > 0$, then by applying the Riemann--Roch theorem for threefolds we can deduce that
\[
\det f_{*}\mathcal{O}_X\bigl(m(K_{X/Y}+\Delta)\bigr)
\]
is effective.

Indeed, the case
\[
\nu\!\left(\det f_{*}\mathcal{O}_X\bigl(m(K_{X/Y}+\Delta)\bigr)\right)=2
\]
follows from  Kawamata--Viehweg vanishing theorem, in this situation, the divisor
\[
\det f_{*}\mathcal{O}_X\bigl(m(K_{X/Y}+\Delta)\bigr)
\]
defines an elliptic fibration whose base is a rational surface with only quotient singularities.

The case
\[
\nu\!\left(\det f_{*}\mathcal{O}_X\bigl(m(K_{X/Y}+\Delta)\bigr)\right)=1
\]
requires additional work and is treated in~\cite{oguisok},  the divisor
\[
\det f_{*}\mathcal{O}_X\bigl(m(K_{X/Y}+\Delta)\bigr)
\]
induces a fibration of the Calabi--Yau threefold $Y$ whose general fiber is a K3 surface.

Note that when
\[
\nu\!\left(\det f_{*}\mathcal{O}_X\bigl(m(K_{X/Y}+\Delta)\bigr)\right)=0,
\]
the divisor
\[
\det f_{*}\mathcal{O}_X\bigl(m(K_{X/Y}+\Delta)\bigr)
\]
is trivial, while in the case
\[
\nu\!\left(\det f_{*}\mathcal{O}_X\bigl(m(K_{X/Y}+\Delta)\bigr)\right)=3,
\]
it is big.
\begin{remark}
The condition
\[
\det f_{*}\mathcal{O}_X\bigl(m(K_{X/Y}+\Delta)\bigr)\cdot c_2(Y) > 0
\]
is quite strong. If this condition is not assumed, there are results proving the effectivity of
\[
\det f_{*}\mathcal{O}_X\bigl(m(K_{X/Y}+\Delta)\bigr)
\]
in special situations where
\[
\nu\!\left(\det f_{*}\mathcal{O}_X\bigl(m(K_{X/Y}+\Delta)\bigr)\right)=2.
\]
Indeed, if we assume that $c_3(Y)\neq 0$ and that there exists an ample divisor $H$ on $Y$ such that the restriction
\[
\det f_{*}\mathcal{O}_X\bigl(m(K_{X/Y}+\Delta)\bigr)\big|_G
\]
is ample for a general member $G\in |H|$, then
\[
\det f_{*}\mathcal{O}_X\bigl(m(K_{X/Y}+\Delta)\bigr)
\]
is semi-ample. This result is proved in~\cite{svaldiliu}.
\end{remark}
\bibliographystyle{plain}
\bibliography{slope_paper}
\end{document}